\theoremstyle{plain}
\newtheorem{theorem}{Theorem}[section]
\newtheorem{proposition}[theorem]{Proposition}
\newtheorem{lemma}[theorem]{Lemma}
\theoremstyle{definition}
\newtheorem{definition}[theorem]{Definition}
\newtheorem{example}[theorem]{Example}
\newtheorem{observation}[theorem]{Observation}
\begin{document}
\title[]{INVERSION OF DOUBLE FOURIER INTEGRAL OF NON-LEBESGUE
INTEGRABLE BOUNDED VARIATION FUNCTIONS}

\author[E. Torres-Teutle, M. G. Morales-Mac\'ias and F. J. Mendoza-Torres]{
Edgar Torres-Teutle$^{1}$, Francisco J. Mendoza-Torres$^{1}$ \\ and Mar\'ia G. Morales-Mac\'ias$^{1,*}$}
\maketitle
\begin{center}
\address{\textbf{$^{1}$} Facultad de Ciencias F\'isico Matem\'aticas, Benem\'erita Universidad Aut\'onoma de Puebla, Av. San Claudio and 18 Sur, Puebla, 72570, Mexico.}   
\end{center}

\address{$^{*}$Corresponding author: \texttt{maciam@math.muni.cz}}  

\maketitle

\begin{abstract}
This work proves pointwise convergence of the truncated Fourier double integral of non-Lebesgue integrable bounded variation functions. This leads to the Dirichlet-Jordan theorem proof for non-Lebesgue integrable functions, which has not been sufficiently studied. Note that recent contributions regarding this subject consider Lebesgue integrable functions, [F. M\'oricz, 2015], [B. Ghodadra-V. F$\ddot{\text{u}}$lop, 2016].\\

\noindent\textit{Keywords}: Dirichlet-Jordan theorem; KP-Fourier transform; Double Fourier integral; Bounded variation over $\mathbb{R}^2$; Improper Riemann-Stieltjes integral; Pointwise convergence.\\

\noindent\textit{MSC(2010)}: 43A50; 26A39; 42B10; 26B30. 
    
\end{abstract}

\maketitle

\section{Introduction}

One of the most relevant and significant subjects in the Fourier analysis
theory is the inversion problem. This means, given the Fourier transform $%
\hat{f}$ of a function $f$ on $\mathbb{R}^{n}$, provides conditions such that
the function 
\begin{equation*}
C\int_{\mathbb{R}^{n}}\hat{f}(\omega )e^{i<\omega ,\overline{x}>}d\omega ,\;\;\overline{x}\in 
\mathbb{R}^{n},
\end{equation*}%
approximates to $f(\overline{x})$,
where $C$ is a normalization constant and $<,>$ is the Euclidean inner
product.

The Dirichlet-Jordan Theorem solves the pointwise inversion problem.
For $n=1$, this states that if $\ f\in L^{1}(\mathbb{R})\cap BV(\mathbb{R})$ then, for
each $x\in \mathbb{R}$,%
\begin{equation}\label{CDJ}
\lim_{M\rightarrow \infty }\frac{1}{2\pi }\int_{-M}^{M}\hat{f}(\omega )e^{ix\omega }d\omega=\frac{f(x+)+f(x-)}{2}.
\end{equation}%
The integral at left side of (\ref{CDJ}) is called the truncated Fourier integral, also known as the Dirichlet integral of $f$.
 In \cite[Corollary 3]{Moricz1}, F. M\'oricz proved that if $f\in L^{1}(\mathbb{R})\cap BV(\mathbb{R})$, then the convergence (\ref{CDJ})
is locally uniform at every point of continuity of $f$.

For the case $n=2$, considering the classical Lebesgue integral
theory, in \cite{Moricz2}, F. M\'oricz proved locally uniform convergence of the truncated double Fourier integral 
\begin{equation*}
\frac{1}{%
4\pi ^{2}}\int_{|\xi |\leq u}\int_{|\eta|\leq v}
\hat{f}(\xi ,\eta )e^{i(\xi x+\eta y)}d(\xi ,\eta ),
\end{equation*}%
to $f\left( x,y\right) ,$ as $u,v\rightarrow \infty $, under the
conditions: $f\in L^{1}(\mathbb{R}^{2})\cap BV_{H}(\mathbb{R}^{2})$,  
\begin{equation}
\hat{f}\in L^{1}((\mathbb{R})\times \lbrack -\delta ,\delta ])\cup ([-\delta
,\delta ]\times \mathbb{R}),\;\;\delta >0,  \tag{C1}
\label{condition integrability FT}
\end{equation}%
and $(x,y)\in\mathbb{R}^{2}$ a point of continuity of $f$. The set of bounded variation functions in the sense of
Hardy over $\mathbb{R}^{2}$ is denoted as $BV_{H}(\mathbb{R}^{2})$.  In \cite{Ghodadra}, 
it is proved that the integrability condition \eqref{condition integrability
FT} about $\hat{f}$ can be omitted to get locally uniform convergence  of the
truncated double Fourier integral.

We have that
\begin{equation}
L^{1}(\mathbb{R}^{2})\cap BV_{H}(\mathbb{R}^{2})\subsetneq BV_{||0||}(%
\mathbb{R}^{2})\nsubseteq L^{1}(\mathbb{R}^{2}),  \label{no contension}
\end{equation}%
where $BV_{||0||}(\mathbb{R}^{2})$ denotes the set of bounded variation
functions in the sense of Vitali that vanish when the norm of their
arguments tends to infinity. Thus, previous results and expression \eqref{no
contension} motivate us to consider the set $BV_{||0||}(\mathbb{R}^{2})$ to
study the inversion problem. 

The relations in  \eqref{no contension} presupposes the use of integrals other than the Lebesgue one. Here, we consider locally
Kurzweil-Henstock integrable functions over $\mathbb{R}^{2}$. Thus, we
will show that if $f\in BV_{||0||}(\mathbb{R}^{2})$ and $(\xi ,\eta )\in 
\mathbb{R}^{2},$ where $\xi \neq 0$ and $\eta \neq 0$, then the map 
\begin{equation*}
(\xi ,\eta )\longrightarrow \lim\limits
_{\substack{ a,c\rightarrow -\infty  \\ b,d\rightarrow \infty }}%
\iint_{[a,b]\times \lbrack c,d]}f(t_{1},t_{2})e^{-i(\xi t_{1}+\eta
t_{2})}d(t_{1},t_{2})  \label{limite}
\end{equation*}%
is well defined. We call this limit the KP-Fourier transform of $f$ at $%
(\xi ,\eta )$ and denote by $\mathcal{F}(f)(\xi ,\eta )$. Of course, the
KP-Fourier transform is defined in a more general sense than the classical Fourier transform, see Definition \ref%
{Definition P-Fourier}.

One important result in this article is the Dirichlet-Jordan theorem for the KP-Fourier transform. That is, if $f\in BV_{||0||}(\mathbb{R}^{2})$, then, for $x\neq 0$ and $y\neq 0$, 
\begin{equation}\label{tdFi}
\frac{1}{%
4\pi ^{2}}\int_{\alpha _{1}\leq |\xi |\leq \beta _{1}}\int_{\alpha _{2}\leq
|\eta |\leq \beta _{2}}\mathcal{F}(f)(\xi ,\eta )e^{i(\xi x+\eta
y)}d(\xi ,\eta )
\end{equation}%
converges pointwise  to%
\begin{equation*}
\frac{f(x+,y+)+f(x+,y-)+f(x-,y+)+f(x-,y-)}{4},
\end{equation*}%
as $\alpha _{1},\alpha _{2}\rightarrow 0 \text{ and } \beta _{1},\beta _{2}\rightarrow
\infty $. Apparently, in mathematical literature there is no a similar space on which the proof of this theorem has been analyzed.

This article is organized as follows. In Section 2, we present the improper
Riemann-Stieltjes integral definition over $\mathbb{R}^{2}$ and some of its
properties, the concepts of bounded variation in the sense of
Vitali and Hardy. In Section 3,
we recall the Kurzweil-Henstock integral over rectangles. Also, we introduce
the definition of the KP-Fourier transform which was defined in \cite{Mendoza3}, and we provide an alternative proof of its continuity property which was demonstrated in \cite{Mendoza3} and some auxiliary results. In section 4, we present our main
contributions; we prove a version of the Dirichlet-Jordan Theorem of non-Lebesgue integrable bounded variation functions, see Theorem \ref{TDJI}. Moreover, we extend Theorem 1 from \cite{Moricz1} and Theorem 2.1 in \cite{Ghodadra}. This leads us to consider the validity of the locally uniform convergence for the truncated double Fourier integral of functions in $BV_{||0||}(\mathbb{R}^2)$.

\section{Preliminary topics}

Let us recall that a partition of the bounded interval $[a,b]$ is a finite
collection of subintervals $\{[x_{i-1},x_i]:i=1,...,n\},$ where $a=x_0<...<x_n=b$. 
Now, we consider $R=[a,b]\times [c,d]$ a bounded rectangle of $\mathbb{R}^2$. A partition of $R$ is a finite collection of the form $$\{R_{i,j}\}=\{[x_{i-1},x_i]\times[y_{j-1},y_j]\text{ }|\text{ }i=1,..,n\text{ and }j=1,...,m\},$$
where $a=x_0<...<x_n=b\text{ and }c=y_0<...<y_m=d$,
and the set of such partitions is defined by $\mathcal{P}(R)$. Furthermore, we define the norm of $%
P=\{R_{i,j}\}\in \mathcal{P}(R)$ as 
\begin{equation}  \label{ENorm}
||P||=\max\{D_{i,j}\},
\end{equation}
where $D_{i,j}$ is the diagonal length of $R_{i,j}$.

\begin{definition}
Let $f,g:R\rightarrow\mathbb{R}$ be functions. It is said that $g$ is 
\textbf{Riemann-Stieltjes integrable} with respect to $f$ over $R$, if there
exists $A\in \mathbb{R}$ such that for $\varepsilon>0$ there exists $%
\delta_\varepsilon>0$ such that if $P=\{R_{i, j}\}$ with $%
||P||<\delta_\varepsilon$ and $(\eta_{i,j},\tau_{i,j})\in R_{i,j}$, then 
\begin{equation*}
\left|\sum_{i,j} g(\eta_{i,j},\tau_{i,j})\Delta
f(x_i,y_j)-A\right|<\varepsilon,
\end{equation*}
where 
\begin{equation*}
\Delta
f(x_i,y_j):=f(x_i,y_j)-f(x_{i-1},y_j)-f(x_i,y_{j-1})+f(x_{i-1},y_{j-1}).
\end{equation*}
Further, $A$ is the value of the Riemann-Stieltjes integral and is denoted
as 
\begin{equation*}
A=\iint_Rgdf=\iint_R g(t_1,t_1)\,df(t_1,t_2).
\end{equation*}
\end{definition}

\begin{observation}\noindent
\begin{itemize}
\item[\textit{i)}] The norm in $(\ref{ENorm})$ is equivalent to the norm 
 used in \cite{Clarkson} and  \cite{Ghodadra}, hence the integrals defined by each norm are
equal. 

\item[\textit{ii)}] The Riemann-Stieltjes integral of a complex function $g$ with respect
to a real valued function $f$ is the sum of the integrals of the real and
imaginary part of $g$ with respect to $f$.
\end{itemize}
\end{observation}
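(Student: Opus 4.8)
The plan is to dispatch the two items separately; both are elementary consistency checks, so I would keep the write-up short.

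For item \textit{i)}, I would first make explicit the competing notion of norm used in \cite{Clarkson} and \cite{Ghodadra}: there the fineness of a partition $P=\{R_{i,j}\}$ is measured by a quantity $\|P\|^{\ast}$ comparable to the largest edge length occurring among the subrectangles, say $\|P\|^{\ast}=\max\{\max_i(x_i-x_{i-1}),\ \max_j(y_j-y_{j-1})\}$. The only input is the elementary inequality, for a rectangle of side lengths $p,q$,
\begin{equation*}
\max\{p,q\}\le\sqrt{p^{2}+q^{2}}\le\sqrt{2}\,\max\{p,q\},
\end{equation*}
applied with $p=x_i-x_{i-1}$, $q=y_j-y_{j-1}$ and then maximized over $i,j$; this gives $\|P\|^{\ast}\le\|P\|\le\sqrt{2}\,\|P\|^{\ast}$, with constants independent of $P$. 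Hence for any $\delta>0$ one has $\|P\|^{\ast}<\delta/\sqrt{2}\Rightarrow\|P\|<\delta\Rightarrow\|P\|^{\ast}<\delta$. Plugging this into the $\varepsilon$--$\delta$ definition of the Riemann--Stieltjes integral shows that a number $A$ meets the defining condition relative to $\|\cdot\|$ exactly when it meets it relative to $\|\cdot\|^{\ast}$: given $\varepsilon$ and a $\delta$ that works for one norm, $\delta/\sqrt{2}$ works for the other. Therefore the two integrals exist simultaneously and are equal.

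For item \textit{ii)}, I would use only linearity of the difference operator $\Delta$ and of finite sums, together with the hypothesis that $f$ is real-valued. Writing $g=g_{1}+ig_{2}$ with $g_{1}=\operatorname{Re}g$ and $g_{2}=\operatorname{Im}g$, each $\Delta f(x_i,y_j)$ is real, so every tagged Riemann--Stieltjes sum splits as
\begin{equation*}
\sum_{i,j} g(\eta_{i,j},\tau_{i,j})\,\Delta f(x_i,y_j)=\sum_{i,j} g_{1}(\eta_{i,j},\tau_{i,j})\,\Delta f(x_i,y_j)+i\sum_{i,j} g_{2}(\eta_{i,j},\tau_{i,j})\,\Delta f(x_i,y_j).
\end{equation*}
If $\iint_R g_{1}\,df=A_{1}$ and $\iint_R g_{2}\,df=A_{2}$ both exist, the triangle inequality applied to this identity shows the sums for $g$ converge to $A_{1}+iA_{2}$; conversely, if the sums for $g$ converge to some $A$, applying $\operatorname{Re}$ and $\operatorname{Im}$ (which are $1$-Lipschitz on $\mathbb{C}$) shows that $A_{1},A_{2}$ exist and equal $\operatorname{Re}A$, $\operatorname{Im}A$. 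This is precisely the asserted identity $\iint_R g\,df=\iint_R g_{1}\,df+i\iint_R g_{2}\,df$.

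I do not anticipate a real obstacle: the whole content is the two-sided comparison of the diagonal with the edge lengths and the linearity of $\Delta$. The only point needing a touch of care is that the comparison constants in \textit{i)} are absolute (namely $1$ and $\sqrt{2}$) and hence uniform over all partitions, so that rescaling $\delta$ by a fixed factor converts one $\varepsilon$--$\delta$ statement into the other.
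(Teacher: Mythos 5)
Your proposal is correct, and the paper itself offers no proof of this Observation (it is stated as a remark), so your elementary argument---the two-sided comparison $\|P\|^{\ast}\le\|P\|\le\sqrt{2}\,\|P\|^{\ast}$ feeding into the $\varepsilon$--$\delta$ definition, and the splitting of tagged sums via linearity of $\Delta$---is exactly the justification the authors leave implicit. The only point to verify against the cited sources is that the mesh in \cite{Clarkson} and \cite{Ghodadra} is indeed the maximal edge length, as you assumed; granted that, nothing is missing.
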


From \cite[Theorem 5.4.3]{Giselle} for functions of one variable, we obtain
the following result for functions of two variables.

\begin{lemma}
\label{IIIR} Let f be a Riemann integrable function over $R$
and suppose that $g$ is bounded and Riemann-Stieltjes integrable with respect to $h$ over $R$,
where 
\begin{equation*}
h(t_1,t_2)=\iint_{[a,t_1]\times[c,t_2]} f(s_1,s_2)d(s_1,s_2),
\end{equation*}
for each $(t_1,t_2)\in R$.  Then 
\begin{equation*}
\iint_{R}g(t_1,t_2)dh(t_1,t_2) =\iint_{R}g(t_1,t_2)f(t_1,t_2)d(t_1,t_2).
\end{equation*}
\end{lemma}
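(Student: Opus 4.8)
The plan is to reduce the two-dimensional statement to the one-variable result \cite[Theorem 5.4.3]{Giselle} by iterating it in each coordinate. First I would fix a Riemann integrable $f$ over $R=[a,b]\times[c,d]$ and set
\[
h(t_1,t_2)=\iint_{[a,t_1]\times[c,t_2]}f(s_1,s_2)\,d(s_1,s_2),
\]
observing that $\Delta h(x_i,y_j)=\iint_{[x_{i-1},x_i]\times[y_{j-1},y_j]}f$, i.e.\ the Stieltjes increment of $h$ over a subrectangle is precisely the Riemann integral of $f$ over that subrectangle; this is the natural two-dimensional analogue of the fundamental fact that, for a one-variable integral function, $h(x_i)-h(x_{i-1})=\int_{x_{i-1}}^{x_i}f$. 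I would verify this identity directly from the definition of $\Delta h$ and the additivity of the double Riemann integral over a partition of a rectangle into subrectangles.

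Next I would estimate a generic Riemann–Stieltjes sum $S=\sum_{i,j}g(\eta_{i,j},\tau_{i,j})\Delta h(x_i,y_j)$ against the target $\iint_R gf$. Using the increment identity, $S=\sum_{i,j}g(\eta_{i,j},\tau_{i,j})\iint_{R_{i,j}}f\,d(s_1,s_2)$, while $\iint_R gf=\sum_{i,j}\iint_{R_{i,j}}g(s_1,s_2)f(s_1,s_2)\,d(s_1,s_2)$. Hence
\[
\Bigl|S-\iint_R gf\Bigr|\le\sum_{i,j}\iint_{R_{i,j}}\bigl|g(\eta_{i,j},\tau_{i,j})-g(s_1,s_2)\bigr|\,|f(s_1,s_2)|\,d(s_1,s_2).
\]
Since $g$ is bounded and Riemann–Stieltjes integrable with respect to $h$, and since $gf$ is Riemann integrable over $R$ (because $g$ times a Riemann integrable $f$ inherits integrability once one knows $g$ itself is integrable, a point I'd want to justify or extract from the hypothesis), the right-hand side can be controlled by the oscillation of $g$ on the cells of a fine enough partition together with $\sup|f|$ and the total area, making it smaller than any prescribed $\varepsilon$. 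Alternatively, and more in the spirit of ``obtaining the two-variable result from the one-variable one,'' I would apply \cite[Theorem 5.4.3]{Giselle} first in the variable $t_1$ with $t_2$ frozen and then in $t_2$, invoking a Fubini-type theorem for Riemann and Riemann–Stieltjes integrals to reassemble the iterated integrals into the double ones; this is presumably the route the authors intend, given the sentence preceding the lemma.

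The main obstacle I anticipate is making the passage from one variable to two rigorous: the iterated Riemann–Stieltjes integral in $t_1$ produces, for each fixed $t_2$, a function whose dependence on $t_2$ must itself be Riemann–Stieltjes integrable, and one needs a version of Fubini's theorem valid for the Riemann–Stieltjes integral against the "mixed increment" measure $df$ of a function of bounded variation in the sense of Vitali. Establishing that the iterated and double Riemann–Stieltjes integrals agree — i.e.\ that the planar partitions in the definition above yield the same value $A$ as the successive one-dimensional constructions — is the crux; once that interchange is available, Lemma~\ref{IIIR} follows by two applications of \cite[Theorem 5.4.3]{Giselle}. A secondary technical point is confirming that the product $gf$ is genuinely Riemann integrable over $R$ under the stated hypotheses, so that the right-hand side of the claimed identity is meaningful; this I expect to follow from boundedness of $g$, Riemann integrability of $f$, and a Lebesgue-criterion argument on the sets of discontinuity.
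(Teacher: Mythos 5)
The paper itself offers no proof of this lemma; it is simply asserted to follow from the one-variable result \cite[Theorem 5.4.3]{Giselle}, so your proposal must be judged on its own merits. Your identity $\Delta h(x_i,y_j)=\iint_{R_{i,j}}f$ is correct and is indeed the heart of the matter, but your key estimate is aimed the wrong way. After writing $S=\sum_{i,j}g(\eta_{i,j},\tau_{i,j})\iint_{R_{i,j}}f$, you compare $S$ directly with $\iint_R gf$ and arrive at a bound involving the oscillation of $g$ on the cells; making that bound small requires $\sum_{i,j}\operatorname{osc}(g,R_{i,j})\,|R_{i,j}|\to 0$, i.e.\ Riemann integrability of $g$ itself, which is \emph{not} among the hypotheses ($g$ is only bounded and Riemann--Stieltjes integrable with respect to $h$). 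The same objection undermines your attempt to secure Riemann integrability of $gf$ by first establishing integrability of $g$: that cannot be extracted from the stated assumptions. Your alternative route (iterating the one-variable theorem and invoking a Fubini-type interchange) is also not viable as stated, since the double Riemann--Stieltjes integral here is defined via the mixed increment over planar partitions and the hypotheses give no one-dimensional Riemann--Stieltjes integrability of the slices of $g$.

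The estimate should instead transfer the oscillation onto $f$, whose Riemann integrability \emph{is} assumed, exactly as in the one-variable proof. Compare $S$ with the Riemann sum $T=\sum_{i,j}g(\eta_{i,j},\tau_{i,j})f(\eta_{i,j},\tau_{i,j})\,|R_{i,j}|$ built from the \emph{same} partition and the \emph{same} tags. Writing $m_{i,j}$ and $M_{i,j}$ for the infimum and supremum of $f$ on $R_{i,j}$, both $\iint_{R_{i,j}}f$ and $f(\eta_{i,j},\tau_{i,j})|R_{i,j}|$ lie in $[m_{i,j}|R_{i,j}|,\,M_{i,j}|R_{i,j}|]$, so
\[
|S-T|\le \|g\|_\infty\sum_{i,j}\bigl(M_{i,j}-m_{i,j}\bigr)\,|R_{i,j}|,
\]
which tends to $0$ as $\|P\|\to 0$ by the Riemann--Darboux criterion applied to $f$. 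Since $S\to\iint_R g\,dh$ by hypothesis, the Riemann sums $T$ converge to the same value; this simultaneously proves that $gf$ is Riemann integrable over $R$ and that its integral equals $\iint_R g\,dh$, with no separate Lebesgue-criterion argument and no interchange of iterated integrals needed.
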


In the following definition, we will consider rectangles $Q$ where their
sides $I_1$ and $I_2$ can be of the form $(-\infty,\infty)$, $[a,\infty)$, $%
(-\infty,a]$ or bounded intervals. In addition, we will use the concept of
bounded variation on intervals, \cite{Moricz1}.

\begin{definition}\label{DFVA}\noindent
\begin{itemize}
\item[\textit{i)}] A function $f:Q\rightarrow\mathbb{R}$ is said to be of \textbf{%
bounded variation in the Vitali sense} over $Q$ and is denoted as $f \in
BV_V(Q)$, if  
\begin{equation*}
Var(f,Q):=\sup_{ R\subset Q}\text{ }\sup_{\{R_{i,j}\}\in \mathcal{P}(R)}
\left \{\sum_{i,j} \left |\Delta f(x_i,y_j) \right | \right \}<\infty,
\end{equation*}
where the rectangles $R$ are compacts contained in $Q$.

\item[\textit{ii)}] A function $f:Q\rightarrow\mathbb{R}$ is said to be of \textbf{%
bounded variation in the Hardy sense} over $Q$ and is denoted as $f\in
BV_H(Q)$, if $f\in BV_V(Q)$ and, for each $x,y$, $f(\cdot,y)$ y $f(x,\cdot)$
are of bounded variation over $I_1$ and $I_2$, respectively.
\end{itemize}
\end{definition}

It is evident that $BV_{H}(\mathbb{R}^2)$ is properly contained in $BV_{V}(%
\mathbb{R}^2)$ since $f(x,y)=x+y\in BV_{V}(\mathbb{R}^2)\setminus BV_{H}(%
\mathbb{R}^2)$, \cite{Moricz2}.

\begin{observation}
\label{Tlim} We can observe that if $f\in BV_V(\mathbb{R}^2)$, then 
\begin{equation*}
\begin{split}
\lim \limits_{\substack{ m \to\infty}}(Var(f,[m,\infty)\times\mathbb{R}%
)+Var(f,(-\infty,-m]\times\mathbb{R})& \\
+Var(f,\mathbb{R}\times[m,\infty))+Var(f,\mathbb{R}\times(-\infty,-m]))&=0.
\end{split}%
\end{equation*}
\end{observation}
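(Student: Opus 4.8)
The plan is to reduce the statement to the one-variable fact that the total variation of a $BV$ function over a far half-line is small, with an additivity property of the Vitali variation playing the role of the classical additivity over adjacent intervals. Since the four summands are nonnegative, it suffices to show that each converges to $0$; and by the reflections $x\mapsto -x$, $y\mapsto -y$ and by interchanging the two coordinates (each of which preserves membership in $BV_V(\mathbb{R}^2)$ and carries $[m,\infty)\times\mathbb{R}$ onto the three remaining half-plane regions), it is enough to prove
\[
\lim_{m\to\infty}Var\big(f,[m,\infty)\times\mathbb{R}\big)=0 .
\]

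Two preliminary facts drive the argument. First, if $P$ is a partition of a compact rectangle and $P'$ refines $P$, then $\sum_{i,j}|\Delta f(x_i,y_j)|$ computed for $P'$ is at least the one computed for $P$; indeed, for $p\le e\le q$ the quantity $\Delta f$ on $[p,q]\times[r,s]$ equals $\Delta f$ on $[p,e]\times[r,s]$ plus $\Delta f$ on $[e,q]\times[r,s]$, so inserting one grid line can only increase the sum, by the triangle inequality. Second, and as a consequence, for a compact rectangle $R$ one has $Var(f,R)=\sup_{P\in\mathcal{P}(R)}\sum_{i,j}|\Delta f(x_i,y_j)|$ (a partition of a subrectangle extends to a partition of $R$ by appending only nonnegative terms), and $Var(f,\cdot)$ is monotone under inclusion of regions. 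Using the first fact I would then prove the additivity lemma: for $a\le e\le b$ and any $c\le d$,
\[
Var\big(f,[a,b]\times[c,d]\big)=Var\big(f,[a,e]\times[c,d]\big)+Var\big(f,[e,b]\times[c,d]\big),
\]
and similarly in the second variable. The inequality $\le$ is obtained by refining an arbitrary partition of $[a,b]\times[c,d]$ so that it contains the line $x=e$ and then splitting it into a partition of each half; the inequality $\ge$ is obtained by taking near-optimal partitions of the two halves, replacing their $y$-partitions by their common refinement (which by the first fact does not decrease either sum), and concatenating them along the $x$-axis.

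With the lemma in hand, fix $\varepsilon>0$ and, using the second fact and $Var(f,\mathbb{R}^2)<\infty$, choose a compact rectangle $R_0=[a_0,b_0]\times[c_0,d_0]$ with $Var(f,R_0)>Var(f,\mathbb{R}^2)-\varepsilon$. Given $m>b_0$, $n>m$ and $k\ge\max\{|c_0|,|d_0|\}$, two applications of the additivity lemma in the $x$-direction yield
\[
Var\big(f,[a_0,n]\times[-k,k]\big)=Var\big(f,[a_0,b_0]\times[-k,k]\big)+Var\big(f,[b_0,m]\times[-k,k]\big)+Var\big(f,[m,n]\times[-k,k]\big).
\]
The left-hand side is at most $Var(f,\mathbb{R}^2)$; by monotonicity the first term on the right is at least $Var(f,R_0)>Var(f,\mathbb{R}^2)-\varepsilon$; the middle term is nonnegative. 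Hence $Var\big(f,[m,n]\times[-k,k]\big)<\varepsilon$. Since every compact rectangle contained in $[m,\infty)\times\mathbb{R}$ lies inside some $[m,n]\times[-k,k]$, monotonicity gives $Var\big(f,[m,\infty)\times\mathbb{R}\big)\le\varepsilon$ for every $m>b_0$, and letting $\varepsilon\to0$ proves the claim.

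The only genuine obstacle I anticipate is the additivity lemma. The pointwise identity that splits $\Delta f$ across a grid line is immediate, but turning it into additivity of $Var$ requires the careful bookkeeping of reconciling the two halves' generally different $y$-partitions and checking how the suprema combine; once that is in place, the remainder is a routine transcription of the classical one-dimensional exhaustion argument.
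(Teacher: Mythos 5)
Your proposal is correct. The paper states this as an Observation and supplies no proof at all, so there is nothing to compare against; your argument --- reducing by symmetry to the single region $[m,\infty)\times\mathbb{R}$, establishing that refinements do not decrease the Vitali partition sums, deducing additivity of $Var(f,\cdot)$ across a grid line, and then running the classical exhaustion argument from $Var(f,\mathbb{R}^2)<\infty$ --- is the natural way to fill that gap, and each step (the splitting identity for $\Delta f$, monotonicity under inclusion, and the near-optimal compact rectangle $R_0$) checks out.
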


We will show some properties that relate the previous concepts, but first we
recall the following spaces of continuous functions defined on $\mathbb{R}^2$%
.

\begin{itemize}
\item[\textit{i)}] $C_b(\mathbb{R}^2)$ is the space of real valued functions which are
bounded. 

\item[\textit{ii)}] $C_0(\mathbb{R}^2)$ is the space of functions which vanish at infinity. 

\item[\textit{iii)}] $C_c(\mathbb{R}^2)$ is the space of functions whose support is compact.
\end{itemize}

It is well known that $C_c(\mathbb{R}^2)\subsetneq C_0(\mathbb{R}%
^2)\subsetneq C_b(\mathbb{R}^2)$ and $C_c(\mathbb{R}^2)$ is dense in $C_0(%
\mathbb{R}^2)$ with respect to the supremum norm, \cite[Theorem 3.17]{Rudin}.

\begin{lemma}[{\textbf{\protect\cite[Lema 3.9]{Ghodadra}}}]
\label{TG2} Let $f\in BV_V(\mathbb{R}^2)$ and $(x,y)\in \mathbb{R}^2$ be a
continuity point of $f$. Then for each $\varepsilon>0$, there exists $%
\delta>0$ such that 
\begin{equation*}
Var(f,[x-\delta,x+\delta]\times[y-\delta,y+\delta])<\varepsilon.
\end{equation*}
\end{lemma}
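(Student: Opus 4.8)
The plan is to reduce the two–dimensional statement to the one–dimensional continuity fact for functions of bounded variation, exploiting the structure of the Vitali variation on a small square around $(x,y)$. First I would fix $\varepsilon>0$ and write, for $0<\delta\le 1$, the square $S_\delta=[x-\delta,x+\delta]\times[y-\delta,y+\delta]$. The key observation is that for any partition $\{R_{i,j}\}$ of a compact sub‑rectangle $R\subset S_\delta$, the sum $\sum_{i,j}|\Delta f(x_i,y_j)|$ can be controlled by a ``telescoping in one coordinate'' argument: group the terms column by column, so that the $j$‑th column contributes $\sum_i |\Delta f(x_i,y_j)|$, and bound each $\Delta f(x_i,y_j)$ by a sum of first‑coordinate increments of the functions $t\mapsto f(t,y_j)-f(t,y_{j-1})$. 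This reduces matters to estimating how much the one‑variable variations of $f(\cdot,y_j)$ and of the ``difference slices'' can pile up near $(x,y)$.

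Next I would use continuity of $f$ at $(x,y)$ to choose $\delta$ small. Here the cleanest route is: since $f\in BV_V(\mathbb R^2)\subset$ (locally) the class where the Vitali variation measure $\mu$ defined by $\mu(R)=Var(f,R)$ is finitely additive on rectangles and monotone, the quantity $Var(f,S_\delta)$ is nonincreasing as $\delta\downarrow 0$; hence $\lim_{\delta\to 0^+}Var(f,S_\delta)=:L\ge 0$ exists. It then suffices to show $L=0$. Suppose $L>0$. Then for every $\delta$ there is a compact $R\subset S_\delta$ and a partition $\{R_{i,j}\}$ with $\sum_{i,j}|\Delta f(x_i,y_j)|>L/2$. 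Refining the partition so that $(x,y)$ is a grid corner (this only increases the sum), and using that each $\Delta f(x_i,y_j)$ is a signed combination of four values of $f$ each within $\varepsilon'$ of $f(x,y)$ once $\delta$ is small enough by continuity — this alone is not enough, so instead I would isolate the single sub‑rectangle containing $(x,y)$, show its $|\Delta f|$ term is $<\varepsilon'$ directly from continuity, and peel it off, leaving a partition of $S_\delta$ minus a neighborhood of $(x,y)$; iterating/shrinking leads to a contradiction with $\lim_{\delta\to0^+}Var(f,S_\delta)=L$ being attained on arbitrarily thin frames whose variation must vanish.

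The honest main obstacle is precisely this last point: continuity at a single point does \emph{not} bound $|\Delta f|$ on a rectangle having that point as a \emph{corner} unless all four corners are close to $(x,y)$, which forces the rectangle to be \emph{inside} $S_\delta$ with all vertices near $(x,y)$ — true for the one sub‑rectangle touching $(x,y)$ but false for the others. The resolution I would pursue is a two‑step squeeze: (1) for the finitely many sub‑rectangles not touching $(x,y)$, their total $|\Delta f|$ contribution is at most $Var(f, S_\delta\setminus (x-\rho,x+\rho)\times(y-\rho,y+\rho))$ for a suitable $\rho<\delta$, and then apply the finite additivity/monotonicity of $Var$ together with Observation \ref{Tlim}‑type reasoning localized at a point — i.e. the variation of $f$ over a ``thin frame'' $S_\delta\setminus S_\rho$ tends to $0$ as $\rho,\delta\to0$ because $Var(f,S_1)<\infty$ and the frame variations are tails of a convergent sum; (2) the single central sub‑rectangle contributes $<\varepsilon/2$ by continuity of $f$ at $(x,y)$. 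Choosing $\delta$ (and then $\rho$) small enough to make both pieces $<\varepsilon/2$ gives $Var(f,S_\delta)<\varepsilon$, which is the claim. I expect step (1)—making rigorous that the localized ``frame variation'' of a $BV_V$ function vanishes—to require the most care, and it is essentially a pointwise analogue of Observation \ref{Tlim}.
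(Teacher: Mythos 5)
The paper itself offers no proof of this lemma; it is imported verbatim from \cite[Lemma 3.9]{Ghodadra}, so your proposal can only be judged on its own merits. Your overall scaffolding is sound and is essentially the right route: monotonicity of $\delta\mapsto Var(f,S_\delta)$ (with $S_\delta=[x-\delta,x+\delta]\times[y-\delta,y+\delta]$) gives the limit $L=\lim_{\delta\to 0^+}Var(f,S_\delta)$, the Vitali variation is additive over grid subdivisions, and you correctly diagnose the central danger, namely that continuity at one point says nothing about cells whose corners are far from $(x,y)$.

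The genuine gap is in your step (2) and in the final assembly. Continuity at $(x,y)$ controls only the \emph{single increment} $|\Delta f(C)|$ of the central cell $C$ in one particular sum; it does not control $Var(f,C)$, which is a supremum over all partitions of $C$ and satisfies $Var(f,C)\geq Var(f,S_\rho)\geq L$ whenever $S_\rho\subset C$. Since the exact decomposition is $Var(f,S_\delta)=Var(f,S_\rho)+Var(f,S_\delta\setminus S_\rho)$, making the frame small still leaves you needing $Var(f,S_\rho)<\varepsilon/2$, which is precisely the assertion $L<\varepsilon/2$ you are trying to prove: as written the argument is circular. (Also, for a fixed $\rho$ the bound ``cells other than $C$ contribute at most $Var(f,S_\delta\setminus S_\rho)$'' fails for partitions whose central cell does not contain $S_\rho$.) The repair is to put the two quantities into one inequality: for an arbitrary grid partition $P$ of $S_\delta$ whose cell $C$ contains $(x,y)$ in its interior (the case where $(x,y)$ lies on a grid line is handled by grouping the at most four cells meeting it), additivity over the frame rectangles surrounding $C$ gives
\[
\sum_P|\Delta f|\;\leq\;|\Delta f(C)|+\bigl(Var(f,S_\delta)-Var(f,C)\bigr)\;\leq\;4\varepsilon'+Var(f,S_\delta)-L,
\]
where $4\varepsilon'$ comes from continuity once $\delta$ is small enough that $|f-f(x,y)|<\varepsilon'$ on $S_\delta$. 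Taking the supremum over $P$ yields $L\leq 4\varepsilon'$ for every $\varepsilon'>0$, hence $L=0$, and then $Var(f,S_\delta)\downarrow L=0$ is the lemma. The missing idea, in short, is to play $|\Delta f(C)|$ (controlled by continuity) against $Var(f,C)$ (bounded below by $L$) within a single estimate, rather than bounding the two pieces of $Var(f,S_\delta)$ separately.
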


Now we define the improper Riemann-Stieltjes integral.

\begin{definition}
Let $f,g:\mathbb{R}^2\rightarrow\mathbb{R}$ be functions. The multiple limit 
\begin{equation*}  \label{LPrin}
\lim_{\begin{subarray}{l}a,c\rightarrow -\infty\\
b,d\rightarrow\infty\end{subarray}}\iint_{[a,b]\times[c,d]%
}g(t_1,t_2)df(t_1,t_2)
\end{equation*}
is called the \textbf{improper Riemann-Stieltjes integral of $g$ with
respect to $f$}, when it exists in the Pringsheim sense \cite{Pringsheim},
and is denoted by%
\begin{equation*}
\iint_{\mathbb{R}^2}g(t_1,t_2)df(t_1,t_2).
\end{equation*}
\end{definition}

From \cite{Moricz2}, if $g\in C_b(\mathbb{R}^2)$ and $f\in BV_V(\mathbb{R}^2)
$, then the improper Riemann-Stieltjes integral of $g$ with respect to $f$
exists.

\begin{lemma}[{\textbf{\protect\cite[Lema 3.6]{Ghodadra}}}]
\label{TG1} If $g\in C_b(\mathbb{R}^2)$ and $f\in BV_V(\mathbb{R}^2)$, then 
\begin{flalign*}
&\begin{aligned}
\text{i)}&\text{ The function } V(f;t_1,t_2)=Var(f,(-\infty,t_1]\times(-\infty,t_2)) \text{ defined for each }\\
& \text{  }(t_1,t_2)\in\mathbb{R}^2,
\text{ belongs to } BV_V(\mathbb{R}^2)\\
\text{ii)}&\text{ } \left|\iint_{\mathbb{R}^2}g(t_1,t_2)df(t_1,t_2)\right|\leq\iint_{\mathbb{R}^2}|g(t_1,t_2)|dV(f;t_1,t_2).
\end{aligned}&&
\end{flalign*}
\end{lemma}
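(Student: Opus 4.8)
The final statement to prove is Lemma~\ref{TG1}: for $g\in C_b(\mathbb{R}^2)$ and $f\in BV_V(\mathbb{R}^2)$, the variation function $V(f;t_1,t_2)=Var(f,(-\infty,t_1]\times(-\infty,t_2))$ lies in $BV_V(\mathbb{R}^2)$, and $\left|\iint_{\mathbb{R}^2}g\,df\right|\le\iint_{\mathbb{R}^2}|g|\,dV$.

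\medskip

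The plan is to reduce everything to the already-established existence of the improper Riemann–Stieltjes integrals (quoted from \cite{Moricz2}) and to the behaviour of $\Delta$ on the variation function. First I would prove (i). The key observation is that for a rectangle $R=[p,q]\times[r,s]$, the increment $\Delta V(f;q,s)$ over $R$ equals the supremum of $\sum_{i,j}|\Delta f(x_i,y_j)|$ taken over all partitions of $R$ itself; this is the standard ``$V$ is additive on rectangles and its rectangle-increment recovers the local Vitali variation'' fact, which one checks by writing $V(f;q,s)-V(f;p,s)-V(f;q,r)+V(f;p,r)$ and using that the variation of $f$ on $(-\infty,q]\times(-\infty,s]$ splits as a sum of the variations on the four sub-rectangles determined by the lines $\{t_1=p\}$ and $\{t_2=r\}$ (finiteness of $Var(f,\mathbb{R}^2)$ is what makes this legitimate). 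In particular $\Delta V(f;x_i,y_j)\ge 0$ for every grid cell, so for any partition $\{R_{i,j}\}$ of any compact $R\subset\mathbb{R}^2$ we get $\sum_{i,j}|\Delta V(f;x_i,y_j)|=\sum_{i,j}\Delta V(f;x_i,y_j)=\Delta V(f;$ corners of $R)\le Var(f,\mathbb{R}^2)<\infty$. Taking the double supremum over $R$ and its partitions yields $Var(V(f;\cdot,\cdot),\mathbb{R}^2)\le Var(f,\mathbb{R}^2)<\infty$, i.e.\ $V(f;\cdot,\cdot)\in BV_V(\mathbb{R}^2)$.

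\medskip

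For (ii), both improper integrals exist: the left one because $g\in C_b$, $f\in BV_V$; the right one because $|g|\in C_b$ and, by part (i), $V(f;\cdot,\cdot)\in BV_V$. So it suffices to establish the inequality at the level of the finite rectangular integrals $\iint_{[a,b]\times[c,d]}$ and pass to the Pringsheim limit. On a fixed rectangle, for any tagged partition one has
\begin{equation*}
\left|\sum_{i,j} g(\eta_{i,j},\tau_{i,j})\,\Delta f(x_i,y_j)\right|\le \sum_{i,j}|g(\eta_{i,j},\tau_{i,j})|\,|\Delta f(x_i,y_j)|\le \sum_{i,j}|g(\eta_{i,j},\tau_{i,j})|\,\Delta V(f;x_i,y_j),
\end{equation*}
the last step again using that $|\Delta f(x_i,y_j)|\le \Delta V(f;x_i,y_j)$ (the single-cell case of the supremum defining the rectangle-increment of $V$). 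The right-hand sum is a Riemann–Stieltjes sum for $\iint|g|\,dV$ with the \emph{same} tags and partition. Choosing the partition fine enough (using the common refinement of the meshes that make each of the three Riemann–Stieltjes sums $\varepsilon$-close to its integral, which is possible since all three integrals exist), one passes to the limit to obtain $\left|\iint_{[a,b]\times[c,d]}g\,df\right|\le \iint_{[a,b]\times[c,d]}|g|\,dV$. Finally let $a,c\to-\infty$, $b,d\to\infty$ in the Pringsheim sense; both sides converge to the respective improper integrals, and the inequality is preserved.

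\medskip

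The main obstacle is the bookkeeping in part (i): rigorously justifying that $\Delta V(f;q,s)$ over a rectangle $R$ equals the Vitali variation of $f$ restricted to $R$, i.e.\ the ``monotone rectangle-increment'' property of the variation function. This requires the additivity of $Var(f,\cdot)$ over a partition of a half-infinite rectangle into sub-rectangles, which in turn needs the global finiteness $Var(f,\mathbb{R}^2)<\infty$ so that no indeterminate $\infty-\infty$ arises; with that in hand the decomposition is a routine supremum-of-sums argument, and the sign non-negativity of $\Delta V$ then makes both the $BV_V$ bound and the tag-wise domination $|\Delta f|\le\Delta V$ fall out immediately. Everything after that is the standard ``inequality for sums, common refinement, pass to the limit'' routine for Riemann–Stieltjes integrals, here carried through the Pringsheim double limit.
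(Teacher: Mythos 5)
The paper does not prove this lemma at all: it is quoted verbatim as Lemma 3.6 of Ghodadra--F\"ul\"op \cite{Ghodadra}, so there is no internal proof to compare against. Your argument is a correct, self-contained reconstruction along the standard lines. The crux, as you rightly identify, is the additivity of the Vitali variation over grid decompositions of a rectangle (refining a partition can only increase $\sum_{i,j}|\Delta f(x_i,y_j)|$ because $\Delta f$ over a cell is the sum of $\Delta f$ over its subcells, which gives $Var(f,R)\le\sum_k Var(f,R_k)$; merging partitions of the pieces via a common grid refinement gives the reverse inequality), and finiteness of $Var(f,\mathbb{R}^2)$ removes any $\infty-\infty$ issue when passing to the half-infinite rectangles defining $V$. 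From this the identity $\Delta V(f;x_i,y_j)=Var(f,R_{i,j})\ge|\Delta f(x_i,y_j)|\ge 0$ follows, which simultaneously yields part (i) (with the bound $Var(V,\mathbb{R}^2)\le Var(f,\mathbb{R}^2)$) and the tag-wise domination needed for part (ii); the passage from Riemann--Stieltjes sums to the compact-rectangle inequality and then to the Pringsheim limit is routine given the existence results the paper already quotes from \cite{Moricz2}. The only cosmetic caveat is that the paper writes $V(f;t_1,t_2)=Var(f,(-\infty,t_1]\times(-\infty,t_2))$ with a half-open second factor, almost certainly a typo for $(-\infty,t_2]$; your proof implicitly uses the closed version, which is the one consistent with the rest of the paper.
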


\begin{theorem}
\label{TY1} Suppose that $g\in C_0(\mathbb{R}^2)$ and $f\in BV_V(\mathbb{R}%
^2)$. Then, there exists a $\sigma$-algebra $\mathbb{M}$ of \text{ }$\mathbb{R}^2$
containing the Borelians and there exists a unique positive finite measure $%
\mu$ on $\mathbb{M}$ such that 
\begin{flalign*}
&\begin{aligned}
\text{i)}&\textbf{ } \mu((a,b)\times(c,d))\leq Var(f,[a,b]\times[c,d]), \text{ for } (a,b)\times(c,d)\subset\mathbb{R}^2.\\
\text{ii)}&\text{ } \iint_{\mathbb{R}^2}g(t_1,t_2)dV(f;t_1,t_2)=\iint_{\mathbb{R}^2}g(t_1,t_2)d\mu(t_1,t_2).
\end{aligned}&&
\end{flalign*}
\end{theorem}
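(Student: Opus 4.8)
\textbf{Proof strategy for Theorem \ref{TY1}.} The plan is to build the measure $\mu$ by realizing the functional $g\mapsto \iint_{\mathbb{R}^2} g\, dV(f;\cdot,\cdot)$ as a positive linear functional on $C_0(\mathbb{R}^2)$ and then invoking the Riesz representation theorem. First I would check that $V(f;\cdot,\cdot)$ is genuinely of bounded variation in the Vitali sense (this is part \textit{i)} of Lemma \ref{TG1}) and is monotone nondecreasing in each coordinate, so that $\Delta V(f;x_i,y_j)\ge 0$ for every rectangle; consequently, for every $g\in C_0(\mathbb{R}^2)$ with $g\ge 0$, each Riemann--Stieltjes sum $\sum_{i,j} g(\eta_{i,j},\tau_{i,j})\,\Delta V(f;x_i,y_j)$ is nonnegative, and passing to the limit (the improper Riemann--Stieltjes integral exists because $g\in C_b(\mathbb{R}^2)$ and $V(f;\cdot,\cdot)\in BV_V(\mathbb{R}^2)$ by the result quoted from \cite{Moricz2}) gives $\Lambda g := \iint_{\mathbb{R}^2} g\, dV(f;\cdot,\cdot)\ge 0$. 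Linearity of $\Lambda$ is immediate from linearity of the Riemann--Stieltjes integral in the integrand, and boundedness follows from Lemma \ref{TG1} \textit{ii)} applied with $V(f;\cdot,\cdot)$ in place of $f$ together with the total-variation estimate $\iint_{\mathbb{R}^2} 1\, dV(f;\cdot,\cdot) = Var(f,\mathbb{R}^2) < \infty$; hence $\Lambda$ is a positive bounded linear functional on $C_0(\mathbb{R}^2)$.

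Next I would apply the Riesz representation theorem for $C_0(\mathbb{R}^2)$ (e.g.\ \cite[Theorem 2.14 / 6.19]{Rudin}, using that $\mathbb{R}^2$ is locally compact Hausdorff and $\sigma$-compact): there is a $\sigma$-algebra $\mathbb{M}$ containing the Borel sets and a unique regular positive measure $\mu$ on $\mathbb{M}$ with $\Lambda g = \iint_{\mathbb{R}^2} g\, d\mu$ for all $g\in C_0(\mathbb{R}^2)$. Since $\Lambda$ is bounded with norm $\le Var(f,\mathbb{R}^2)$, the measure $\mu$ is finite. This immediately yields conclusion \textit{ii)}. For conclusion \textit{i)}, I would use the outer-regularity half of the Riesz theorem — $\mu(U) = \sup\{\Lambda g : g\in C_c(\mathbb{R}^2),\ 0\le g\le 1,\ \operatorname{supp} g\subset U\}$ for open $U$ — specialized to $U = (a,b)\times(c,d)$. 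For any such $g$ supported in $U$, a Riemann--Stieltjes sum of $g$ against $V(f;\cdot,\cdot)$ over $[a,b]\times[c,d]$ is bounded above by $\sum_{i,j}\Delta V(f;x_i,y_j)$ restricted to subrectangles meeting $\operatorname{supp} g$, which telescopes to at most $Var(V(f;\cdot,\cdot),[a,b]\times[c,d])$. The remaining point is to identify $Var(V(f;\cdot,\cdot),[a,b]\times[c,d])$ with $Var(f,[a,b]\times[c,d])$ (or at least bound it by the latter); this uses that $V(f;\cdot,\cdot)$ is the Vitali variation function of $f$, so that over any compact rectangle its own Vitali variation does not exceed that of $f$ — a monotonicity/superadditivity property of the variation set function.

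The main obstacle I anticipate is the last identification, namely controlling the Vitali variation of the variation function $V(f;t_1,t_2) = Var(f,(-\infty,t_1]\times(-\infty,t_2])$ by the variation of $f$ on the corresponding rectangle. Naively one has $\Delta V(f;x_i,y_j) \ge |\Delta f(x_i,y_j)|$ but one needs the reverse-type estimate $\sum_{i,j}\Delta V(f;x_i,y_j) \le Var(f,[a,b]\times[c,d])$, which requires a careful argument: $\Delta V(f;x_i,y_j)$ equals the "increment of Vitali variation" over the subrectangle $[x_{i-1},x_i]\times[y_{j-1},y_j]$ only up to boundary-continuity subtleties of the half-open variation function, so I would first establish an additivity lemma for $Var(f,\cdot)$ over a grid of rectangles — i.e.\ that $\sum_{i,j} Var(f,R_{i,j}) \le Var(f,\bigcup R_{i,j})$ by concatenating optimal partitions of each $R_{i,j}$ into a single partition of the big rectangle — and then relate $\Delta V(f;x_i,y_j)$ to $Var(f,R_{i,j})$. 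Everything else (positivity, linearity, boundedness, the bare application of Riesz, finiteness, and conclusion \textit{ii)}) is routine once the functional $\Lambda$ is set up, so the write-up should front-load the variation-additivity lemma and keep the functional-analytic part brief.
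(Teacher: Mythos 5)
Your proposal follows essentially the same route as the paper: define $\Lambda(g)=\iint_{\mathbb{R}^2} g\,dV(f;\cdot,\cdot)$ as a positive bounded linear functional, invoke Rudin's Riesz representation theorem to produce $\mathbb{M}$ and $\mu$, deduce \textit{i)} from the outer-regularity formula together with the variation bound, and deduce \textit{ii)} by passing from $C_c(\mathbb{R}^2)$ to $C_0(\mathbb{R}^2)$ (the paper does this via uniform density of $C_c$ in $C_0$; you apply the $C_0$ version of Riesz directly, which is equivalent). The only substantive difference is that you explicitly identify and plan to prove the grid-additivity estimate $\sum_{i,j}\Delta V(f;x_i,y_j)\leq Var(f,[a,b]\times[c,d])$ underlying \textit{i)} --- a point the paper passes over by attributing the inequality $\mu(int(R))\leq Var(f,R)$ to Rudin's theorem without detail --- so your write-up would, if anything, be more complete.
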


\begin{proof}
From Lemma \ref{TG1}, over $C_b(\mathbb{R}^2)$ we define the bounded
positive linear functional 
\begin{equation*}
\Lambda(g)=\iint_{\mathbb{R}^2} g(t_1,t_2)dV(f;t_1,t_2).
\end{equation*}

By \cite[Theorem 2.14]{Rudin} (Riesz Representation Theorem), there exists a 
$\sigma$-algebra $\mathbb{M}$ of $\mathbb{R}^2$ containing the Borelians and
there exists a unique positive measure $\mu$ on $\mathbb{M}$ such that 
\begin{equation}
\begin{split}
\Lambda(g)=\iint_{\mathbb{R}^2}g(t_1,t_2)d\mu(t_1,t_2), \text{ for all }
g\in C_c(\mathbb{R}^2).  \label{TRU}
\end{split}%
\end{equation}

Let $R=[a,b]\times[c,d]\subset \mathbb{R}^2$. From \cite[Theorem 2.14]{Rudin}%
, it follows that $\mu(int(R))\leq Var(f,R)$. Thus, $\mu$ is a finite
measure. This proves $i)$.\newline

Now, let $g\in C_0(\mathbb{R}^2)$ be. Then, there exists a sequence of
functions $(g_n)$ that belongs to $C_c(\mathbb{R}^2)$ which converges
uniformly to $g$. Moreover, 
\begin{equation}
\begin{split}
\lim_{n\rightarrow\infty}\iint_{\mathbb{R}^2}g_n(t_1,t_2)dV(f;t_1,t_2)=%
\iint_{\mathbb{R}^2}g(t_1,t_2)dV(f;t_1,t_2).  \label{ETR1}
\end{split}%
\end{equation}
and 
\begin{equation}
\begin{split}
\lim_{n\rightarrow\infty}\iint_{\mathbb{R}^2}g_n(t_1,t_2)d\mu(t_1,t_2)=%
\iint_{\mathbb{R}^2}g(t_1,t_2)d\mu(t_1,t_2).  \label{ETR2}
\end{split}%
\end{equation}

By (\ref{TRU}), (\ref{ETR1}) and (\ref{ETR2}) we conclude that, for $g\in
C_0(\mathbb{R}^2)$, 
\begin{equation*}
\begin{split}
\iint_{\mathbb{R}^2}g(t_1,t_2)dV(f;t_1,t_2)=\iint_{\mathbb{R}%
^2}g(t_1,t_2)d\mu(t_1,t_2).
\end{split}%
\end{equation*}
\end{proof}

\subsection{The space $BV_{||0||}(\mathbb{R}^2)$}

The space of functions $f$ in $BV_V(\mathbb{R}^2)$ which satisfy that $%
\lim\limits_{\substack{ ||(x,y)|| \to \infty}}f(x,y)=0$ is denoted by $%
BV_{||0||}(\mathbb{R}^2)$. 

The following lemma can be proved from the definition of bounded variation, see Definition \ref{DFVA} and \cite{Moricz1}.

\begin{lemma}
\label{TA1} Let $f\in BV_{||0||}(\mathbb{R}^2)$ be. Then, for each $(x,y)\in%
\mathbb{R}^2$, 
\begin{flalign*}
&\begin{aligned}
\text{i)}&\textbf{ } |f(x,y)|\leq Var(f(x,\cdot),\mathbb{R})\leq Var(f,[x,\infty)\times\mathbb{R}), \text{ } \\
\text{ii)}& \textbf{ }|f(x,y)|\leq Var(f(x,\cdot),\mathbb{R})\leq Var(f,(-\infty,x]\times\mathbb{R}), \text{ } \\
\text{iii)}&\text{ } |f(x,y)|\leq Var(f(\cdot,y),\mathbb{R})\leq Var(f,\mathbb{R}\times[y,\infty)), \text{ } \\
\text{iv)}&\text{ } |f(x,y)|\leq Var(f(\cdot,y),\mathbb{R})\leq Var(f,\mathbb{R}\times(-\infty,y]). \text{ }
\end{aligned}&&
\end{flalign*}
\end{lemma}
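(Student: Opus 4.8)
The plan is to prove item (i) in full; items (ii)--(iv) then follow by the same argument after reflecting the relevant coordinate or interchanging the roles of the two variables. Throughout I use only Definition \ref{DFVA}, the definition of $BV_{||0||}(\mathbb{R}^2)$, and the standard one-dimensional facts about functions of bounded variation recalled from \cite{Moricz1}. Fix $(x,y)\in\mathbb{R}^2$.

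For the first inequality of (i), note that since $\|(x,t)\|\to\infty$ as $t\to+\infty$ and $f$ vanishes at infinity, we have $f(x,t)\to 0$. Hence, for every $t>y$,
\begin{equation*}
|f(x,y)-f(x,t)|\le Var\bigl(f(x,\cdot),[y,t]\bigr)\le Var\bigl(f(x,\cdot),\mathbb{R}\bigr),
\end{equation*}
and letting $t\to+\infty$ gives $|f(x,y)|\le Var(f(x,\cdot),\mathbb{R})$. For the second inequality, fix an arbitrary partition $c=y_0<\cdots<y_m=d$ of a compact interval $[c,d]$ and, for $b>x$, use the trivial partition $x=x_0<x_1=b$ in the first variable. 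The resulting partition $\{[x_0,x_1]\times[y_{j-1},y_j]\}$ of $R=[x,b]\times[c,d]\subset[x,\infty)\times\mathbb{R}$ satisfies
\begin{equation*}
\begin{split}
\sum_{j=1}^m|\Delta f(x_1,y_j)|
&=\sum_{j=1}^m\bigl|[f(b,y_j)-f(b,y_{j-1})]-[f(x,y_j)-f(x,y_{j-1})]\bigr|\\
&\le Var\bigl(f,[x,\infty)\times\mathbb{R}\bigr).
\end{split}
\end{equation*}
Since $\|(b,y_j)\|\to\infty$ as $b\to+\infty$, each term $f(b,y_j)$ tends to $0$; letting $b\to+\infty$ in the finite sum yields $\sum_{j=1}^m|f(x,y_j)-f(x,y_{j-1})|\le Var(f,[x,\infty)\times\mathbb{R})$. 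Taking the supremum over all partitions of all compact intervals $[c,d]\subset\mathbb{R}$ gives $Var(f(x,\cdot),\mathbb{R})\le Var(f,[x,\infty)\times\mathbb{R})$, which finishes (i).

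The only delicate point is the passage to the limit $b\to+\infty$ inside the finite sum, and this is exactly where the hypothesis $f\in BV_{||0||}(\mathbb{R}^2)$ is used essentially: the vanishing of $f$ at infinity annihilates the ``boundary'' contributions $f(b,y_j)-f(b,y_{j-1})$ and leaves precisely the one-dimensional variation of the slice $f(x,\cdot)$ (for a general $f\in BV_V(\mathbb{R}^2)$, such as $f(x,y)=x+y$, no such bound can hold). For (ii) one replaces $b>x$ by $a<x$ and uses $f(a,y_j)\to 0$ as $a\to-\infty$; for (iii) and (iv) one interchanges the two coordinates, working with the horizontal slice $f(\cdot,y)$ and the half-planes $\mathbb{R}\times[y,\infty)$ and $\mathbb{R}\times(-\infty,y]$, respectively.
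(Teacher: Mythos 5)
Your proof is correct. The paper does not actually supply an argument for this lemma---it only remarks that it ``can be proved from the definition of bounded variation'' and the one-variable facts in M\'oricz---and your verification (bounding $|f(x,y)-f(x,t)|$ by the variation of the slice, and using the degenerate partition $\{[x,b]\times[y_{j-1},y_j]\}$ of a rectangle in the half-plane so that letting $t\to\infty$, resp.\ $b\to\infty$, makes the vanishing-at-infinity hypothesis kill the boundary terms) is precisely the elementary argument from Definition \ref{DFVA} that the authors leave to the reader.
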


The space $BV_{H_0}(\mathbb{R}^2)$ is defined in \cite{Mendoza3} as 
\begin{equation*}
BV_{H_0}(\mathbb{R}^2)=\{f\in BV_{H}(\mathbb{R}^2):\text{ }\lim \limits
_{\substack{ a \to \pm\infty}}f(a,y)=0=\lim \limits_{\substack{ b \to
\pm\infty}}f(x,b),\text{ } \forall x,y\in\mathbb{R}\}.
\end{equation*}
From Lemma \ref{TA1} and Observation \ref{Tlim}, we obtain the following
characterization theorem that confirms the equality (2.3) in \cite{Mendoza3}.

\begin{theorem}
\label{CTH} The function $f\in BV_{||0||}(\mathbb{R}^2)$ if and only if $%
f\in BV_{H_0}(\mathbb{R}^2)$.
\end{theorem}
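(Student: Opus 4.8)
The plan is to prove the two implications separately. Both directions rest on three ingredients already available: the monotonicity of the Vitali variation under inclusion of the regions $Q$ (immediate from Definition \ref{DFVA}, since enlarging $Q$ only enlarges the family of competing inscribed rectangles and their partitions), the pointwise estimates of Lemma \ref{TA1}, and the uniform decay of the four ``tail variations'' recorded in Observation \ref{Tlim}.

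For the forward implication I would assume $f\in BV_{||0||}(\mathbb{R}^2)$, so that $f\in BV_V(\mathbb{R}^2)$ by definition, and then verify the two defining features of $BV_{H_0}(\mathbb{R}^2)$. The vanishing of the one-variable sections at infinity is immediate: for fixed $x$ one has $\|(x,b)\|\to\infty$ as $b\to\pm\infty$, whence $f(x,b)\to 0$, and symmetrically $f(a,y)\to 0$ as $a\to\pm\infty$ for fixed $y$. That the sections lie in $BV(\mathbb{R})$ then follows from Lemma \ref{TA1} together with monotonicity, since
\[
Var(f(x,\cdot),\mathbb{R})\le Var(f,[x,\infty)\times\mathbb{R})\le Var(f,\mathbb{R}^2)<\infty,
\]
and likewise for $f(\cdot,y)$ using part (iii) of that lemma. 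Hence $f\in BV_H(\mathbb{R}^2)$, and together with the limit conditions just established, $f\in BV_{H_0}(\mathbb{R}^2)$.

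For the converse I would assume $f\in BV_{H_0}(\mathbb{R}^2)\subseteq BV_V(\mathbb{R}^2)$ and show $f(x,y)\to 0$ as $\|(x,y)\|\to\infty$. I would first note that the conclusions of Lemma \ref{TA1} are applicable in this setting, since its proof uses only that $f\in BV_V(\mathbb{R}^2)$ and that the one-variable sections of $f$ decay at $\pm\infty$ --- precisely the defining properties of $BV_{H_0}(\mathbb{R}^2)$. Given $\varepsilon>0$, Observation \ref{Tlim} furnishes $m_0>0$ such that
\[
Var(f,[m,\infty)\times\mathbb{R})+Var(f,(-\infty,-m]\times\mathbb{R})+Var(f,\mathbb{R}\times[m,\infty))+Var(f,\mathbb{R}\times(-\infty,-m])<\varepsilon
\]
for every $m\ge m_0$. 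Taking $M=\sqrt{2}\,m_0$ and $\|(x,y)\|>M$ forces $\max\{|x|,|y|\}>m_0$, so one of the four cases $x>m_0$, $x<-m_0$, $y>m_0$, $y<-m_0$ occurs; in each, the corresponding part of Lemma \ref{TA1} together with monotonicity bounds $|f(x,y)|$ by one of the four tail variations above, hence by $\varepsilon$. This gives $\lim_{\|(x,y)\|\to\infty}f(x,y)=0$ and therefore $f\in BV_{||0||}(\mathbb{R}^2)$.

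The step I expect to be the crux is the uniformity used in the converse: passing from the separate vanishing of the one-variable sections at $\pm\infty$ to the vanishing of $f$ itself along every sequence escaping to infinity. Pointwise section information alone does not suffice --- a function with well-separated unit bumps along the diagonal would have vanishing sections yet not tend to $0$ at infinity --- and it is exactly membership in $BV_V(\mathbb{R}^2)$, through Observation \ref{Tlim}, that excludes such behaviour and supplies the required control. The analogous but milder subtlety in the forward direction is that $BV_V(\mathbb{R}^2)$ plus decay of $f$ does not by itself make the sections of bounded variation; this is precisely what the estimate $Var(f(x,\cdot),\mathbb{R})\le Var(f,[x,\infty)\times\mathbb{R})$ of Lemma \ref{TA1} delivers, its right-hand side being finite because $f\in BV_V(\mathbb{R}^2)$.
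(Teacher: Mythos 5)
Your proof is correct and follows essentially the same route the paper indicates: the equivalence is derived from Lemma \ref{TA1} (the pointwise and sectional variation estimates) together with Observation \ref{Tlim} (decay of the tail variations), with monotonicity of the Vitali variation supplying the remaining comparisons. Your explicit remark that the conclusions of Lemma \ref{TA1} remain valid under the $BV_{H_0}$ hypotheses is a worthwhile clarification of a step the paper leaves implicit.
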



It is important to note that 
\begin{equation}  \label{ENOC}
\begin{split}
L^1(\mathbb{R}^2)\cap BV_{H}(\mathbb{R}^2)\subsetneq BV_{||0||}(\mathbb{R}%
^2) \not\subset L^1(\mathbb{R}^2).
\end{split}%
\end{equation}

\begin{example}
{}\label{EFBV}  The function 
\begin{equation*}  \label{EFBVH0}
f(x,y) = 
\begin{cases}
(1/x)(1/y) & \quad\text{ si } x,y\geq 1 \\ 
0 & \quad\text{ si } x<1 \text{ o } y<1,%
\end{cases}%
\end{equation*}
belongs to $BV_{||0||}(\mathbb{R}^2)\setminus L^1(\mathbb{R}^2)$. This
function illustrates the contention relationships in (\ref{ENOC}).
\end{example}

\begin{proposition}
\label{TY4} If $g\in C_b(\mathbb{R}^2)$ and $f\in BV_{||0||}(\mathbb{R}^2)$,
then 
\begin{equation*}
\iint_{\mathbb{R}^2}g(t_1,t_2)df(t_1,t_2)=\iint_{\mathbb{R}%
^2}f(t_1,t_2)dg(t_1,t_2).
\end{equation*}
\end{proposition}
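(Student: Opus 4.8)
The plan is to reduce the claim to a two-dimensional Riemann--Stieltjes integration-by-parts identity on bounded rectangles, and then let the rectangle exhaust $\mathbb{R}^2$, using that $f$ and its one-variable boundary slices have variations that vanish at infinity.

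First I would fix a bounded rectangle $R=[a,b]\times[c,d]$ and establish the integration-by-parts formula
\begin{multline*}
\iint_R f\,dg = \iint_R g\,df + \Delta(fg)(R) - \int_a^b g(t_1,d)\,d_{t_1}f(t_1,d) + \int_a^b g(t_1,c)\,d_{t_1}f(t_1,c) \\
- \int_c^d g(b,t_2)\,d_{t_2}f(b,t_2) + \int_c^d g(a,t_2)\,d_{t_2}f(a,t_2),
\end{multline*}
where $\Delta(fg)(R)=f(b,d)g(b,d)-f(a,d)g(a,d)-f(b,c)g(b,c)+f(a,c)g(a,c)$ is the second difference of $fg$ over $R$. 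I would obtain this by applying summation by parts (Abel's identity) successively in the two coordinates to the Riemann--Stieltjes sums $\sum_{i,j}g(\eta_{i,j},\tau_{i,j})\Delta f(x_i,y_j)$; the four one-dimensional integrals over the edges of $R$ come from the boundary terms of the two summations, and one checks that the potential area-type correction terms cancel (for instance, verifying the identity first for smooth $f$ and then passing to the general case by an approximation or a direct estimate on the Riemann--Stieltjes sums). Every integral appearing here exists: $\iint_R g\,df$ exists since $g\in C_b(\mathbb{R}^2)$ and $f\in BV_V(\mathbb{R}^2)$; each edge integral exists because $g$ is continuous and, by Theorem \ref{CTH}, $f\in BV_{||0||}(\mathbb{R}^2)=BV_{H_0}(\mathbb{R}^2)$, so every slice $f(\cdot,d)$, $f(\cdot,c)$, $f(b,\cdot)$, $f(a,\cdot)$ is of bounded variation on $\mathbb{R}$; and then $\iint_R f\,dg$ exists as a consequence of the identity.

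Next I would let $a,c\to-\infty$ and $b,d\to\infty$ in the Pringsheim sense. The term $\iint_R g\,df$ tends to $\iint_{\mathbb{R}^2}g\,df$ by the definition of the improper Riemann--Stieltjes integral, whose existence in this setting is recalled just before Lemma \ref{TG1}. For the corner term, $|\Delta(fg)(R)|\le \|g\|_{\infty}\big(|f(b,d)|+|f(a,d)|+|f(b,c)|+|f(a,c)|\big)$, and each summand tends to $0$, either directly because $f\in BV_{||0||}(\mathbb{R}^2)$ vanishes as $\|(x,y)\|\to\infty$, or via Lemma \ref{TA1} together with Observation \ref{Tlim}. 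For the edge integrals I would use the estimate
\[
\Big|\int_a^b g(t_1,d)\,d_{t_1}f(t_1,d)\Big|\le \|g\|_{\infty}\,Var\big(f(\cdot,d),\mathbb{R}\big)\le \|g\|_{\infty}\,Var\big(f,\mathbb{R}\times[d,\infty)\big),
\]
which follows from Lemma \ref{TA1}(iii) and tends to $0$ as $d\to\infty$ by Observation \ref{Tlim}, together with the analogous estimates for the other three edges using Lemma \ref{TA1}(iv), (i) and (ii) respectively; all four bounds are independent of the remaining endpoints, so they are compatible with the multiple (Pringsheim) limit. It follows that $\iint_R f\,dg$ converges, i.e. the improper integral $\iint_{\mathbb{R}^2}f\,dg$ exists, and that it equals $\iint_{\mathbb{R}^2}g\,df$.

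The delicate point is the first step: pinning down the exact integration-by-parts identity for a merely bounded-variation integrator, making sure the correction terms are genuinely edge integrals of the type ``continuous function integrated against a one-dimensional bounded-variation slice'' (so that Lemma \ref{TA1} applies to them) and that no uncontrolled area term survives. Once the identity is secured with that structure, the limiting argument is routine, since every error term is dominated by $\|g\|_{\infty}$ times a half-plane Vitali variation of $f$, and Observation \ref{Tlim} forces all of them to $0$.
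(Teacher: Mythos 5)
Your proposal is correct and follows essentially the same route as the paper: the paper also starts from the two-dimensional integration-by-parts identity on a bounded rectangle (citing the Integration by Parts Theorem of M\'oricz rather than rederiving it via Abel summation), kills the corner terms using the vanishing of $f$ at infinity, bounds each edge integral by $\|g\|_{\infty}$ times a half-plane variation via Lemma \ref{TA1} and Observation \ref{Tlim}, and passes to the Pringsheim limit using the prior existence of $\iint_{\mathbb{R}^2}g\,df$. The only difference is that the paper treats the rectangle identity as a known citation, whereas you flag its derivation as the delicate step.
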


\begin{proof}
Let $R=[a,b]\times[c,d]\subset\mathbb{R}^2$ be. By Theorem \ref{CTH} and the
Integration by Parts Theorem in \cite{Moricz2}, we have that
\begin{equation}
\begin{split}
\iint_{R}g(t_1,t_2)df(t_1,t_2) =&\text{ } f(b,d)g(b,d)-f(b,c)g(b,c) \\
&-f(a,d)g(a,d)+f(a,c)g(a,c) \\
&-\int^b_a g(t_1,d)df(t_1,d)+\int^b_a g(t_1,c)df(t_1,c) \\
&-\int^d_c g(b,t_2)df(b,t_2)+\int^d_c g(a,t_2)df(a,t_2) \\
&+\iint_{R}f(t_1,t_2)dg(t_1,t_2).  \label{EA31}
\end{split}%
\end{equation}

It is immediate 
\begin{equation*}
\begin{split}
\lim \limits_{\substack{ a,c \to -\infty \\ b,d \to \infty}}f(b,d)g(b,d)=&%
\text{ } \lim \limits_{\substack{ a,c \to -\infty \\ b,d \to \infty}}%
f(b,c)g(b,c) \\
=\lim \limits_{\substack{ a,c \to -\infty \\ b,d \to \infty}}f(a,d)g(a,d)%
\text{ }&= \lim \limits_{\substack{ a,c \to -\infty \\ b,d \to \infty}}%
f(a,c)g(a,c)=0.
\end{split}%
\end{equation*}

In addition, according to Lemma \ref{TA1}, it is satisfied that 
\begin{equation*}
\begin{split}
\left|\int^b_a g(t_1,d)df(t_1,d)\right|\leq&\text{ }Var(f,\mathbb{R}\times
[d,\infty))||g||_\infty.
\end{split}%
\end{equation*}
By Observation \ref{Tlim}, it follows that 
\begin{equation*}  \label{EA231}
\lim \limits_{\substack{ a,c \to -\infty \\ b,d \to \infty}}\int^b_a
g(t_1,d)df(t_1,d)=0.
\end{equation*}
Similarly, it shows that 
\begin{equation*}
\begin{split}
\lim \limits_{\substack{ a,c \to -\infty \\ b,d \to \infty}}\int^b_a
g(t_1,c)df(t_1,c)=&\text{ } \lim \limits_{\substack{ a,c \to -\infty \\ b,d
\to \infty}}\int^d_c g(b,t_2)df(b,t_2) \\
=&\lim \limits_{\substack{ a,c \to -\infty \\ b,d \to \infty}}\int^d_c
g(a,t_2)df(a,t_2)=\text{ } 0.
\end{split}%
\end{equation*}

By \cite[Lemma 3.5]{Ghodadra}, the improper Riemann-Stieltjes integral of $g$
with respect to $f$ exists. Therefore, applying the limit in (\ref{EA31}) as 
$a,c \to -\infty$ and $b,d \to -\infty$ we conclude that 
\begin{equation*}
\iint_{\mathbb{R}^2}g(t_1,t_2)df(t_1,t_2)=\iint_{\mathbb{R}%
^2}f(t_1,t_2)dg(t_1,t_2).
\end{equation*}
\end{proof}

\begin{proposition}
\label{TY6} Let $f\in BV_{||0||}(\mathbb{R}^2)$ be. Then, for $u_1<u_2$ and $%
v_1<v_2$, 
\begin{equation*}
\begin{split}
\lim \limits_{\substack{ a,c \to -\infty \\ b,d \to \infty}} \iint_{[a,b]%
\times[c,d]}f(t_1,t_2)\left(\int^{u_2}_{u_{1}}cos(t_1\tau)d\tau\right)
\left(\int^{v_2}_{v_{1}}cos(t_2\tau)d\tau\right)d(t_1,t_2) \\
= \iint_{\mathbb{R}^2}f(t_1,t_2)d\left(\int^{u_2}_{u_{1}}\frac{sen(t_1\tau)}{%
\tau}d\tau\right)\left(\int^{v_2}_{v_{1}}\frac{sen(t_2\tau)}{\tau}%
d\tau\right).
\end{split}%
\end{equation*}
\end{proposition}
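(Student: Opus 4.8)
The plan is to recognize the two bracketed one-dimensional integrals on the left as partial derivatives of a single bounded continuous function of $(t_1,t_2)$, to rewrite the integral over a bounded rectangle by means of Lemma \ref{IIIR}, and then to let the rectangle exhaust $\mathbb{R}^2$ using Proposition \ref{TY4}.

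First I would set
\begin{equation*}
G_1(t_1)=\int_{u_1}^{u_2}\frac{sen(t_1\tau)}{\tau}\,d\tau,\qquad
G_2(t_2)=\int_{v_1}^{v_2}\frac{sen(t_2\tau)}{\tau}\,d\tau,
\end{equation*}
and $g(t_1,t_2)=G_1(t_1)G_2(t_2)$. Each $G_i$ is of class $C^1$, with $G_1'(t_1)=\int_{u_1}^{u_2}cos(t_1\tau)\,d\tau$ and $G_2'(t_2)=\int_{v_1}^{v_2}cos(t_2\tau)\,d\tau$; after the substitution $\sigma=t_1\tau$ the function $G_1$ becomes a difference of sine integrals, so $G_1$ and $G_2$ are bounded, whence $g\in C_b(\mathbb{R}^2)$. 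The continuous function $\varphi(t_1,t_2):=G_1'(t_1)G_2'(t_2)$ is precisely the factor multiplying $f$ on the left-hand side of the statement.

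Next, fix a bounded rectangle $R=[a,b]\times[c,d]$ and put
\begin{equation*}
h(t_1,t_2)=\iint_{[a,t_1]\times[c,t_2]}\varphi(s_1,s_2)\,d(s_1,s_2)=\bigl(G_1(t_1)-G_1(a)\bigr)\bigl(G_2(t_2)-G_2(c)\bigr).
\end{equation*}
Then $g-h$ is a sum of a function of $t_1$ alone, a function of $t_2$ alone, and a constant, so $\Delta g(x_i,y_j)=\Delta h(x_i,y_j)$ for every cell of every partition of $R$; hence the Riemann-Stieltjes sums of $f$ with respect to $g$ and with respect to $h$ coincide. Since $f\in BV_{||0||}(\mathbb{R}^2)=BV_{H_0}(\mathbb{R}^2)$ (Theorem \ref{CTH}) is Riemann-Stieltjes integrable with respect to the continuous function $g$ over $R$, as in the proof of Proposition \ref{TY4}, it follows that $f$ is Riemann-Stieltjes integrable with respect to $h$ over $R$ with $\iint_R f\,dg=\iint_R f\,dh$. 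As $f$ is bounded by Lemma \ref{TA1} and $\varphi$ is Riemann integrable over $R$, Lemma \ref{IIIR} now gives
\begin{equation*}
\iint_R f(t_1,t_2)\varphi(t_1,t_2)\,d(t_1,t_2)=\iint_R f(t_1,t_2)\,dh(t_1,t_2)=\iint_R f(t_1,t_2)\,dg(t_1,t_2).
\end{equation*}

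Finally I would let $a,c\to-\infty$ and $b,d\to\infty$ in the Pringsheim sense. Since $g\in C_b(\mathbb{R}^2)$ and $f\in BV_{||0||}(\mathbb{R}^2)$, Proposition \ref{TY4}, together with the limiting argument in its proof in which the boundary terms vanish by Lemma \ref{TA1} and Observation \ref{Tlim}, shows that the improper Riemann-Stieltjes integral $\iint_{\mathbb{R}^2}f\,dg$ exists, that is, $\iint_R f\,dg\to\iint_{\mathbb{R}^2}f\,dg$. Combining this with the identity above and unfolding $\varphi$ and $g$ back into the one-dimensional integrals gives the asserted equality. The one genuinely delicate point is the Riemann-Stieltjes integrability of $f$ with respect to $h$ (equivalently, with respect to a continuous function) over each bounded rectangle, which is where the Hardy structure of $f$ furnished by Theorem \ref{CTH} is used; granting this, the rest is just the factorized form of $g$ and the boundary estimates already established for Proposition \ref{TY4}.
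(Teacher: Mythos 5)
Your argument is correct and follows essentially the same route as the paper: identify the cosine product as the mixed derivative of the product of sine-integral factors, pass from $\iint_R f\varphi$ to a Riemann--Stieltjes integral via Lemma \ref{IIIR}, and let the rectangle exhaust $\mathbb{R}^2$ via Proposition \ref{TY4}. The only cosmetic difference is that you dispose of the additive terms $G_1(t_1)G_2(c)+G_1(a)G_2(t_2)-G_1(a)G_2(c)$ by noting their mixed difference $\Delta$ vanishes, whereas the paper shows the corresponding three Riemann--Stieltjes integrals exist and equal zero; the substance is identical.
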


\begin{proof}
Let $R=[a,b]\times[c,d]\subset\mathbb{R}^2$ be. By \cite[Lemma 4]{Moricz1}
we define, for each $(t_1,t_2)\in \mathbb{R}^2$, the functions 
\begin{equation*}
\begin{split}
g(t_1,t_2):=&\left(\int^{u_2}_{u_{1}}cos(t_1\tau)d\tau\right)
\left(\int^{v_2}_{v_{1}}cos(t_2\tau)d\tau\right)\in C(\mathbb{R}^2)
\end{split}%
\end{equation*}
and 
\begin{equation*}
\begin{split}
h(t_1,t_2):=&\left(\int^{u_2}_{u_{1}}\frac{sen(t_1\tau)}{\tau}d\tau\right)
\left(\int^{v_2}_{v_{1}}\frac{sen(t_2\tau)}{\tau}d\tau\right)\in C_0(\mathbb{%
R}^2),
\end{split}%
\end{equation*}
which satisfy the relation 
\begin{equation}
\begin{split}
\int^{t_1}_a\int^{t_2}_c g(x,y)dxdy=&h(t_1,t_2)-h(t_1,c)-h(a,t_2)+h(a,c).
\end{split}
\label{EA41}
\end{equation}

Since $h\in C(R)$ and $f\in BV_H(R)$, applying the Integration by Parts Theorem 
\cite{Moricz2}, we have that $\iint_Rfdh$ exists. Similary, we prove that the following integrals 
\begin{equation}
\begin{split}
\iint_{R}f(t_1,t_2)dh(t_1,c), \text{ } \iint_{R}f(t_1,t_2)dh(a,t_2) \text{
and } \iint_{R}f(t_1,t_2)dh(a,c)  \label{EA42}
\end{split}%
\end{equation}
exist and are equal to zero.\newline

From (\ref{EA41}) and Lemma \ref{IIIR}, we have that 
\begin{equation}
\begin{split}
\iint_{R}f(t_1,t_2)dh(t_1,t_2)-\iint_{R}f(t_1,t_2)dh(t_1,c)-%
\iint_{R}f(t_1,t_2)dh(a,t_2) \\
+\iint_{R}f(t_1,t_2)dh(a,c) =\iint_{R}f(t_1,t_2)g(t_1,t_2)d(t_1,t_2).
\label{EA43}
\end{split}%
\end{equation}

According to (\ref{EA42}) and (\ref{EA43}),
\begin{equation*}
\begin{split}
\iint_{R}f(t_1,t_2)g(t_1,t_2)d(t_1,t_2)=\iint_{R}f(t_1,t_2)dh(t_1,t_2).
\end{split}%
\end{equation*}

By Proposition \ref{TY4}, $\iint_{\mathbb{R}^2}fdh$ exists, then applying
the limit to the above equality as $a,c \to -\infty$ and $b,d \to \infty$ we
conclude that 
\begin{equation*}
\begin{split}
\lim \limits_{\substack{ a,c \to -\infty \\ b,d \to \infty}} \iint_{[a,b]%
\times[c,d]}f(t_1,t_2)g(t_1,t_2)d(t_1,t_2)=\iint_{\mathbb{R}%
^2}f(t_1,t_2)dh(t_1,t_2).
\end{split}%
\end{equation*}
\end{proof}

\subsection{Sequences in $\left( L \right)$}

The following definition can be found in \cite{Bary} and \cite{Moricz1}.

\begin{definition}
\label{Lacu} An increasing sequence $\left\{ u_{j}\right\}_{j\in\mathbb{N}}
\subset \mathbb{R}^{+}$ is said to satisfy \textbf{Lacunary's condition (L)}
and is denoted as $\left\{ u_{j}\right\}_{j\in\mathbb{N}} \in (L),$ if there
exists $A>1$ such that 
\begin{equation*}
u_{m}\sum_{j=m}^{\infty }\frac{1}{u_{j}}\leq A,\text{ \ \ }m=1,2,3,....
\label{4}
\end{equation*}
\end{definition}

\begin{example}
The sequence $\{2^{j}\}_{j\in \mathbb{N}}$ satisfies condition $(L)$ with $A=2$.
\end{example}

Some results of our interest associated with sequences in $(L)$ are the
following.

\begin{lemma}[{\textbf{\protect\cite[Lemma 2]{Moricz1}}}]
\label{LM1} If $\left\{ u_{j}\right\}_{j\in\mathbb{N}} \in \left( L\right) ,$
then 
\begin{equation*}
\sum_{j=1}^{\infty }\max_{u_{j-1}\leq v\leq u_{j}}\left\vert
\int_{u_{j-1}}^{v}\frac{sen (tu)}{u}du\right\vert \leq 3A+4,\text{ \ \ }%
t\neq 0,
\end{equation*}%
where $u_{0}:=0$ and $A$ is derived from Definition \ref{Lacu}.
\end{lemma}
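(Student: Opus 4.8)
The plan is to reduce the inner integrals to the sine integral $x\mapsto\int_{0}^{x}\frac{\sin r}{r}\,dr$ and then control the tail of the series using the lacunary condition $(L)$.

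First I would note that we may assume $t>0$: replacing $t$ by $-t$ changes $\sin(tu)$ into $-\sin(tu)$, so each inner integral only changes sign and its absolute value is unaffected. With the substitution $s=tu$ we get, for $0\le u_{j-1}\le v\le u_{j}$,
\[
\int_{u_{j-1}}^{v}\frac{\sin(tu)}{u}\,du=\int_{tu_{j-1}}^{tv}\frac{\sin s}{s}\,ds,
\]
so the $j$-th summand equals $\displaystyle\max_{u_{j-1}\le v\le u_{j}}\Bigl|\int_{tu_{j-1}}^{tv}\tfrac{\sin s}{s}\,ds\Bigr|$. I would record two elementary estimates, both valid for $0\le a\le b$: (a) $\bigl|\int_{a}^{b}\tfrac{\sin s}{s}\,ds\bigr|\le b-a$, since $|\sin s/s|\le 1$; and (b) if moreover $a>0$, then $\bigl|\int_{a}^{b}\tfrac{\sin s}{s}\,ds\bigr|\le \tfrac{2}{a}$, obtained by integration by parts ($\int_a^b\frac{\sin s}{s}\,ds=\frac{\cos a}{a}-\frac{\cos b}{b}-\int_a^b\frac{\cos s}{s^{2}}\,ds$) and bounding each term. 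I would also observe that $(L)$ forces $u_{j}\to\infty$ (a bounded increasing positive sequence has $\sum 1/u_{j}=\infty$, incompatible with $u_{m}\sum_{j\ge m}1/u_{j}\le A$), so there is a well-defined smallest index $m\ge 1$ with $u_{m}\ge 1/t$; then $u_{m-1}<1/t\le u_{m}$ (recall $u_{0}=0$).

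Next I would split the series as $\sum_{j=1}^{m-1}+(\text{term }j=m)+\sum_{j=m+1}^{\infty}$. For $1\le j\le m-1$, estimate (a) gives the $j$-th term $\le t(u_{j}-u_{j-1})$, and these telescope to $tu_{m-1}<1$. For $j=m$, I would split the inner integral at the point $1$: if $tv\le 1$, estimate (a) yields $\le tv-tu_{m-1}\le 1$; if $tv>1$, estimate (a) on $[tu_{m-1},1]$ together with estimate (b) on $[1,tv]$ gives $\le(1-tu_{m-1})+2\le 3$; hence the $j=m$ term is $\le 3$. Finally, for $j\ge m+1$ we have $tu_{j-1}\ge tu_{m}\ge 1$, so estimate (b) bounds the $j$-th term by $2/(tu_{j-1})$, and therefore
\[
\sum_{j=m+1}^{\infty}\max_{u_{j-1}\le v\le u_{j}}\Bigl|\int_{tu_{j-1}}^{tv}\tfrac{\sin s}{s}\,ds\Bigr|
\le\frac{2}{t}\sum_{k=m}^{\infty}\frac{1}{u_{k}}
=\frac{2}{tu_{m}}\Bigl(u_{m}\sum_{k=m}^{\infty}\frac{1}{u_{k}}\Bigr)
\le\frac{2A}{tu_{m}}\le 2A,
\]
using $(L)$ and $tu_{m}\ge 1$. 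Adding the three contributions yields the total bound $1+3+2A=2A+4\le 3A+4$ (since $A>1$), which is the asserted inequality.

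The computations are routine; the only point requiring care is the organization of the split — first recording that $(L)$ makes $u_{j}\to\infty$, so that the threshold index $m$ exists, and then treating the single ``boundary'' block $j=m$, where the integration interval straddles the scale $1/t$, so that the constants add up correctly.
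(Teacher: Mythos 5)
The paper does not prove this lemma; it is quoted verbatim from M\'oricz \cite[Lemma 2]{Moricz1}, so there is no in-paper argument to compare against. Your proof is correct and complete: the reduction to $\int_{tu_{j-1}}^{tv}\frac{\sin s}{s}\,ds$, the two estimates $\bigl|\int_a^b\frac{\sin s}{s}\,ds\bigr|\le b-a$ and $\bigl|\int_a^b\frac{\sin s}{s}\,ds\bigr|\le 2/a$, the observation that $(L)$ forces $u_j\to\infty$ so the threshold index $m$ with $u_{m-1}<1/t\le u_m$ exists, and the three-block split (telescoping head $\le 1$, boundary block $\le 3$, tail $\le 2A$ via the lacunary condition) are exactly the standard mechanism behind M\'oricz's lemma, and your bookkeeping even yields the slightly sharper constant $2A+4\le 3A+4$. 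The companion Lemma \ref{LM2} (tail bound $3A/(|t|u_m)$) is obtained by the same tail computation you carried out, so your argument in effect reproves both cited lemmas at once.
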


\begin{lemma}[{\textbf{\protect\cite[Lema 3]{Moricz1}}}]
\label{LM2} If $\left\{ u_{j}\right\}_{j\in\mathbb{N}} \in \left( L\right) ,$
then 
\begin{equation*}
\sum_{j=m+1}^{\infty }\max_{u_{j-1}\leq v\leq u_{j}}\left\vert
\int_{u_{j-1}}^{v}\frac{sen (tu)}{u}du\right\vert \leq \frac{3A}{\left\vert
t\right\vert u_{m}},\text{ \ \ }m\in\mathbb{N};\text{ }t\neq 0.
\end{equation*}
\end{lemma}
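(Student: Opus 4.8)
The plan is to control each block $\max_{u_{j-1}\le v\le u_j}\bigl|\int_{u_{j-1}}^{v}\frac{sen(tu)}{u}\,du\bigr|$ by a constant multiple of $1/(|t|\,u_{j-1})$, and then to sum the resulting shifted tail $\sum 1/u_{k}$ using Lacunary's condition $(L)$.

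First I would fix $j\ge m+1$ and $v\in[u_{j-1},u_j]$ and write the primitive of $sen(tu)/u$ explicitly by integration by parts,
\begin{equation*}
\int_{u_{j-1}}^{v}\frac{sen(tu)}{u}\,du=\left[-\frac{\cos(tu)}{tu}\right]_{u_{j-1}}^{v}-\int_{u_{j-1}}^{v}\frac{\cos(tu)}{tu^{2}}\,du .
\end{equation*}
Taking absolute values, bounding $|\cos|\le 1$ and using $\int_{u_{j-1}}^{v}u^{-2}\,du=u_{j-1}^{-1}-v^{-1}$, the $v$-dependent boundary term $1/(|t|v)$ cancels against the $v$-endpoint contribution of $\int_{u_{j-1}}^{v}u^{-2}\,du$, leaving
\begin{equation*}
\left|\int_{u_{j-1}}^{v}\frac{sen(tu)}{u}\,du\right|\le\frac{2}{|t|\,u_{j-1}},
\end{equation*}
which is uniform in $v\in[u_{j-1},u_j]$; since $m\ge 1$ one has $u_{j-1}\ge u_{m}>0$, so no division is improper. (Equivalently, Bonnet's second mean value theorem applied to the decreasing factor $1/u$ on $[u_{j-1},v]$ gives $\int_{u_{j-1}}^{v}\frac{sen(tu)}{u}\,du=\frac{1}{u_{j-1}}\int_{u_{j-1}}^{c}sen(tu)\,du$ for some $c$, and $|\int_{u_{j-1}}^{c}sen(tu)\,du|\le 2/|t|$, possibly with a different constant.) Hence $\max_{u_{j-1}\le v\le u_j}|\cdots|\le 2/(|t|\,u_{j-1})$ for every $j\ge m+1$.

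Then I would sum over $j$ and re-index by $k=j-1$,
\begin{equation*}
\sum_{j=m+1}^{\infty}\max_{u_{j-1}\le v\le u_j}\left|\int_{u_{j-1}}^{v}\frac{sen(tu)}{u}\,du\right|\le\frac{2}{|t|}\sum_{j=m+1}^{\infty}\frac{1}{u_{j-1}}=\frac{2}{|t|}\sum_{k=m}^{\infty}\frac{1}{u_{k}},
\end{equation*}
and conclude by applying Definition \ref{Lacu} in the form $\sum_{k=m}^{\infty}1/u_{k}\le A/u_{m}$, which gives the bound $2A/(|t|\,u_{m})\le 3A/(|t|\,u_{m})$. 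The only point needing care is that the per-block estimate be uniform in the inner variable $v$; writing the primitive out explicitly makes the $v$-dependence disappear after the cancellation described above, so there is no genuine obstacle beyond this bookkeeping, and the slack between $2A$ and $3A$ absorbs any cruder constant one might prefer in the per-block bound.
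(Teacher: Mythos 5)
Your proof is correct: the integration by parts yields the uniform per-block bound $2/(|t|\,u_{j-1})$ after the cancellation you describe, and condition $(L)$ then gives $2A/(|t|\,u_m)\leq 3A/(|t|\,u_m)$. The paper itself states this lemma without proof, citing \cite[Lemma 3]{Moricz1}, and your argument (a second-mean-value/integration-by-parts estimate on each block followed by summing the tail via $(L)$) is essentially the standard one given there, so there is nothing to flag.
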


\section{KP-Fourier Transform}

In this section we define a transform that generalizes the Fourier transform
operator since it can be applied to non-absolutely integrable functions. For
this, we begin defining the Kurzweil-Henstock integral over a bounded
rectangle $R=[a,b]\times[c,d]$, \cite{Lee}, \cite{Muldowney}.

\begin{itemize}
\item[\textit{i)}] Let $\delta :R\longrightarrow \mathbb{R}$ be a function. It is
said that $\delta$ is a \textbf{gauge} on $R$, if $\delta(t_1,t_2)\geq0$ for
all $(t_1,t_2)\in R$.

\item[\textit{ii)}] Let $P=\{R_{i,j}\}\in \mathcal{P}(R)$ (defined in Section 2) and $(\xi_{i,j},\eta_{i,j})%
\in R_{i,j}$. It is said that $P$ is \textbf{$\delta-$fine}, if $%
[x_{i-1},x_{i}]\subset
(\xi_{i,j}-\delta(\xi_{i,j},\eta_{i,j}),\xi_{i,j}+\delta(\xi_{i,j},%
\eta_{i,j}))$ and $[y_{j-1},y_{j}]\subset
(\eta_{i,j}-\delta(\xi_{i,j},\eta_{i,j}),\eta_{i,j}+\delta(\xi_{i,j},%
\eta_{i,j}))$.

\item[\textit{iii)}] Let $P=\{R_{i,j}\}\in \mathcal{P}(R)$ and $(\xi_{i,j},%
\eta_{i,j})\in R_{i,j}$. Given a function $f:R\longrightarrow \mathbb{R}$,
the \textbf{\ Riemann sum of $f$ over} $P$ is defined as 
\begin{equation*}
S(f;P)=\sum_{i,j}f(\xi_{i,j},\eta_{i,j})(x_{i}-x_{i-1})(y_{j}-y_{j-1}).
\end{equation*}
\end{itemize}

\begin{definition}
A function $f:R\rightarrow \mathbb{R}$ is said to be
\textbf{Kurzweil-Henstock (KH) integrable over} $R$ and we denote it by $f\in
KH(R)$, if there exists $A\in\mathbb{R}$ such that for $\varepsilon >0$,
there exists a gauge $\delta_{\varepsilon}$ on $R$ such that for $%
P=\{R_{i,j}\}\in \mathcal{P}(R)$ which is $\delta_{\varepsilon}-$fine, then 
\begin{equation*}
|S(f;P)-A|<\varepsilon .
\end{equation*}%
Moreover, $A$ is the Kurzweil-Henstock integral of $f$ and is denoted by $$%
A=\iint_{R}f(t_1,t_2)d(t_1,t_2).$$
\end{definition}

The space of KH integrable functions over compact rectangles is denoted by $%
KH_{loc}(\mathbb{R}^2)$. In \cite{Lee}, it is proved that the space of
locally Lebesgue integrable functions $L^1_{loc}(\mathbb{R}^2)$ is properly contained
in $KH_{loc}(\mathbb{R}^2)$.

\begin{definition}
\cite[Definition 4.1]{Mendoza3}\label{Definition P-Fourier} Let $f:\mathbb{R}%
^2\rightarrow\mathbb{R}$ be a function such that $f(\cdot)e^{-i<\cdot,v>}\in
KH_{loc}(\mathbb{R}^2)$. The \textbf{KP-Fourier Transform} of $f$ at $%
(\xi,\eta)\in\mathbb{R}^2$ is defined as 
\begin{equation}
\mathcal{F}(f) (\xi,\eta):=\lim \limits_{\substack{ a,c \to
-\infty \\ b,d \to \infty}} \iint_{[a,b]\times[c,d]}f(t_1,t_2)e^{-i(\xi
t_1+\eta t_2)}d(t_1,t_2),  \label{DTFI}
\end{equation}
when the limit in (\ref{DTFI}) exists in the Pringsheim sense.
\end{definition}

\begin{observation}
Let us observe that if $f\in L^1(\mathbb{R}^2)$, then the KP-Fourier
transform $\mathcal{F}(f)$ is well-defined for each $(\xi, \eta)\in\mathbb{R}%
^2$ and is equal to the Fourier transform $\hat{f}$.
\end{observation}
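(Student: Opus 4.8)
The plan is to verify the two assertions contained in the observation: that for every $(\xi,\eta)\in\mathbb{R}^2$ the multiple limit in \eqref{DTFI} exists in the Pringsheim sense, and that its value equals the classical Lebesgue Fourier transform $\hat f(\xi,\eta)=\iint_{\mathbb{R}^2}f(t_1,t_2)e^{-i(\xi t_1+\eta t_2)}\,d(t_1,t_2)$. First I would check that the expression \eqref{DTFI} is meaningful: if $f\in L^1(\mathbb{R}^2)$ then, for any $v\in\mathbb{R}^2$, the function $f(\cdot)e^{-i\langle\cdot,v\rangle}$ has modulus $|f|$, hence it lies in $L^1(\mathbb{R}^2)\subset L^1_{loc}(\mathbb{R}^2)\subset KH_{loc}(\mathbb{R}^2)$ (the last inclusion being the one recalled from \cite{Lee}, applied to the real and imaginary parts separately). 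In particular $\hat f(\xi,\eta)$ exists as an absolutely convergent Lebesgue integral for each $(\xi,\eta)$, since the integrand is dominated by $|f|\in L^1(\mathbb{R}^2)$.

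Next, on each compact rectangle $R=[a,b]\times[c,d]$ the Kurzweil–Henstock integral appearing in \eqref{DTFI} coincides with the Lebesgue integral of $f(t_1,t_2)e^{-i(\xi t_1+\eta t_2)}$ over $R$, because a Lebesgue integrable function on a bounded set is KH integrable there with the same value (again, applied to real and imaginary parts). Consequently
\[
\iint_{[a,b]\times[c,d]}f(t_1,t_2)e^{-i(\xi t_1+\eta t_2)}\,d(t_1,t_2)=\int_{[a,b]\times[c,d]}f(t_1,t_2)e^{-i(\xi t_1+\eta t_2)}\,dm(t_1,t_2),
\]
with $m$ the Lebesgue measure on $\mathbb{R}^2$. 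Then, given $\varepsilon>0$, absolute continuity of the integral of $|f|$ provides $N>0$ with $\int_{\mathbb{R}^2\setminus([-N,N]\times[-N,N])}|f|\,dm<\varepsilon$; for any $a\le -N\le N\le b$ and $c\le -N\le N\le d$ we have $[-N,N]\times[-N,N]\subset[a,b]\times[c,d]$, and therefore
\[
\left|\hat f(\xi,\eta)-\int_{[a,b]\times[c,d]}f(t_1,t_2)e^{-i(\xi t_1+\eta t_2)}\,dm(t_1,t_2)\right|\le\int_{\mathbb{R}^2\setminus([a,b]\times[c,d])}|f|\,dm\le\int_{\mathbb{R}^2\setminus([-N,N]\times[-N,N])}|f|\,dm<\varepsilon.
\]
This is exactly the statement that the limit in \eqref{DTFI} exists in the Pringsheim sense and equals $\hat f(\xi,\eta)$. (Equivalently, one could pick arbitrary sequences $a_n,c_n\to-\infty$, $b_n,d_n\to\infty$ and invoke the Dominated Convergence Theorem with dominating function $|f|$.)

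There is essentially no hard step here: the content of the observation is just the compatibility of three notions of integral. The only points deserving explicit mention are (i) that the KH and Lebesgue integrals agree on compact rectangles for $L^1$ integrands, so that the truncated integrals in \eqref{DTFI} are genuine Lebesgue integrals over expanding rectangles, and (ii) that the $L^1$-tail bound above yields convergence uniformly over all rectangles containing a fixed large square, which is precisely the Pringsheim mode of convergence required by Definition \ref{Definition P-Fourier}.
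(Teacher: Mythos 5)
Your argument is correct and is precisely the standard verification that the paper leaves implicit: the Observation is stated without proof, and your three-step chain ($L^1\subset L^1_{loc}\subset KH_{loc}$, agreement of the KH and Lebesgue integrals on compact rectangles, and the $L^1$-tail bound giving Pringsheim convergence to $\hat f(\xi,\eta)$) is exactly the compatibility argument the authors are invoking. No gaps; the explicit remark that the tail estimate is uniform over all rectangles containing $[-N,N]^2$ is the right way to pin down the Pringsheim mode of convergence required by Definition \ref{Definition P-Fourier}.
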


The following result shows that the KP-Fourier transform operator is well
defined on $BV_{||0||}(\mathbb{R}^2)$ which is not contained in $L^1(\mathbb{%
R}^2)$ according to (\ref{ENOC}).

\begin{theorem}
\label{TY2} Suppose that $f\in BV_{||0||}(\mathbb{R}^2)$. Then, for $\xi\neq0
$ and $\eta\neq0$, 
\begin{equation*}
\mathcal{F}(f) (\xi,\eta)=-\frac{1}{\xi\eta}\iint_{\mathbb{R}%
^2}e^{-i(\xi t_1+\eta t_2)}df(t_1,t_2).
\end{equation*}
\end{theorem}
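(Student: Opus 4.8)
The plan is to reproduce, for the complex exponential, the integration‑by‑parts mechanism already used in Propositions \ref{TY4} and \ref{TY6}. Fix $\xi\neq0$ and $\eta\neq0$ and set $g(t_1,t_2):=e^{-i(\xi t_1+\eta t_2)}$, a bounded continuous function on $\mathbb{R}^2$; by linearity I would treat its real and imaginary parts separately, so that Lemma \ref{IIIR}, Proposition \ref{TY4} and the Integration by Parts Theorem of \cite{Moricz2} all apply to it. The elementary input is the following identity, obtained by a direct computation: with the continuous (hence Riemann integrable) density $\psi(s_1,s_2):=-\xi\eta\,e^{-i(\xi s_1+\eta s_2)}$, one has for every compact rectangle $R=[a,b]\times[c,d]$ and every $(t_1,t_2)\in R$
\begin{equation*}
h(t_1,t_2):=\iint_{[a,t_1]\times[c,t_2]}\psi(s_1,s_2)\,d(s_1,s_2)=g(t_1,t_2)-g(t_1,c)-g(a,t_2)+g(a,c).
\end{equation*}

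Next I would fix such an $R$ and reason as in the proof of Proposition \ref{TY6}. Since $g\in C(R)$ and $f\in BV_H(R)$ (here Theorem \ref{CTH} is used), the Integration by Parts Theorem of \cite{Moricz2} gives the existence of $\iint_R f(t_1,t_2)\,dg(t_1,t_2)$. The three correction terms in the formula for $h$ are a function of $t_1$ alone, a function of $t_2$ alone, and a constant, so their mixed increments $\Delta$ vanish on every subrectangle; hence $\Delta h=\Delta g$ on each partition, so $\iint_R f\,dh$ exists and equals $\iint_R f\,dg$. As $f$ is bounded on $R$, Lemma \ref{IIIR} — applied with $\psi$ playing the role of its ``$f$'' and $f$ the role of its ``$g$'' — yields $\iint_R f\,dh=\iint_R f(t_1,t_2)\psi(t_1,t_2)\,d(t_1,t_2)=-\xi\eta\iint_R f(t_1,t_2)g(t_1,t_2)\,d(t_1,t_2)$. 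Combining the two identities gives, for every compact rectangle $R$,
\begin{equation*}
\iint_R f(t_1,t_2)e^{-i(\xi t_1+\eta t_2)}\,d(t_1,t_2)=-\frac{1}{\xi\eta}\iint_R f(t_1,t_2)\,d\!\left(e^{-i(\xi t_1+\eta t_2)}\right).
\end{equation*}

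Finally I would let $a,c\rightarrow-\infty$ and $b,d\rightarrow\infty$. By Proposition \ref{TY4} applied to the bounded continuous function $e^{-i(\xi t_1+\eta t_2)}$, the improper Riemann--Stieltjes integral $\iint_{\mathbb{R}^2} f(t_1,t_2)\,d\!\left(e^{-i(\xi t_1+\eta t_2)}\right)$ exists in the Pringsheim sense and equals $\iint_{\mathbb{R}^2}e^{-i(\xi t_1+\eta t_2)}\,df(t_1,t_2)$. Hence the right‑hand side of the displayed identity converges, so the left‑hand side does too, with limit $-\frac{1}{\xi\eta}\iint_{\mathbb{R}^2}e^{-i(\xi t_1+\eta t_2)}\,df(t_1,t_2)$; by Definition \ref{Definition P-Fourier} that limit is precisely $\mathcal{F}(f)(\xi,\eta)$, which is the asserted formula.

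The main obstacle I anticipate is bookkeeping rather than conceptual: verifying that the hypotheses of Lemma \ref{IIIR} are genuinely satisfied on each compact rectangle — especially that $f$ is Riemann--Stieltjes integrable with respect to $h$, which I reduce via $\Delta h=\Delta g$ to the existence of $\iint_R f\,dg$ supplied by the Integration by Parts Theorem of \cite{Moricz2} — and carrying complex‑valued integrands through statements phrased for real‑valued functions, handled by splitting into real and imaginary parts. The explicit evaluation of $h$ and the vanishing of the correction terms' mixed increments are routine.
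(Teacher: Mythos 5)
Your argument is correct, but it follows a genuinely different route from the paper's. The paper's proof works directly with $G(t_1,t_2)=-e^{-i(\xi t_1+\eta t_2)}/\xi\eta$ and invokes the two-dimensional Kurzweil--Henstock Integration by Parts Theorem \cite[Theorem 6.5.9]{Lee} on each compact rectangle $R$, which produces the identity (\ref{ET21}) with four corner terms and four one-dimensional edge integrals in addition to $\iint_R G\,df$; the bulk of the work is then showing that each of these eight terms tends to $0$ in the Pringsheim limit, using Lemma \ref{TA1} and Observation \ref{Tlim}, with Proposition \ref{TY4} supplying only the existence of $\iint_{\mathbb{R}^2}G\,df$. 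You instead transplant the mechanism of Proposition \ref{TY6}: you introduce the iterated primitive $h$ of the density $\psi=-\xi\eta\,g$, observe that the correction terms $g(t_1,c)$, $g(a,t_2)$, $g(a,c)$ have vanishing mixed increments so that $\Delta h=\Delta g$, and apply Lemma \ref{IIIR} to obtain the boundary-term-free identity $\iint_R fg=-\frac{1}{\xi\eta}\iint_R f\,dg$ on every compact rectangle, after which a single appeal to Proposition \ref{TY4} handles both the existence of the Pringsheim limit and the conversion of $\iint_{\mathbb{R}^2}f\,dg$ into $\iint_{\mathbb{R}^2}g\,df$. What your approach buys is that the entire asymptotic analysis of corner and edge terms is absorbed into the already-proved Proposition \ref{TY4} rather than redone; what it costs is the bookkeeping you yourself flag, namely checking the hypotheses of Lemma \ref{IIIR} (boundedness of $f$ via Lemma \ref{TA1}, Riemann--Stieltjes integrability of $f$ with respect to $h$ via $\Delta h=\Delta g$ and the existence of $\iint_R f\,dg$ from \cite{Moricz2} together with Theorem \ref{CTH}, and the splitting into real and imaginary parts) --- all of which is at the same level of rigor the paper itself accepts in Proposition \ref{TY6}. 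Both proofs are valid and yield the stated formula via Definition \ref{Definition P-Fourier}.
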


\begin{proof}

Let $R=[a,b]\times[c,d]\subset\mathbb{R}^2$ and $\xi\neq0$, $\eta\neq0$. Let 
$g(t_1,t_2)=e^{-i(\xi t_1+\eta t_2)}$ and $G(t_1,t_2)=-e^{-i(\xi t_1+\eta
t_2)}/\xi\eta$, for each $(t_1,t_2)\in\mathbb{R}^2$. By \cite[Theorem 6.5.9]%
{Lee}, we have that $fg\in KH(R)$ and 
\begin{equation}
\begin{split}
\iint_{R}f(t_1,t_2)g(t_1,t_2)d(t_1,t_2) =&
f(b,d)G(b,d)-f(b,c)G(b,c) \\
-&f(a,d)G(a,d)+f(a,c)G(a,c) \\
-&\int^b_a G(t_1,d)df(t_1,d)+\int^b_a G(t_1,c)df(t_1,c) \\
-&\int^d_c G(b,t_2)df(b,t_2)+\int^d_c G(a,t_2)df(a,t_2) \\
+&\iint_{R}G(t_1,t_2)df(t_1,t_2).  \label{ET21}
\end{split}%
\end{equation}

It is easy to prove that 
\begin{equation*}
\begin{split}
\lim \limits_{\substack{ a,c \to -\infty \\ b,d \to \infty}}f(b,d)G(b,d)=&
\lim \limits_{\substack{ a,c \to -\infty \\ b,d \to \infty}}f(b,c)G(b,c), \\
=\lim \limits_{\substack{ a,c \to -\infty \\ b,d \to \infty}}f(a,d)G(a,d)=&
\lim \limits_{\substack{ a,c \to -\infty \\ b,d \to \infty}}f(a,c)G(a,c)=0.
\end{split}%
\end{equation*}

From Lemma \ref{TA1}, 
\begin{equation*}
\begin{split}
\left|\int^b_a G(t_1,d)df(t_1,d)\right|\leq&\frac{2}{|\xi\eta|}Var(f,\mathbb{%
R}\times [d,\infty)).
\end{split}%
\end{equation*}
By Observation \ref{Tlim}, it follows that 
\begin{equation*}
\lim \limits_{\substack{ a,c \to -\infty \\ b,d \to \infty}}\int^b_a
G(t_1,d)df(t_1,d)=0.
\end{equation*}
Similarly, we have that 
\begin{equation*}
\begin{split}
\lim \limits_{\substack{ a,c \to -\infty \\ b,d \to \infty}}\int^b_a
G(t_1,c)df(t_1,c)=& \lim \limits_{\substack{ a,c \to -\infty \\ b,d \to
\infty}}\int^d_c G(b,t_2)df(b,t_2) \\
=&\lim \limits_{\substack{ a,c \to -\infty \\ b,d \to \infty}}\int^d_c
G(a,t_2)df(a,t_2)=\text{ } 0.  \label{EA222}
\end{split}%
\end{equation*}

Since the real and imaginary part of $G$ belong to $C_b(\mathbb{R}^2)$,
according to Proposition \ref{TY4}, we can claim that $\iint_{\mathbb{R}%
^2}Gdf$ exists. Therefore, applying the limit to (\ref{ET21}) as $%
a,c\rightarrow$ and $b,d\rightarrow$ we concluded that 
\begin{equation*}
\begin{split}
\lim \limits_{\substack{ a,c \to -\infty \\ b,d \to \infty}}\iint_R
f(t_1,t_2)e^{-i(\xi t_1+\eta t_2)}d(t_1,t_2)= -\frac{1}{\xi\eta}\iint_{%
\mathbb{R}^2}e^{-i(\xi t_1+\eta t_2)}df(t_1,t_2).
\end{split}%
\end{equation*}
\end{proof}

\begin{example}
Considering the function defined in the Example \ref{EFBV} and by Theorem %
\ref{TY2} we prove that, for $(\xi,\eta)\in\mathbb{R}^2$ with $\xi\neq 0$
and $\eta\neq 0$, its KP-Fourier transform is 
\begin{equation*}
\mathcal{F}(f)(\xi,\eta)=\Gamma(0,i\xi)\Gamma(0,i\eta),
\end{equation*}
where $\Gamma(\cdot,\cdot)$ is the incomplete Gamma function. Let us note
that when $\xi=0$ or $\eta=0$, $\mathcal{F}(f)(\xi,\eta)$ does not exist.
\end{example}

In \cite[Corollary 4.1]{Mendoza3} the following result is proved, however we
provide an alternative proof.

\begin{proposition}
\label{PA5} Let $f\in BV_{||0||}(\mathbb{R}^2)$ be. Then $\mathcal{F}(f)$ is
continuous at $(\xi,\eta)$ with $\xi\neq0$ and $\eta\neq0$.
\end{proposition}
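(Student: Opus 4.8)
The plan is to exploit the representation furnished by Theorem \ref{TY2}, namely $\mathcal{F}(f)(\xi,\eta)=-\frac{1}{\xi\eta}\iint_{\mathbb{R}^2}e^{-i(\xi t_1+\eta t_2)}df(t_1,t_2)$, valid for $\xi\neq0$, $\eta\neq0$. Since the prefactor $-\frac{1}{\xi\eta}$ is continuous on the set $\{\xi\neq0,\ \eta\neq0\}$, it suffices to prove that the map $(\xi,\eta)\longmapsto \iint_{\mathbb{R}^2}e^{-i(\xi t_1+\eta t_2)}df(t_1,t_2)$ is continuous there. Fix $(\xi_0,\eta_0)$ with $\xi_0\neq0$, $\eta_0\neq0$, and let $(\xi_n,\eta_n)\to(\xi_0,\eta_0)$. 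Write $g_n(t_1,t_2)=e^{-i(\xi_n t_1+\eta_n t_2)}$ and $g_0(t_1,t_2)=e^{-i(\xi_0 t_1+\eta_0 t_2)}$; each $g_n,g_0$ has real and imaginary parts in $C_b(\mathbb{R}^2)$, and $g_n\to g_0$ pointwise on $\mathbb{R}^2$ (indeed uniformly on compact sets).

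Next I would use the estimate from Lemma \ref{TG1} ii), which gives
\begin{equation*}
\left|\iint_{\mathbb{R}^2}(g_n-g_0)\,df\right|\leq \iint_{\mathbb{R}^2}|g_n-g_0|\,dV(f;t_1,t_2),
\end{equation*}
and then invoke Theorem \ref{TY1} to replace the right-hand side by an integral against the finite positive Borel measure $\mu$ associated to $V(f;\cdot,\cdot)$, i.e.
\begin{equation*}
\left|\iint_{\mathbb{R}^2}(g_n-g_0)\,df\right|\leq \iint_{\mathbb{R}^2}|g_n-g_0|\,d\mu.
\end{equation*}
Now $|g_n-g_0|\leq 2$ everywhere, $\mu$ is a finite measure, and $|g_n-g_0|\to 0$ pointwise, so the Dominated Convergence Theorem yields $\iint_{\mathbb{R}^2}|g_n-g_0|\,d\mu\to 0$. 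Hence $\iint_{\mathbb{R}^2}g_n\,df\to \iint_{\mathbb{R}^2}g_0\,df$, and multiplying by the (convergent) scalars $-\frac{1}{\xi_n\eta_n}\to-\frac{1}{\xi_0\eta_0}$ gives $\mathcal{F}(f)(\xi_n,\eta_n)\to\mathcal{F}(f)(\xi_0,\eta_0)$, which is the claimed continuity.

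The main obstacle is a bookkeeping one rather than a conceptual one: one must be careful that the measure-theoretic machinery of Theorem \ref{TY1} applies to the integrand $|g_n-g_0|$. A clean way to handle this is to note that $|g_n - g_0|$ need not vanish at infinity, so Theorem \ref{TY1} ii) is not directly applicable to it; instead one first restricts to a large rectangle $[-N,N]^2$, uses uniform convergence of $g_n$ to $g_0$ there together with Lemma \ref{TG1} ii) on that rectangle, and controls the tail by Observation \ref{Tlim} (the total variation of $f$ over $\{|t_1|\geq N\}\cup\{|t_2|\geq N\}$ tends to $0$), splitting $\varepsilon$ between the two contributions. With this $\varepsilon/2$ argument the continuity follows, and it is in fact uniform on compact subsets of $\{\xi\neq0,\ \eta\neq0\}$, which will be convenient later.
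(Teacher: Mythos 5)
Your final argument (restrict to a large compact rectangle, use uniform convergence of the exponential kernels there together with Lemma \ref{TG1} ii), and control the tail by Observation \ref{Tlim}) is correct and is essentially the paper's own proof, which obtains the convergence on each compact rectangle via a Mean Value Theorem estimate and then concludes by interchanging the two limits, using that the improper integral is uniform in $(\xi,\eta)$. You were also right to abandon your first route through Theorem \ref{TY1} and dominated convergence, since $|g_n-g_0|$ does not belong to $C_0(\mathbb{R}^2)$; the $\varepsilon/2$ splitting you substitute for it is exactly the mechanism the paper uses.
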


\begin{proof}
By Theorem \ref{TY2}, for $f\in BV_{||0||}(\mathbb{R}^2)$ and $%
(\xi_0,\eta_0)\in\mathbb{R}^2$ with $\xi_0\neq 0 \text{ and }
\eta_0\neq 0$, it is satisfied that 
\begin{equation}  \label{TTC1}
\begin{split}
\mathcal{F}(f) (\xi_0,\eta_0)=&\text{ }-\frac{1}{\xi_0\eta_0}\iint_{%
\mathbb{R}^2}cos(\xi_0 t_1+\eta_0 t_2)df(t_1,t_2) \\
&+\frac{i}{\xi_0\eta_0}\iint_{\mathbb{R}^2}sin(\xi_0 t_1+\eta_0
t_2)df(t_1,t_2).
\end{split}%
\end{equation}

Let $(\xi,\eta)\in \{(x,y)\in\mathbb{R}^2:\text{ }x\neq 0 \text{ and }
y\neq 0\}$. By Lemma \ref{TG1} and the Mean Value Theorem, we obtain the
following inequality 
\begin{multline*}
\left|\iint_{[a,b]\times[c,d]}cos(\xi t_1+\eta t_2)df(t_1,t_2)-\iint_{[a,b]%
\times[c,d]}cos(\xi_0 t_1+\eta_0 t_2)df(t_1,t_2)\right|
\end{multline*}
\begin{equation*}
\begin{split}
\leq& \text{ }|\xi-\xi_0|\iint_{[a,b]\times[c,d]}|t_1|dV(f;t_1,t_2)+|\eta-%
\eta_0|\iint_{[a,b]\times[c,d]}|t_2|dV(f;t_1,t_2).
\end{split}%
\end{equation*}
Then, for each $[a,b]\times[c,d]\subset\mathbb{R}^2,$ 
\begin{multline}  \label{TTC4}
\lim \limits_{\substack{ (\xi,\eta) \to (\xi_0,\eta_0)}}\iint_{[a,b]\times[%
c,d]}cos(\xi t_1+\eta t_2)df(t_1,t_2) \\
=\iint_{[a,b]\times[c,d]}cos(\xi_0 t_1+\eta_0 t_2)df(t_1,t_2).
\end{multline}

By Observation \ref{Tlim}, given $\varepsilon>0$ there exists $M>0$ such
that 
\begin{equation*}
Var(f,[M,\infty)\times \mathbb{R})<\varepsilon/4,
\end{equation*}
\begin{equation*}
Var(f, \mathbb{R}\times[M,\infty))<\varepsilon/4,
\end{equation*}
\begin{equation*}
Var(f,(-\infty,-M]\times \mathbb{R})<\varepsilon/4 
\end{equation*}
\begin{equation*}
\text{and }Var(f,\mathbb{R}\times(-\infty,-M])<\varepsilon/4.
\end{equation*}

Suppose that $[a,b]\times[c,d],[a_1,b_1]\times[c_1,d_1]\supset [-M,M]^2$.
Applying $ii)$ of Theorem \ref{TG1} on compact rectangles and the previous
inequalities we have that, for each $(\xi,\eta)\in \mathbb{R}^2$, 
\begin{multline*}
\left|\iint_{[a_1,b_1]\times[c_1,d_1]}cos(\xi t_1+\eta
t_2)df(t_1,t_2)-\iint_{[a,b]\times[c,d]}cos(\xi t_1+\eta
t_2)df(t_1,t_2)\right| \\
< \varepsilon.
\end{multline*}
Thus, the limit 
\begin{equation}  \label{TTC5}
\lim \limits_{\substack{ a,c \to -\infty \\ b,d \to \infty}}\iint_{[a,b]%
\times[c,d]}cos(\xi t_1+\eta t_2)df(t_1,t_2)
\end{equation}
is uniform with respect to $(\xi,\eta)\in\mathbb{R}^2$.\newline
Applying \cite[Theorem 1]{Kadelburg}, (\ref{TTC4}) and (\ref{TTC5}),
\begin{multline}  \label{TTC2}
\lim \limits_{\substack{ (\xi,\eta) \to (\xi_0,\eta_0)}}-\frac{1}{%
\xi\eta}\iint_{\mathbb{R}^2}cos(\xi t_1+\eta t_2)df(t_1,t_2) \\
=-\frac{1}{\xi_0\eta_0}\iint_{\mathbb{R}^2}cos(\xi_0 t_1+\eta_0
t_2)df(t_1,t_2).
\end{multline}

Similarly, it is proved that 
\begin{multline}  \label{TTC3}
\lim \limits_{\substack{ (\xi,\eta) \to (\xi_0,\eta_0)}}\frac{i}{%
\xi\eta}\iint_{\mathbb{R}^2}sin(\xi t_1+\eta t_2)df(t_1,t_2) \\
=\frac{i}{\xi_0\eta_0}\iint_{\mathbb{R}^2}sin(\xi_0 t_1+\eta_0
t_2)df(t_1,t_2).
\end{multline}

According to (\ref{TTC1}), (\ref{TTC2}) and (\ref{TTC3}), we conclude that $%
\mathcal{F}(f)$ is continuous at $(\xi_0,\eta_0)$.
\end{proof}

For $0<\alpha_i<\beta_i<\infty$ with $i=1,2$, we denote 
\begin{equation*}
R_{\alpha_1,\alpha_2}^{\beta_1,\beta_2}=\{(x,y)\in\mathbb{R}%
^2:\alpha_1\leq|x|\leq \beta_1, \text{ }\alpha_2\leq|y|\leq \beta_2\}
\end{equation*}
and 
\begin{equation*}
h_{\alpha_i,\beta_i}(t)=(sin(\beta_i t)-sin(\alpha_{i}t))/{\pi t}.
\end{equation*}

\begin{proposition}
\label{TY3} Suppose that $f\in BV_{||0||}(\mathbb{R}^2)$ and $(x,y)\in%
\mathbb{R}^2$. Then 
\begin{equation*}
\begin{split}
\frac{1}{4\pi^2}\underset{R_{\alpha_1,\alpha_2}^{\beta_1,\beta_2}}{\iint }\mathcal{F}%
(f)(\varepsilon,\eta)e^{i(x\varepsilon+y\eta)}d(\varepsilon,\eta)=\frac{1}{%
\pi^2}\lim \limits_{\substack{ a,c \to -\infty \\ b,d \to \infty}}
\iint_{[a,b]\times[c,d]}f(x+t_1,y+t_2) \\
\times\left(\frac{sin(\beta_1t_1)-sin(\alpha_1t_1)}{t_1}\right)\left(\frac{%
sin(\beta_2t_2)-sin(\alpha_2t_2)}{t_2}\right)d(t_1,t_2).
\end{split}%
\end{equation*}
\end{proposition}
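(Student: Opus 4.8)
The plan is to start from the definition of the KP-Fourier transform, substitute it into the truncated double Fourier integral over the annular region $R_{\alpha_1,\alpha_2}^{\beta_1,\beta_2}$, and interchange the order of integration so that the oscillatory integrals in $\varepsilon$ and $\eta$ can be evaluated explicitly. First I would write
\begin{equation*}
\mathcal{F}(f)(\varepsilon,\eta)=\lim_{\substack{a,c\to-\infty\\ b,d\to\infty}}\iint_{[a,b]\times[c,d]}f(t_1,t_2)e^{-i(\varepsilon t_1+\eta t_2)}d(t_1,t_2),
\end{equation*}
and observe that, for fixed $(\varepsilon,\eta)$ with $\varepsilon\neq0$ and $\eta\neq0$, Theorem \ref{TY2} together with Proposition \ref{TY6} (applied with the cosine kernels) gives a representation of $\mathcal{F}(f)(\varepsilon,\eta)$ as an improper Riemann--Stieltjes integral that is amenable to Fubini-type manipulations. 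Multiplying by $e^{i(x\varepsilon+y\eta)}$ and integrating $d(\varepsilon,\eta)$ over the compact region $R_{\alpha_1,\alpha_2}^{\beta_1,\beta_2}$ (a finite union of rectangles in which $\varepsilon,\eta$ stay bounded away from $0$), the factor $e^{i(x\varepsilon+y\eta-\varepsilon t_1-\eta t_2)}=e^{i(\varepsilon(x-t_1)+\eta(y-t_2))}$ appears. The region is symmetric in the signs of $\varepsilon$ and $\eta$, so integrating over it collapses the complex exponential into the product of real kernels
\begin{equation*}
\int_{\alpha_1\le|\varepsilon|\le\beta_1}e^{i\varepsilon(x-t_1)}d\varepsilon=2\left(\frac{\sin(\beta_1(x-t_1))-\sin(\alpha_1(x-t_1))}{x-t_1}\right),
\end{equation*}
and similarly in $\eta$; the factor $1/(4\pi^2)$ absorbs the two factors of $2$ and yields the $1/\pi^2$ on the right-hand side.

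The second main step is to justify the interchange of the (compact) $d(\varepsilon,\eta)$-integral with the improper $d(t_1,t_2)$-limit defining $\mathcal{F}(f)$. Here I would use Proposition \ref{TY6}: it tells us precisely that the limit
\begin{equation*}
\lim_{\substack{a,c\to-\infty\\ b,d\to\infty}}\iint_{[a,b]\times[c,d]}f(t_1,t_2)\Big(\int_{u_1}^{u_2}\cos(t_1\tau)d\tau\Big)\Big(\int_{v_1}^{v_2}\cos(t_2\tau)d\tau\Big)d(t_1,t_2)
\end{equation*}
equals the improper Stieltjes integral of $f$ against the product of the $\operatorname{sen}(t_i\tau)/\tau$ primitives. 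Taking $u_1=\alpha_1$, $u_2=\beta_1$, $v_1=\alpha_2$, $v_2=\beta_2$ and using $\int_{\alpha_i}^{\beta_i}\cos(t\tau)d\tau=(\sin(\beta_i t)-\sin(\alpha_i t))/t$, this is exactly the kernel that appears above. So the strategy is: (i) expand $\mathcal{F}(f)$ via Theorem \ref{TY2}; (ii) perform the $d(\varepsilon,\eta)$-integration, recognizing it as the double integral of cosines against the Stieltjes measure $df$; (iii) invoke Proposition \ref{TY6} to rewrite the whole expression back as a limit of Riemann integrals of $f$ against the explicit kernels; and (iv) finally shift variables $t_i\mapsto t_i-x$, $t_i\mapsto t_i-y$ (legitimate since the rectangles $[a,b]\times[c,d]$ exhaust $\mathbb{R}^2$ and $f(x+\cdot,y+\cdot)\in BV_{||0||}(\mathbb{R}^2)$) to put the argument of $f$ in the form $f(x+t_1,y+t_2)$, which produces the stated right-hand side.

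The main obstacle I anticipate is the interchange of integration in step (ii): one must integrate $\mathcal{F}(f)(\varepsilon,\eta)e^{i(x\varepsilon+y\eta)}$ over $R_{\alpha_1,\alpha_2}^{\beta_1,\beta_2}$, but $\mathcal{F}(f)$ is itself only an improper (Pringsheim) limit, so a naive Fubini argument is not available. The clean way around this is to not interchange at the level of $\mathcal{F}(f)$ at all, but rather to write $\mathcal{F}(f)(\varepsilon,\eta)=-\tfrac{1}{\varepsilon\eta}\iint_{\mathbb{R}^2}e^{-i(\varepsilon t_1+\eta t_2)}df(t_1,t_2)$ from Theorem \ref{TY2}, then use Lemma \ref{TG1}(ii) and the dominated-type estimate it provides (the total variation $V(f;\cdot)$ is a finite measure by Theorem \ref{TY1}, and $1/(\varepsilon\eta)$ is bounded on the annular region) to justify interchanging the \emph{finite} $d(\varepsilon,\eta)$-integral with the improper Stieltjes integral against $df$; the $\varepsilon,\eta$-integration of $e^{i(\varepsilon(x-t_1)+\eta(y-t_2))}$ over the symmetric region is then elementary. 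After that, re-expressing the resulting Stieltjes integral as the limit of ordinary Riemann integrals is exactly the content of Proposition \ref{TY6}, so no further delicate estimates are needed. The change of variables in step (iv) is routine and needs only the observation that translating a $BV_{||0||}$ function keeps it in $BV_{||0||}$ with the same variation.
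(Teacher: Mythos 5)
Your core computation is the one the paper uses: substitute the Pringsheim limit defining $\mathcal{F}(f)$, interchange it with the finite $d(\varepsilon,\eta)$-integral, evaluate $\iint_{R_{\alpha_1,\alpha_2}^{\beta_1,\beta_2}}e^{i(\varepsilon(x-t_1)+\eta(y-t_2))}d(\varepsilon,\eta)$ explicitly to produce $4\pi^2\,h_{\alpha_1,\beta_1}(x-t_1)h_{\alpha_2,\beta_2}(y-t_2)$, and shift variables. Where you diverge is in how the interchange is justified. The paper does it in two clean steps: Lebesgue's Dominated Convergence Theorem pulls the Pringsheim limit outside the compact $(\varepsilon,\eta)$-integral (the truncated integrals are uniformly bounded on the annulus because $|\varepsilon|,|\eta|\geq\alpha_i>0$, via Theorem \ref{TY2} and Proposition \ref{PA5}), and then classical Fubini is applied on the product of two compact rectangles, where no improper integrals remain. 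Your alternative — rewriting $\mathcal{F}(f)(\varepsilon,\eta)=-\tfrac{1}{\varepsilon\eta}\iint e^{-i(\varepsilon t_1+\eta t_2)}df$ and interchanging against the Stieltjes measure — can be made to work, and is in fact the direction the paper itself travels later in the proof of Theorem \ref{TP11}, but as written it has two loose ends.

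First, a kernel inconsistency: once you carry the factor $-1/(\varepsilon\eta)$ into the $(\varepsilon,\eta)$-integration, the symmetric annulus no longer produces $2\bigl(\sin(\beta_1(x-t_1))-\sin(\alpha_1(x-t_1))\bigr)/(x-t_1)$ as in your displayed formula; it produces $4\int_{\alpha_1}^{\beta_1}\frac{\sin(\varepsilon(x-t_1))}{\varepsilon}\,d\varepsilon$, a difference of sine integrals. That is precisely the kernel appearing on the right-hand side of Proposition \ref{TY6} (after an application of Proposition \ref{TY4} to move $df$ back to $dh$), so the route closes — but only if you invoke both \ref{TY4} and \ref{TY6}, and your displayed evaluation of $\int_{\alpha_1\leq|\varepsilon|\leq\beta_1}e^{i\varepsilon(x-t_1)}d\varepsilon$ belongs to the other route and should not be mixed in. Second, the interchange of the finite Lebesgue integral in $(\varepsilon,\eta)$ with the improper Riemann--Stieltjes integral in $(t_1,t_2)$ is not supplied by Lemma \ref{TG1}(ii), which only gives a bound; you would still need either a Fubini theorem for mixed Lebesgue/Riemann--Stieltjes double integrals or the uniform (in $(\varepsilon,\eta)$) convergence of the truncated Stieltjes integrals established in the proof of Proposition \ref{PA5}. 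The paper's DCT-plus-compact-Fubini argument sidesteps this entirely, which is why it is the shorter path.
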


\begin{proof}
Theorem \ref{TY2} and Proposition \ref{PA5} give us conditions to apply
Lebesgue's Dominated Convergence Theorem and Fubini's Theorem in the
following equalities 
\begin{multline*}
\frac{1}{4\pi^2}\underset{R_{\alpha_1,\alpha_2}^{\beta_1,\beta_2}}{\iint }\mathcal{F}%
(f)(\varepsilon,\eta)e^{i(x\varepsilon+y\eta)}d(\varepsilon,\eta) \\
=\frac{1}{4\pi^2}\lim\limits_{\substack{ a,c \to -\infty \\ b,d \to \infty}}%
\underset{R_{\alpha_1,\alpha_2}^{\beta_1,\beta_2}}{\iint}\iint_{[a,b]\times[%
c,d]} f(t_1,t_2)
e^{-i(t_1\varepsilon+t_2\eta)}d(t_1,t_2)e^{i(x\varepsilon+y\eta)}d(%
\varepsilon,\eta) \\
=\text{ }\frac{1}{4\pi^2}\lim\limits_{\substack{ a,c \to -\infty \\ b,d \to
\infty}}\iint_{[a,b]\times[c,d]}\underset{R_{\alpha_1,\alpha_2}^{\beta_1,%
\beta_2}}{\iint} f(t_1,t_2) e^{-i(t_1-x)\varepsilon}e^{-i(t_2-y)\eta}
d(\varepsilon,\eta) d(t_1,t_2) \\
= \text{ }\lim\limits_{\substack{ a,c \to -\infty \\ b,d \to
\infty}}\iint_{[a,b]\times[c,d]} f(t_1,t_2)h_{\alpha_1,\beta_1}(x-t_1)
h_{\alpha_2,\beta_2}(y-t_2) d(t_1,t_2)
\end{multline*}
\begin{multline}
= \text{ }\lim\limits_{\substack{ a,c \to -\infty \\ b,d \to
\infty}}\iint_{[a,b]\times[c,d]} f(x+\mu,y+\tau)h_{\alpha_1,\beta_1}(\mu)
h_{\alpha_2,\beta_2}(\tau) d(\mu,\tau).  \label{EQTY3}
\end{multline}

The last equality is obtained by making the change of variable $\mu=t_1-x$
and $\tau=t_2-y$.
\end{proof}

\begin{proposition}
\label{PAI} Let $f\in BV_{||0||}(\mathbb{R}^2)$ and $(x,y)\in\mathbb{R}^2$.
Then the function 
\begin{equation*}
g_{(x,y)}(t_1,t_2)=f(x-t_1,y-t_2)+f(x-t_1,y+t_2)+f(x+t_1,y-t_2)+f(x+t_1,y+t_2)
\end{equation*}
belongs to $BV_{||0||}(\mathbb{R}^2)$ and the limit 
\begin{equation}  \label{EUNI}
\lim_{a \rightarrow \infty} \underset{[0,a]\times[0,a]}{\iint }%
g_{(x,y)}(t_1,t_2)h_{\alpha_1,\beta_1}(t_1)h_{\alpha_2,%
\beta_2}(t_2)d(t_1,t_2)
\end{equation}
exists and is uniform with respect to $0<\alpha_i<\beta_i<\infty$ for $i=1,2$%
.
\end{proposition}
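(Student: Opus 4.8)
The plan is as follows. First I would dispose of the membership claim. Each of the four summands of $g_{(x,y)}$ is $f$ precomposed with one of the affine reflections $(t_1,t_2)\mapsto(x\pm t_1,y\pm t_2)$; such a map carries a partition of a compact rectangle to a partition of a compact rectangle and, after relabelling, leaves every double difference $\Delta f$ unchanged up to sign, so $Var\big(f(x\pm\cdot,\,y\pm\cdot),\mathbb{R}^2\big)=Var(f,\mathbb{R}^2)$. Since $BV_V(\mathbb{R}^2)$ is a vector space, $g_{(x,y)}\in BV_V(\mathbb{R}^2)$ with $Var(g_{(x,y)},\mathbb{R}^2)\le 4\,Var(f,\mathbb{R}^2)$; and $\|(x\pm t_1,\,y\pm t_2)\|\to\infty$ as $\|(t_1,t_2)\|\to\infty$, so each summand, hence $g_{(x,y)}$, vanishes at infinity, giving $g_{(x,y)}\in BV_{||0||}(\mathbb{R}^2)$ and, by Theorem \ref{CTH}, $g_{(x,y)}\in BV_{H_0}(\mathbb{R}^2)$. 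For the existence of the limit I would write $h_{\alpha_i,\beta_i}(t)=\tfrac1\pi\int_{\alpha_i}^{\beta_i}\cos(t\tau)\,d\tau$ and observe that $g_{(x,y)}$ and each $h_{\alpha_i,\beta_i}$ are even in every variable, so the integrand in \eqref{EUNI} is even in $t_1$ and in $t_2$ and $\iint_{[0,a]^2}=\tfrac14\iint_{[-a,a]^2}$. Applying Proposition \ref{TY6} to $g_{(x,y)}$ with $u_1=\alpha_1,u_2=\beta_1,v_1=\alpha_2,v_2=\beta_2$ gives that the improper integral over $\mathbb{R}^2$ exists in the Pringsheim sense; specialising that Pringsheim limit to the squares $[-a,a]^2$ and using the last identity yields $\lim_{a\to\infty}$ in \eqref{EUNI}, equal to $\tfrac1{4\pi^2}\iint_{\mathbb{R}^2}g_{(x,y)}\,d(\tilde\psi_1\tilde\psi_2)$ with $\tilde\psi_i(t)=\int_{\alpha_i}^{\beta_i}\tfrac{\sin(t\tau)}{\tau}\,d\tau$.

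The remaining and substantive task is uniformity in the four parameters, for which I would verify a Cauchy criterion for the net over $[0,a]^2$ that is uniform in $0<\alpha_i<\beta_i<\infty$. Fix a lacunary sequence $\{u_j\}_{j\ge1}\in(L)$ with $u_0:=0$ (say $u_j=2^{j-1}$, $A=2$), tile $[0,\infty)^2$ by $Q_{jk}=[u_{j-1},u_j]\times[u_{k-1},u_k]$, and on each $Q_{jk}$ integrate by parts in two variables — exactly as in the proofs of Proposition \ref{TY4} and Theorem \ref{TY2} — transferring $\pi^2\,h_{\alpha_1,\beta_1}\otimes h_{\alpha_2,\beta_2}$ onto its primitive $\omega_1^{(j)}(t_1)\,\omega_2^{(k)}(t_2)$, where $\omega_i^{(j)}(t)=\int_{u_{j-1}}^{t}\pi h_{\alpha_i,\beta_i}$. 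Because $\omega_i^{(j)}(u_{j-1})=0$, the only surviving boundary contributions are a single corner term at $(u_j,u_k)$ and the two edge Stieltjes integrals along $t_1=u_j$ and $t_2=u_k$, besides the principal term $\iint_{Q_{jk}}\omega_1^{(j)}\omega_2^{(k)}\,dg_{(x,y)}$. The decisive uniform estimate is $|\omega_i^{(j)}(t)|\le m_j(\beta_i)+m_j(\alpha_i)$ on $[u_{j-1},u_j]$, with $m_j(t):=\max_{u_{j-1}\le w\le u_j}\big|\int_{u_{j-1}}^{w}\tfrac{\sin(tu)}{u}\,du\big|$: Lemma \ref{LM1} gives $\sum_j m_j(t)\le 3A+4$, Lemma \ref{LM2} gives $\sum_{j>m}m_j(t)\le 3A/(|t|u_m)$, and the boundedness of the sine integral gives $\sup_{j,\,t\neq0}m_j(t)\le 2\,\mathrm{Si}(\pi)$, all independent of $t\neq0$.

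Estimating the corner term with $|g_{(x,y)}(u_j,u_k)|\le\min\{Var(g_{(x,y)},[u_j,\infty)\times\mathbb{R}),\,Var(g_{(x,y)},\mathbb{R}\times[u_k,\infty))\}$ (Lemma \ref{TA1}), the principal term with $\sum_{j,k}Var(g_{(x,y)},Q_{jk})\le Var(g_{(x,y)},\mathbb{R}^2)$, and the edge terms with the sectional-variation bounds discussed below, and using the tail estimates of Observation \ref{Tlim} together with Lemmas \ref{LM1}–\ref{LM2}, one shows that $\sum_{j,k}\big|\iint_{Q_{jk}}g_{(x,y)}\,h_{\alpha_1,\beta_1}h_{\alpha_2,\beta_2}\big|$ is finite and that its tail over $\max(j,k)\ge N$ tends to $0$, all uniformly in $\alpha_i,\beta_i$. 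Since for $u_{N-1}<a\le u_N$ the square $[0,a]^2$ differs from $\bigcup_{j,k\le N}Q_{jk}$ only by pieces lying in the $N$-th row and column — controlled by the same integration-by-parts estimate on those pieces — the uniform Cauchy criterion, hence uniform convergence, follows; by symmetry of the tiling the second-variable edge integrals are handled identically, and this argument in fact reproves existence without Proposition \ref{TY6}.

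I expect the main obstacle to be the one input fed into the edge-term estimate: the edge Stieltjes integrals require a bound, uniform in the frozen second variable and in $\alpha_i,\beta_i$, on $Var(g_{(x,y)}(\cdot,s),[M,\infty))$ and $Var(g_{(x,y)}(\cdot,s),\mathbb{R})$ for large $|s|$ (and the transposed statements), and it is here that the $BV_H$ part of the hypothesis — via Theorem \ref{CTH} — is indispensable; pure Vitali variation would not suffice. The uniform tail part is immediate from $Var(g_{(x,y)}(\cdot,s),[M,\infty))\le Var(g_{(x,y)},[M,\infty)\times\mathbb{R})+Var(g_{(x,y)}(\cdot,0),[M,\infty))$ together with Observation \ref{Tlim} and $g_{(x,y)}(\cdot,0)\in BV(\mathbb{R})$. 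For the compact part I would use $Var\big(g_{(x,y)}(\cdot,s)-g_{(x,y)}(\cdot,s'),[-M,M]\big)\le Var\big(g_{(x,y)},[-M,M]\times[\min(s,s'),\max(s,s')]\big)$, so that by Observation \ref{Tlim} the Lebesgue–Stieltjes measures $d_x g_{(x,y)}(\cdot,s)$ form a Cauchy family in total variation as $|s|\to\infty$ and hence converge in that norm; since they also converge weakly to $0$ (integration by parts, using $g_{(x,y)}(\cdot,s)\to0$ uniformly on $[-M,M]$ with uniformly bounded variation), the total-variation limit is $0$, i.e. $Var(g_{(x,y)}(\cdot,s),[-M,M])\to0$, whence $Var(g_{(x,y)}(\cdot,s),\mathbb{R})\to0$ by adding the uniform tail. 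Once these sectional estimates are available, every term produced by the grid-wise integration by parts is majorised by a finite product/convolution of the sequences $(m_j)$, the sectional variations, and the Hardy–Vitali tail-variations of $g_{(x,y)}$, each of which is bounded and tail-vanishing independently of $\alpha_i,\beta_i$.
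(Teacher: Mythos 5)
Your membership argument and your derivation of existence from Proposition \ref{TY6}/\ref{TY3} via evenness match the paper. For the uniformity, however, you take a genuinely different and much heavier route. The paper verifies a Cauchy criterion directly: for $a_2>a_1\ge\delta_0$ it splits $[0,a_2]^2\setminus[0,a_1]^2$ into two rectangles, integrates by parts \emph{once} on each, with the kernel primitive $H_{\alpha_1,\alpha_2}^{\beta_1,\beta_2}$ anchored at the origin, and uses only two facts: $|H_{\alpha_1,\alpha_2}^{\beta_1,\beta_2}|\le Si(\pi)^2$ uniformly in all four parameters, and the smallness (from $g_{(x,y)}\in BV_{||0||}(\mathbb{R}^2)$, i.e.\ Observation \ref{Tlim} and Lemma \ref{TA1}) of $|g_{(x,y)}|$ and of every Vitali or sectional variation of $g_{(x,y)}$ over regions lying beyond $\delta_0$ in at least one coordinate. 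No lacunary sequences and no Lemmas \ref{LM1}--\ref{LM2} appear; the parameter-uniformity is carried entirely by the single bound $Si(\pi)^2$. Your tiling of $[0,\infty)^2$ by lacunary blocks and the summability estimates $\sum_j m_j\le 3A+4$, $\sum_{j>m}m_j\le 3A/(|t|u_m)$ do yield a correct proof (indeed a stronger, absolutely convergent block decomposition, essentially anticipating the machinery the paper reserves for Theorems \ref{TP11} and \ref{Theo 4.2}), but the only ingredient your argument actually needs from that machinery is $\sup_j m_j\le 2\,Si(\pi)$, so the lacunary structure is superfluous here. Likewise, your ``main obstacle'' dissolves: the sectional bound $Var\bigl(g_{(x,y)}(\cdot,s),\mathbb{R}\bigr)\le Var\bigl(g_{(x,y)},\mathbb{R}\times[s,\infty)\bigr)\to 0$ follows in one line by writing each increment $g_{(x,y)}(x_i,s)-g_{(x_{i-1}},s)$ as $-\lim_{b\to\infty}\Delta g_{(x,y)}$ over $[x_{i-1},x_i]\times[s,b]$ (the same device as in Lemma \ref{TA1}, available precisely because $g_{(x,y)}$ vanishes at infinity), so the detour through total-variation Cauchy families of Lebesgue--Stieltjes measures and weak limits is unnecessary.
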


\begin{proof}
Making the change of variable $\mu=x-t_1$ and $\tau=y-t_2$ in (\ref{EQTY3})
from Proposition \ref{TY3} we have that, for $0<\alpha_i<\beta_i<\infty$%
, $i=1,2$, the limit 
\begin{equation*}
\begin{split}
\lim_{a \rightarrow \infty} \underset{[-a,a]\times[-a,a]}{\iint }%
f(x-t_1,y-t_2)h_{\alpha_1,\beta_1}(t_1)h_{\alpha_2,\beta_2}(t_2)d(t_1,t_2)
\end{split}%
\end{equation*}
exists.

Considering the appropriate change of variables and for $0<\alpha_i<\beta_i<\infty$, $i=1,2$, 
\begin{equation}  \label{ERalpha}
\begin{split}
\lim_{a \rightarrow \infty} \underset{[-a,a]\times[-a,a]}{\iint }%
f(x-t_1,y-t_2)h_{\alpha_1,\beta_1}(t_1)h_{\alpha_2,\beta_2}(t_2)d(t_1,t_2) \\
=\lim_{a \rightarrow \infty} \underset{[0,a]\times[0,a]}{\iint }%
g_{(x,y)}(t_1,t_2)h_{\alpha_1,\beta_1}(t_1)h_{\alpha_2,%
\beta_2}(t_2)d(t_1,t_2).
\end{split}%
\end{equation}

Now, let us prove that the limit in (\ref{EUNI}) is uniform. Since $g_{(x,y)}$
belongs to $BV_{||0||}(\mathbb{R}^2)$, given $\varepsilon>0$ there exists $%
\delta_0>0$ such that

\begin{itemize}
\item[\textit{i)}] If $(t_1,t_2)\in(0,\infty)^2$ with $||(t_1,t_2)||>\delta_0$,
then $|g_{(x,y)}(t_1,t_2)|<\varepsilon$, 

\item[\textit{ii)}] $Var(g_{(x,y)},[\delta_0,\infty)\times[0,\infty))<\varepsilon$, 

\item[\textit{iii)}] $Var(g_{(x,y)},[0,\infty)\times[\delta_0,\infty))<\varepsilon$.
\end{itemize}

For each $(z_1,z_2)\in\mathbb{R}^2$ and $0<\alpha_i<\beta_i<\infty$, $i=1,2$%
, we define the function 
\begin{equation*}
H_{\alpha_1,\alpha_2}^{\beta1,\beta_2}(z_1,z_2)=\int^{z_1}_0\int^{z_2}_0
h_{\alpha_1,\beta_1}(t_1)h_{\alpha_2,\beta_2}(t_2)dt_1dt_2.
\end{equation*}

Let $a_2>a_1\geq\delta_0$. Applying the Integration by Parts Theorem \cite[Theorem
6.5.9]{Lee} and the above statements, we obtain that 
\begin{multline*}
\left| \underset{[0,a_1]\times[0,a_1]}{\iint }g_{(x,y)}(t_1,t_2)h_{\alpha_1,%
\beta_1}(t_1)h_{\alpha_2,\beta_2}(t_2)d(t_1,t_2)\right. \\
\left.-\underset{[0,a_2]\times[0,a_2]}{\iint }g_{(x,y)}(t_1,t_2)h_{\alpha_1,%
\beta_1}(t_1)h_{\alpha_2,\beta_2}(t_2)d(t_1,t_2) \right|
\end{multline*}
\begin{multline*}
\leq\text{ } \left|\underset{[0,a_2]\times[a_1,a_2]}{\iint }%
g_{(x,y)}(t_1,t_2)h_{\alpha_1,\beta_1}(t_1)h_{\alpha_2,%
\beta_2}(t_2)d(t_1,t_2)\right| \\
+\left|\underset{[a_1,a_2]\times[0,a_1]}{\iint }g_{(x,y)}(t_1,t_2)h_{%
\alpha_1,\beta_1}(t_1)h_{\alpha_2,\beta_2}(t_2)d(t_1,t_2)\right|
\end{multline*}
\begin{equation*}
\begin{split}
& =\text{ }\left|H_{\alpha_1,\alpha_2}^{\beta1,%
\beta_2}(a_2,a_2)g_{(x,y)}(a_2,a_2)-H_{\alpha_1,\alpha_2}^{\beta1,%
\beta_2}(0,a_2)g_{(x,y)}(0,a_2)+H_{\alpha_1,\alpha_2}^{\beta1,%
\beta_2}(0,a_1)g_{(x,y)}(0,a_1)\right. \\
&
-H_{\alpha_1,\alpha_2}^{\beta1,\beta_2}(a_2,a_1)g_{(x,y)}(a_2,a_1)-%
\int^{a_2}_0
H_{\alpha_1,\alpha_2}^{\beta1,\beta_2}(\cdot,a_2)dg_{(x,y)}(\cdot,a_2)+%
\int^{a_2}_0
H_{\alpha_1,\alpha_2}^{\beta1,\beta_2}(\cdot,a_1)dg_{(x,y)}(\cdot,a_1) \\
& \left.-\int^{a_2}_{a_1}
H_{\alpha_1,\alpha_2}^{\beta1,\beta_2}(a_2,\cdot)dg_{(x,y)}(a_2,\cdot)+%
\int^{a_2}_{a_1}
H_{\alpha_1,\alpha_2}^{\beta1,\beta_2}(0,\cdot)dg_{(x,y)}(0,\cdot)+%
\iint_{[0,a_2]\times[a_1,a_2]}H_{\alpha_1,\alpha_2}^{\beta1,\beta_2}dg_{(x,y)}\right| \\
&
+\left|H_{\alpha_1,\alpha_2}^{\beta1,\beta_2}(a_2,a_1)g_{(x,y)}(a_2,a_1)-H_{%
\alpha_1,\alpha_2}^{\beta1,\beta_2}(a_1,a_1)g_{(x,y)}(a_1,a_1)+H_{\alpha_1,%
\alpha_2}^{\beta1,\beta_2}(a_2,0)g_{(x,y)}(a_2,0)\right. \\
&
-H_{\alpha_1,\alpha_2}^{\beta1,\beta_2}(a_1,0)g_{(x,y)}(a_1,0)-%
\int^{a_2}_{a_1}
H_{\alpha_1,\alpha_2}^{\beta1,\beta_2}(\cdot,a_1)dg_{(x,y)}(\cdot,a_1)+%
\int^{a_2}_{a_1}
H_{\alpha_1,\alpha_2}^{\beta1,\beta_2}(\cdot,0)dg_{(x,y)}(\cdot,0) \\
& \left.-\int^{a_1}_{0}
H_{\alpha_1,\alpha_2}^{\beta1,\beta_2}(a_2,\cdot)dg_{(x,y)}(a_2,\cdot)+%
\int^{a_1}_{0}
H_{\alpha_1,\alpha_2}^{\beta1,\beta_2}(a_1,\cdot)dg_{(x,y)}(a_1,\cdot)+%
\iint_{[a_1,a_2]\times[0,a_1]}H_{\alpha_1,\alpha_2}^{\beta1,\beta_2}dg_{(x,y)}\right| \\
\end{split}%
\end{equation*}
\begin{equation*}
\begin{split}
\leq & \text{ } 2Si(\pi)^2\varepsilon+Si(\pi)^2Var(g_{(x,y)}(%
\cdot,a_2),[0,a_2])+Si(\pi)^2Var(g_{(x,y)}(\cdot,a_1),[0,a_2]) \\
&+Si(\pi)^2Var(g_{(x,y)}(a_2,\cdot),[a_1,a_2])+Si(\pi)^2Var(g_{(x,y)},[0,a_2]\times[a_1,a_2]) \\
&+2Si(\pi)^2\varepsilon+Si(\pi)^2Var(g_{(x,y)}(\cdot,a_1),[a_1,a_2])+Si(%
\pi)^2Var(g_{(x,y)}(a_2,\cdot),[0,a_1]) \\
&+Si(\pi)^2Var(g_{(x,y)}(a_1,\cdot),[0,a_1])+Si(\pi)^2Var(g_{(x,y)},[a_1,a_2]\times[0,a_1]) \\
<&\text{ } 12Si(\pi)^2\varepsilon,
\end{split}%
\end{equation*}
where $Si$ is the Sine Integral function defined for instance in \cite[%
Remark 2]{Mendoza2}.\newline

Therefore, given $\varepsilon>0$, there exists $\delta_0>0$ such that 
\begin{multline*}
\left| \underset{[0,a_1]\times[0,a_1]}{\iint }g_{(x,y)}(t_1,t_2)h_{\alpha_1,%
\beta_1}(t_1)h_{\alpha_2,\beta_2}(t_2)d(t_1,t_2)\right. \\
\left.-\underset{[0,a_2]\times[0,a_2]}{\iint }g_{(x,y)}(t_1,t_2)h_{\alpha_1,%
\beta_1}(t_1)h_{\alpha_2,\beta_2}(t_2)d(t_1,t_2)\right| <\text{ }
12Si(\pi)^2\varepsilon,
\end{multline*}
for $a_2,a_1\geq\delta_0$ and $0<\alpha_i<\beta_i<\infty$ with $i=1,2$.%
\newline
\end{proof}

\section{An extension of the Dirichlet-Jordan Theorem on $BV_{||0||}(\mathbb{R}^2)$.}

The first of our mains results is a two-dimensional extension of the Dirichlet-Jordan Theorem for functions in $BV_0(\mathbb{R})$.

\begin{theorem}\label{TDJI}
Suppose that $f\in BV_{||0||}(\mathbb{R}^2)$ and $(x,y)\in \mathbb{R}^2$.
Then, 
\begin{multline}\label{TIP}
\frac{1}{4\pi^2}\lim\limits_{\substack{ \alpha_1,\alpha_2 \to 0 \\ \beta_1,\beta_2 \to \infty
}}\iint_{R_{\alpha_1,\alpha_2}^{\beta_1,\beta_2}}\mathcal{F}%
(f)(\varepsilon,\eta)e^{i(x\varepsilon+y\eta)}d(\varepsilon,\eta) \\
=\frac{f(x+,y+)+f(x+,y-)+f(x-,y+)+f(x-,y-)}{4}.
\end{multline}
\end{theorem}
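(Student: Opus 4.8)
The plan is to strip off the outer Fourier integral using Propositions~\ref{TY3} and~\ref{PAI}, reducing \eqref{TIP} to a parameter limit of a symmetrized Dirichlet-type integral of $g_{(x,y)}$, and then to evaluate that limit for each fixed truncation by a single two-dimensional integration by parts together with the bounded convergence theorem; the uniform convergence in Proposition~\ref{PAI} is precisely what makes the two limits interchangeable.

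\textbf{Step 1 (reduction).} Starting from~\eqref{EQTY3} in the proof of Proposition~\ref{TY3}, restricting the Pringsheim limit there to the squares $[-a,a]\times[-a,a]$, performing the substitution $\mu\mapsto-\mu$, $\tau\mapsto-\tau$ and using that each $h_{\alpha_i,\beta_i}$ is even, and then invoking~\eqref{ERalpha} from Proposition~\ref{PAI}, we get that for all $0<\alpha_i<\beta_i<\infty$
\begin{equation*}
\frac{1}{4\pi^{2}}\iint_{R_{\alpha_1,\alpha_2}^{\beta_1,\beta_2}}\mathcal{F}(f)(\varepsilon,\eta)e^{i(x\varepsilon+y\eta)}d(\varepsilon,\eta)
=\lim_{a\to\infty}\iint_{[0,a]\times[0,a]}g_{(x,y)}(t_{1},t_{2})\,h_{\alpha_1,\beta_1}(t_{1})\,h_{\alpha_2,\beta_2}(t_{2})\,d(t_{1},t_{2}),
\end{equation*}
and by Proposition~\ref{PAI} this last limit exists uniformly with respect to $(\alpha_1,\alpha_2,\beta_1,\beta_2)$. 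Hence, by the interchange-of-iterated-limits theorem~\cite[Theorem~1]{Kadelburg}, it suffices to prove that for each fixed $a>0$
\begin{equation*}
\lim_{\substack{\alpha_1,\alpha_2\to0\\\beta_1,\beta_2\to\infty}}\iint_{[0,a]\times[0,a]}g_{(x,y)}(t_{1},t_{2})\,h_{\alpha_1,\beta_1}(t_{1})\,h_{\alpha_2,\beta_2}(t_{2})\,d(t_{1},t_{2})=\frac{g_{(x,y)}(0+,0+)}{4},
\end{equation*}
a value independent of $a$, since $g_{(x,y)}(0+,0+)=f(x+,y+)+f(x+,y-)+f(x-,y+)+f(x-,y-)$.

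\textbf{Step 2 (evaluation for fixed $a$).} Write $g=g_{(x,y)}$, which belongs to $BV_{||0||}(\mathbb{R}^2)=BV_{H_0}(\mathbb{R}^2)$ by Proposition~\ref{PAI} and Theorem~\ref{CTH}. The function $H_{\alpha_1,\alpha_2}^{\beta_1,\beta_2}$ introduced in the proof of Proposition~\ref{PAI} is a continuous primitive of $h_{\alpha_1,\beta_1}(t_1)\,h_{\alpha_2,\beta_2}(t_2)$, is bounded by a constant independent of $\alpha_1,\alpha_2,\beta_1,\beta_2$, and vanishes on the two segments $t_1=0$ and $t_2=0$. Applying the integration by parts theorem~\cite[Theorem~6.5.9]{Lee} over $[0,a]\times[0,a]$ exactly as in the proofs of Theorem~\ref{TY2} and Proposition~\ref{PAI}, the corner terms and the two edge integrals along $t_1=0$ and $t_2=0$ drop out, and writing $H=H_{\alpha_1,\alpha_2}^{\beta_1,\beta_2}$ we obtain
\begin{align*}
\iint_{[0,a]^2}g\,h_{\alpha_1,\beta_1}(t_{1})\,h_{\alpha_2,\beta_2}(t_{2})\,d(t_{1},t_{2})
={}&g(a,a)H(a,a)-\int_0^a H(t_1,a)\,dg(t_1,a)\\
&-\int_0^a H(a,t_2)\,dg(a,t_2)+\iint_{[0,a]^2}H\,dg.
\end{align*}
Since $Si(\beta_i t)\to\pi/2$ and $Si(\alpha_i t)\to0$ for each $t>0$ as $\beta_i\to\infty$, $\alpha_i\to0$, we have $H\to\tfrac14\mathbf{1}_{(0,a]\times(0,a]}$ pointwise and boundedly on $[0,a]^2$, and likewise on each edge. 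As $g$ is of bounded variation in the Hardy sense, every Riemann--Stieltjes integral above equals the integral of a continuous function against the finite signed Borel measure generated by $g$, so the bounded convergence theorem passes to the limit in all four terms. Using that the one-sided quadrant limits of $f$ exist (because $f\in BV_H(\mathbb{R}^2)$), so $g(0+,0+)=f(x+,y+)+f(x+,y-)+f(x-,y+)+f(x-,y-)$, together with inclusion--exclusion for that signed measure, the four limits combine---the three contributions carrying $g(a,a)$, $g(0+,a)$ and $g(a,0+)$ cancelling---to give $g(0+,0+)/4$, independently of $a$. This establishes the identity claimed at the end of Step~1, and hence~\eqref{TIP}.

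\textbf{On the main difficulty.} The one genuine obstacle is the exchange of the limit $(\alpha_i\to0,\ \beta_i\to\infty)$ with the limit over the expanding squares $[0,a]\times[0,a]$: the $L^{1}$-norms of the one-dimensional Dirichlet kernels $h_{\alpha_i,\beta_i}$ grow like $\log\beta_i$, so no direct dominated-convergence argument works on $[0,\infty)\times[0,\infty)$, and one must rely on the uniform convergence of Proposition~\ref{PAI} (itself resting on the lacunary-sequence estimates of Lemmas~\ref{LM1} and~\ref{LM2}). The remaining bookkeeping of one-sided boundary values and atoms of the Stieltjes measure of $g_{(x,y)}$ in Step~2 is routine once $H_{\alpha_1,\alpha_2}^{\beta_1,\beta_2}$ is used as a primitive vanishing on both coordinate axes, since then the $a$-dependent boundary terms cancel.
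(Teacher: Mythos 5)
Your proposal is correct in substance, and it reaches the conclusion by a genuinely different route at the evaluation step. Both arguments begin identically: Proposition \ref{TY3} with \eqref{ERalpha} reduces \eqref{TIP} to $\lim_{a\to\infty}\iint_{[0,a]^2}g_{(x,y)}h_{\alpha_1,\beta_1}h_{\alpha_2,\beta_2}$, and both invoke the uniformity of Proposition \ref{PAI} together with \cite[Theorem 1]{Kadelburg} to commute the truncation parameters with $a\to\infty$. The paper then peels off one variable at a time: for fixed $a$ and fixed $(\alpha_1,\beta_1)$ it applies the one-dimensional Dirichlet--Jordan limit in $t_2$ (via Fubini and dominated convergence) to reduce to $\int_0^a \frac{g_{(x,y)}(t_1,0+)}{2}h_{\alpha_1,\beta_1}(t_1)\,dt_1$, interchanges with $a\to\infty$, and finishes with a second application of the one-dimensional result in $t_1$. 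You instead apply the interchange once to the joint four-parameter limit and evaluate the fixed-$a$ limit by a single two-dimensional integration by parts, using that $H_{\alpha_1,\alpha_2}^{\beta_1,\beta_2}$ converges boundedly to $\frac{1}{4}\mathbf{1}_{(0,a]\times(0,a]}$; I checked the telescoping of the boundary terms and it does yield $g_{(x,y)}(0+,0+)/4$, independently of $a$. Your route avoids the one-dimensional theorem entirely and makes the $a$-independence transparent, and your single application of the Moore--Osgood interchange to the joint limit is cleaner than the paper's iterated-limit bookkeeping. The price is that you rely on identifying the Riemann--Stieltjes integrals against $g_{(x,y)}$ (and against its sections) with Lebesgue--Stieltjes integrals for the finite \emph{signed} Borel measures generated by a $BV_V$ (resp.\ one-dimensional $BV$) function, together with $\mu((0,a]^2)=g(a,a)-g(0+,a)-g(a,0+)+g(0+,0+)$; the paper only establishes such a correspondence for the positive measure attached to the Vitali variation (Theorem \ref{TY1}), so this step, though standard, should be stated and justified explicitly. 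Note that it cannot be bypassed by a direct estimate on the Riemann--Stieltjes sums: the convergence of $H_{\alpha_1,\alpha_2}^{\beta_1,\beta_2}$ is not uniform near the axes, and the Vitali variation of $g_{(x,y)}$ on the strips $[0,\delta]\times[0,a]$ need not tend to $0$ as $\delta\to 0$, so the measure-theoretic bounded convergence theorem is genuinely needed there.
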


\begin{proof}
By Proposition \ref{TY3} and (\ref{ERalpha}), for $0<\alpha_i<\beta_i<%
\infty$ with $i=1,2$, 
\begin{equation}  \label{ERalpha2}
\begin{split}
\frac{1}{4\pi^2}\iint_{R_{\alpha_1,\alpha_2}^{\beta_1,\beta_2}}\mathcal{F}%
(f)(\varepsilon,\eta)e^{i(x\varepsilon+y\eta)}d(\varepsilon,\eta)=& \lim_{a
\rightarrow \infty} \underset{[0,a]\times[0,a]}{\iint }g_{(x,y)}(t_1,t_2) \\
&\times h_{%
\alpha_1,\beta_1}(t_1)h_{\alpha_2,\beta_2}(t_2)d(t_1,t_2).
\end{split}%
\end{equation}

We know that for $a>0$, the following statements are satisfied.

\begin{itemize}
\item[\textit{i)}] $\lim \limits_{\substack{ \alpha_2 \to 0 \\ \beta_2 \to \infty}}%
\int^a_0g_{(x,y)}(t_1,t_2)h_{\alpha_2,\beta_2}(t_2)dt_2=g_{(x,y)}(t_1,0+)/2$
for $t_1\in[0,a]$, 

\item[\textit{ii)}] $\left|\int^a_0g_{(x,y)}(t_1,t_2)h_{\alpha_2,\beta_2}(t_2)dt_2%
\right|\leq 2Si(\pi)\left[||g_{(x,y)}||_\infty+Var(g_{(x,y)},\mathbb{R}^2)%
\right]$ for $t_1\in[0,a]$ and $0<\alpha_2<\beta_2<\infty$, 

\item[\textit{iii)}] $\left\{\int^a_0g_{(x,y)}(\cdot,t_2)h_{\alpha_2,%
\beta_2}(t_2)dt_2:0<\alpha_2<\beta_2<\infty\right\}\subset L^1([0,a])$.
\end{itemize}

From Fubini's Theorem and Dominated Convergence Theorem we have, for $a>0$,
that 
\begin{equation}  \label{EPUNT}
\begin{split}
\lim\limits_{\substack{ \alpha_2 \to 0 \\ \beta_2 \to \infty}}\underset{[0,a]%
\times[0,a]}{\iint }g_{(x,y)}(t_1,t_2)h_{\alpha_1,\beta_1}(t_1)h_{\alpha_2,%
\beta_2}(t_2)d(t_1,t_2)= &\int^a_0 \frac{g_{(x,y)}(t_1,0+)}{2} \\
&\times h_{\alpha_1,\beta_1}(t_1)dt_1.
\end{split}%
\end{equation}
From Proposition \ref{PAI}, \cite[Theorem 1]{Kadelburg} and (\ref{EPUNT}),
it follows that 
\begin{multline}  \label{EINTER}
\lim\limits_{\substack{ \alpha_2 \to 0 \\ \beta_2 \to \infty}}%
\lim_{a\rightarrow\infty}\underset{[0,a]\times[0,a]}{\iint }%
g_{(x,y)}(t_1,t_2)h_{\alpha_1,\beta_1}(t_1)h_{\alpha_2,%
\beta_2}(t_2)d(t_1,t_2) \\
=\lim_{a\rightarrow\infty}\lim\limits_{\substack{ \alpha_2 \to 0 \\ \beta_2
\to \infty}}\underset{[0,a]\times[0,a]}{\iint }g_{(x,y)}(t_1,t_2)h_{%
\alpha_1,\beta_1}(t_1)h_{\alpha_2,\beta_2}(t_2)d(t_1,t_2).
\end{multline}
By (\ref{ERalpha2}) and (\ref{EINTER}), we obtain that 
\begin{equation*}
\begin{split}
\frac{1}{4\pi^2}\lim\limits_{\substack{ \alpha_1,\alpha_2 \to 0 \\ \beta_1,\beta_2 \to \infty
}}\underset{R_{\alpha_1,\alpha_2}^{\beta_1,\beta_2}}{\iint}\mathcal{F}%
(f)(\varepsilon,\eta)e^{i(x\varepsilon+y\eta)}d(\varepsilon,\eta)=&
\lim\limits_{\substack{ \alpha_1 \to 0 \\ \beta_1 \to \infty}}\lim\limits
_{\substack{ \alpha_2 \to 0 \\ \beta_2 \to \infty}}\lim_{a \rightarrow
\infty} \underset{[0,a]\times[0,a]}{\iint }g_{(x,y)}(t_1,t_2) \\
& \times h_{\alpha_1,\beta_1}(t_1)h_{\alpha_2,\beta_2}(t_2)d(t_1,t_2) \\
=&\lim\limits_{\substack{ \alpha_1 \to 0 \\ \beta_1 \to \infty}}%
\lim_{a\rightarrow\infty}\lim\limits_{\substack{ \alpha_2 \to 0 \\ \beta_2
\to \infty}}\underset{[0,a]\times[0,a]}{\iint }g_{(x,y)}(t_1,t_2) \\
& \times h_{\alpha_1,\beta_1}(t_1)h_{\alpha_2,\beta_2}(t_2)d(t_1,t_2) \\
=&\lim\limits_{\substack{ \alpha_1 \to 0 \\ \beta_1 \to \infty}}%
\lim_{a\rightarrow\infty}\int^a_0 \frac{g_{(x,y)}(t_1,0+)}{2}%
h_{\alpha_1,\beta_1}(t_1)dt_1 \\
=&\lim\limits_{\substack{ \alpha_1 \to 0 \\ \beta_1 \to \infty}}%
\int^{\infty}_0 \frac{g_{(x,y)}(t_1,0+)}{2}h_{\alpha_1,\beta_1}(t_1)dt_1 \\
=&\frac{g_{(x,y)}(0+,0+)}{4},
\end{split}%
\end{equation*}
where $g_{(x,y)}(0+,0+)=f(x+,y+)+f(x+,y-)+f(x-,y+)+f(x-,y-)$.
\end{proof}

Theorems 1 and 3 in \cite{Moricz2} and Theorem 2.1 in \cite{Ghodadra} claim that Fourier series of functions $f$ in $L^1(\mathbb{R}^2)\cap BV_H(\mathbb{R}^2)$ are bounded and converge locally uniform at points of continuity of $f$. Thus, the extensions of such results for functions in $BV_{||0||}(\mathbb{R}^2)$ are stated as follows. 

\begin{theorem}
\label{TP11} Let $f\in BV_{||0||}(\mathbb{R}^2)$ and $(u_n),(v_m)\in (L)$
with constants $A_1$ and $A_2$, respectively. Then, for each $(x,y)\in%
\mathbb{R}^2$, 
\begin{eqnarray*}
\sum^{\infty}_{i,j=2}\sup_{u\in[u_{i-1},u_i]}\sup_{v\in[v_{j-1}v_j]%
}\left|\frac{1}{4\pi^2}\iint_{R_{u_{i-1},v_{j-1}}^{u,v}}\mathcal{F}%
(f)(\xi,\eta)e^{i(x\xi+y\eta)}d(\xi,\eta)\right|\leq kVar(f,\mathbb{R}^2),
\end{eqnarray*}
where $k=(3A_1+4)(3A_2+4)/ \pi^2$.
\end{theorem}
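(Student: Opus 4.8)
The plan is to reduce the double-sum estimate to the one–dimensional lacunary estimates of Lemmas~\ref{LM1} and~\ref{LM2}, exactly as in the proof of Theorem~\ref{TDJI}. First I would invoke Proposition~\ref{TY3} together with the change of variables~\eqref{ERalpha} to rewrite each block integral as
\begin{equation*}
\frac{1}{4\pi^2}\iint_{R_{u_{i-1},v_{j-1}}^{u,v}}\mathcal{F}(f)(\xi,\eta)e^{i(x\xi+y\eta)}d(\xi,\eta)
=\lim_{a\to\infty}\iint_{[0,a]^2}g_{(x,y)}(t_1,t_2)h_{u_{i-1},u}(t_1)h_{v_{j-1},v}(t_2)\,d(t_1,t_2),
\end{equation*}
where $g_{(x,y)}\in BV_{\|0\|}(\mathbb{R}^2)$ by Proposition~\ref{PAI} and $h_{u_{i-1},u}(t)=(\sin(ut)-\sin(u_{i-1}t))/\pi t=\tfrac{1}{\pi}\int_{u_{i-1}}^{u}\cos(t\tau)\,d\tau$. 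The key observation is that $\int_0^{t}h_{u_{i-1},u}(s)\,ds=\tfrac1\pi\int_{u_{i-1}}^{u}\frac{\sin(t\tau)}{\tau}\,d\tau$ has its supremum over all $t$ and all $u\in[u_{i-1},u_i]$ controlled, after summation over $i$, by the quantity $\sum_i\max_{u_{i-1}\le u\le u_i}\bigl|\int_{u_{i-1}}^{u}\frac{\sin(t\tau)}{\tau}\,d\tau\bigr|\le 3A_1+4$ from Lemma~\ref{LM1}; the analogous bound holds in the second variable with constant $3A_2+4$.

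Next I would apply the Integration by Parts Theorem \cite[Theorem~6.5.9]{Lee} (as in the proof of Proposition~\ref{PAI}) to the double integral over $[0,a]^2$, writing it in terms of the iterated primitive $H_{u_{i-1},v_{j-1}}^{u,v}(z_1,z_2)=\int_0^{z_1}\int_0^{z_2}h_{u_{i-1},u}(t_1)h_{v_{j-1},v}(t_2)\,dt_1dt_2$ and boundary/Stieltjes terms involving $g_{(x,y)}$. Since $g_{(x,y)}\in BV_{\|0\|}(\mathbb{R}^2)$, the boundary terms at $z_1=a$ or $z_2=a$ vanish as $a\to\infty$ (using Lemma~\ref{TA1} and Observation~\ref{Tlim}), and what survives is a bound of the form
\begin{equation*}
\Bigl|\tfrac{1}{4\pi^2}\iint_{R_{u_{i-1},v_{j-1}}^{u,v}}\mathcal{F}(f)(\xi,\eta)e^{i(x\xi+y\eta)}\,d(\xi,\eta)\Bigr|
\le P_i\,Q_j\,\bigl(\text{a variation quantity of }g_{(x,y)}\text{ over the }(i,j)\text{ block}\bigr),
\end{equation*}
where $P_i=\max_{u_{i-1}\le u\le u_i}\tfrac1\pi\bigl|\int_{u_{i-1}}^{u}\tfrac{\sin(t_1\tau)}{\tau}\,d\tau\bigr|$ and $Q_j$ is its analogue. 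Taking the supremum over $u\in[u_{i-1},u_i]$, $v\in[v_{j-1},v_j]$ inside the sum and using that the variation quantities of $g_{(x,y)}$ over the grid $\{[u_{i-1},u_i]\times[v_{j-1},v_j]\}$ sum to at most $Var(g_{(x,y)},[0,\infty)^2)$, while the latter is bounded by $4\,Var(f,\mathbb{R}^2)$ (by the four translated/reflected copies defining $g_{(x,y)}$ and subadditivity of Vitali variation), the double sum telescopes through the product structure into $\bigl(\sum_i P_i\bigr)\bigl(\sum_j Q_j\bigr)\cdot 4\,Var(f,\mathbb{R}^2)\le (3A_1+4)(3A_2+4)\cdot\tfrac{4}{\pi^2}\,Var(f,\mathbb{R}^2)$. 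A small bookkeeping point: the stated constant $k=(3A_1+4)(3A_2+4)/\pi^2$ suggests the factor $4$ from the four copies of $f$ is absorbed into the normalization, which I would track carefully when matching the $1/4\pi^2$ prefactor with the $1/\pi^2$ appearing after the change of variables in Proposition~\ref{TY3}.

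The main obstacle will be handling the product (tensor) structure of the double sum together with the two-dimensional integration by parts: one must organize the Stieltjes terms so that the one–variable lacunary maxima $P_i$ and $Q_j$ factor cleanly and so that only a single copy of the Vitali variation of $g_{(x,y)}$ appears in each block, rather than an uncontrolled product of one–dimensional variations. The resolution is that, for a fixed pair $(i,j)$, the surviving term after letting $a\to\infty$ is $\iint_{[u_{i-1},\,?]\times[v_{j-1},\,?]}H\,dg_{(x,y)}$-type contributions whose modulus is at most $P_iQ_j\,Var(g_{(x,y)},[u_{i-1},u_i]\times[v_{j-1},v_j])$ — here the crucial estimate is $\sup_t\bigl|\int_0^t h_{u_{i-1},u}\bigr|\le P_i$ and the fact that $Var$ over disjoint grid rectangles is additive (Definition~\ref{DFVA}), so that summing over $i,j$ separates into the product of the two Lemma~\ref{LM1} bounds times the total variation. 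Once this separation is justified, the rest is the routine chain of estimates already used in Propositions~\ref{TY3} and~\ref{PAI}.
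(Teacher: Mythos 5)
Your opening moves are in the right spirit and partially match the paper: the paper also starts from Proposition \ref{TY3}, and its Propositions \ref{TY4} and \ref{TY6} perform exactly the integration by parts you sketch, arriving at
\begin{equation*}
A_{i,j}(u,v)=\frac{1}{\pi^2}\iint_{\mathbb{R}^2}\Bigl(\int_{u_{i-1}}^{u}\frac{\sin(t_1\tau)}{\tau}\,d\tau\Bigr)\Bigl(\int_{v_{j-1}}^{v}\frac{\sin(t_2\tau)}{\tau}\,d\tau\Bigr)\,df(x+t_1,y+t_2),
\end{equation*}
without any folding to $[0,\infty)^2$ (so the factor $4$ you worry about never appears). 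The genuine gap is in how you close the summation over $i,j$. Your proposed block estimate
$|A_{i,j}(u,v)|\le P_iQ_j\cdot Var\bigl(g_{(x,y)},[u_{i-1},u_i]\times[v_{j-1},v_j]\bigr)$
does not make sense: the indices $i,j$ label \emph{frequency} blocks, while the $t$-integral runs over the whole quadrant for every $(i,j)$, so there is no spatial $(i,j)$ block over which to take a variation, and the additivity of $Var$ over a grid cannot be brought to bear. If instead you make $P_i$ a genuine constant by taking $P_i=\sup_{t_1}\max_{u}\frac{1}{\pi}\bigl|\int_{u_{i-1}}^{u}\frac{\sin(t_1\tau)}{\tau}d\tau\bigr|$, then $\sum_i P_i$ diverges: Lemma \ref{LM1} bounds $\sum_i\max_u|\cdots|$ only for each \emph{fixed} $t\neq 0$, and for each $i$ one can choose $t_1\approx 1/u_{i-1}$ to make the $i$-th term bounded below by a positive constant. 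So the supremum over $t$ cannot be pulled outside the sum.

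The missing idea is the mechanism that lets you keep the $t$-dependence inside and still interchange $\sum_{i,j}$ with the integral. After bounding $|A_{i,j}(u,v)|$ by the Riemann--Stieltjes integral against $dV_{x,y}(f;t_1,t_2)$ via Lemma \ref{TG1}\,ii), the paper invokes Theorem \ref{TY1} (the Riesz representation step) to replace that Stieltjes integral by a Lebesgue integral against a positive finite measure $\mu_{x,y}$ with $\mu_{x,y}(\mathbb{R}^2)\le Var(f,\mathbb{R}^2)$. For a positive measure and nonnegative integrands the sum--integral interchange is free (Tonelli), and Lemma \ref{LM1} is then applied \emph{pointwise in $(t_1,t_2)$ under the integral sign}, yielding $\iint(3A_1+4)(3A_2+4)\,d\mu_{x,y}\le(3A_1+4)(3A_2+4)Var(f,\mathbb{R}^2)$. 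Without this step (or an equivalent justification for summing infinitely many Stieltjes estimates), your argument does not close.
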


\begin{proof}
Let $(x,y)\in\mathbb{R}^2$ be. For $u\in[u_{i-1},u_i]$, $v\in[v_{j-1},v_j]$
with $i,j\geq 2$, let 
\begin{eqnarray*}
A_{i,j}(u,v):=\frac{1}{4\pi^2}\iint_{R_{u_{i-1},v_{j-1}}^{u,v}}\mathcal{F}%
(f)(\xi,\eta)e^{i(x\xi+y\eta)}d(\xi,\eta).
\end{eqnarray*}

By Proposition \ref{TY3}, we have that 
\begin{equation*}
\begin{split}
A_{i,j}(u,v)=&\frac{1}{\pi^2}\lim \limits_{\substack{ a,c \to -\infty \\ b,d
\to \infty}} \iint_{[a,b]\times[c,d]}f(x+t_1,y+t_2)\left(%
\int^u_{u_{i-1}}cos(t_1\tau)d\tau\right) \\
&\times\left(\int^v_{v_{j-1}}cos(t_2\tau)d\tau\right) d(t_1,t_2).
\end{split}%
\end{equation*}
Since $f(x+\cdot,y+\cdot)\in BV_{||0||}(\mathbb{R}^2)$ and from
Propositions \ref{TY4} and \ref{TY6}, we obtain that
\begin{eqnarray*}
A_{i,j}(u,v)=\frac{1}{\pi^2}\iint_{\mathbb{R}^2}f(x+t_1,y+t_2)d\left(%
\int^u_{u_{i-1}}\frac{sin(t_1\tau)}{\tau}d\tau\right)\left(\int^v_{v_{j-1}}%
\frac{sin(t_2\tau)}{\tau}d\tau\right) \\
=\frac{1}{\pi^2}\iint_{\mathbb{R}^2}\left(\int^u_{u_{i-1}}\frac{sin(t_1\tau)%
}{\tau}d\tau\right)\left(\int^v_{v_{j-1}}\frac{sin(t_2\tau)}{\tau}%
d\tau\right)df(x+t_1,y+t_2).
\end{eqnarray*}
Now, $ii)$ of Lemma \ref{TG1} assures that 
\begin{equation}  \label{eq0}
\begin{split}
|A_{i,j}(u,v)|\leq& \text{ }\frac{1}{\pi^2}\iint_{\mathbb{R}%
^2}\left|\int^u_{u_{i-1}}\frac{sin(t_1\tau)}{\tau}d\tau\right|\left|%
\int^v_{v_{j-1}}\frac{sin(t_2\tau)}{\tau}d\tau\right|dV_{x,y}(f;t_1,t_2) \\
=:&B(u,v),
\end{split}%
\end{equation}
where $V_{x,y}(f;t_1,t_2)=Var(f,(-\infty,x+t_1]\times(-\infty,x+t_2])$.%
\newline

\noindent Applying Theorem \ref{TY1}, there exists a $\sigma-$algebra $%
\mathbb{M}$ of $\mathbb{R}^2$ and a measure $\mu_{x,y}$ on $\mathbb{M}$
such that 
\begin{equation}
\begin{split}
B(u,v)=& \text{ }\frac{1}{\pi^2}\iint_{\mathbb{R}^2}\left|\int^u_{u_{i-1}}%
\frac{sin(t_1\tau)}{\tau}d\tau\right|\left|\int^v_{v_{j-1}}\frac{sin(t_2\tau)%
}{\tau}d\tau\right|d\mu_{x,y}(t_1,t_2) \\
\leq& \text{ }\frac{1}{\pi^2}\iint_{\mathbb{R}^2}\left(\max_{u\in
[u_{i-1},u_i]}\left|\int^u_{u_{i-1}}\frac{sin(t_1\tau)}{\tau}%
d\tau\right|\right) \\
&\times\left(\max_{v\in [v_{j-1},v_j]}\left|\int^v_{v_{j-1}}\frac{%
sin(t_2\tau)}{\tau}d\tau\right|\right)d\mu_{x,y}(t_1,t_2).
\end{split}
\label{eq1}
\end{equation}
From (\ref{eq0}) and (\ref{eq1}), for each $u\in[u_{i-1},u_i]$ and $v\in[v_{j-1},v_j]$, the following is satisfied
\begin{equation*}
\begin{split}
|A_{i,j}(u,v)|\leq &\text{ } \frac{1}{\pi^2}\iint_{\mathbb{R}%
^2}\left(\max_{u\in [u_{i-1},u_i]}\left|\int^u_{u_{i-1}}\frac{sin(t_1\tau)}{%
\tau}d\tau\right|\right) \\
&\times\left(\max_{v\in [v_{j-1},v_j]}\left|\int^v_{v_{j-1}}\frac{%
sin(t_2\tau)}{\tau}d\tau\right|\right)d\mu_{x,y}(t_1,t_2).
\end{split}%
\end{equation*}

We define, for each $i,j\geq 2$, 
\begin{eqnarray*}
M_{i,j}(f;x,y):=\sup_{u\in[u_{i-1},u_i]}\sup_{v\in[v_{j-1}v_j]%
}\left|A_{i,j}(u,v)\right|.
\end{eqnarray*}
One can prove, for each $i,j\geq 2$, 
\begin{equation*}
\begin{split}
M_{i,j}(f;x,y)\leq& \text{ } \frac{1}{\pi^2}\iint_{\mathbb{R}%
^2}\left(\max_{u\in [u_{i-1},u_i]}\left|\int^u_{u_{i-1}}\frac{sin(t_1\tau)}{%
\tau}d\tau\right|\right) \\
&\times\left(\max_{v\in [v_{j-1},v_j]}\left|\int^v_{v_{j-1}}\frac{%
sin(t_2\tau)}{\tau}d\tau\right|\right)d\mu_{x,y}(t_1,t_2).  \label{eqM}
\end{split}%
\end{equation*}

Considering the above inequalities and by Lemma \ref%
{LM1}, for $i,j\geq 2$, we obtain that

\begin{equation}  \label{ECP}
\begin{split}
\sum^{\infty}_{i=2}\sum^{\infty}_{j=2}M_{i,j}(f;x,y)\leq& \text{ } \frac{1}{%
\pi^2}\sum^{\infty}_{i=2}\sum^{\infty}_{j=2}\iint_{\mathbb{R}%
^2}\left(\max_{u\in [u_{i-1},u_i]}\left|\int^u_{u_{i-1}}\frac{sin(t_1\tau)}{%
\tau}d\tau\right|\right) \\
&\times\left(\max_{v\in [v_{j-1},v_j]}\left|\int^v_{v_{j-1}}\frac{%
sin(t_2\tau)}{\tau}d\tau\right|\right)d\mu_{x,y}(t_1,t_2) \\
=& \text{ } \frac{1}{\pi^2}\iint_{\mathbb{R}^2}\left(\sum^{\infty}_{i=2}%
\max_{u\in [u_{i-1},u_i]}\left|\int^u_{u_{i-1}}\frac{sin(t_1\tau)}{\tau}%
d\tau\right|\right) \\
&\times\left(\sum^{\infty}_{j=2}\max_{v\in
[v_{j-1},v_j]}\left|\int^v_{v_{j-1}}\frac{sin(t_2\tau)}{\tau}%
d\tau\right|\right)d\mu_{x,y}(t_1,t_2) \\
\leq&\text{ } \frac{1}{\pi^2}\iint_{\mathbb{R}^2}(3A_1+4)(3A_2+4)d%
\mu_{x,y}(t_1,t_2) \\
=& \text{ }\frac{1}{\pi^2}(3A_1+4)(3A_2+4)Var(f,\mathbb{R}^2).
\end{split}%
\end{equation}
\end{proof}

\begin{theorem}
\label{Theo 4.2} Let $f\in BV_{||0||}(\mathbb{R}^2)$ and $(x,y)\in\mathbb{R}%
^2$ a point of continuity of $f$. If $(u_n),(v_m)\in (L)$ with constants $A_1
$ y $A_2$, respectably, then the series 
\begin{eqnarray*}
\sum^{\infty}_{i=2}\sum^{\infty}_{j=2}\sup_{u\in[u_{i-1},u_i]}\sup_{v\in[%
v_{j-1}v_j]}\left|\frac{1}{4\pi^2}\iint_{R_{u_{i-1},v_{j-1}}^{u,v}}\mathcal{F}%
(f)(\xi,\eta)e^{i(\cdot \xi+\cdot \eta)}d(\xi,\eta)\right|
\end{eqnarray*}
converges locally uniform at $(x,y)$.
\end{theorem}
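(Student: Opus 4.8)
The plan is to return to the estimate that appears inside the proof of Theorem~\ref{TP11} and to make its remainder small \emph{uniformly} for the base point $(x',y')$ in a neighbourhood of the given continuity point $(x,y)$; the only genuinely new ingredient is the local control of the variation supplied by Lemma~\ref{TG2}. Writing $a_i(t)=\max_{u\in[u_{i-1},u_i]}\bigl|\int_{u_{i-1}}^{u}\frac{\sin(t\tau)}{\tau}\,d\tau\bigr|$ and $b_j(t)$ symmetrically, the computation in Theorem~\ref{TP11} already gives, for each $(x',y')$,
\[
\sup_{u\in[u_{i-1},u_i]}\ \sup_{v\in[v_{j-1},v_j]}\Bigl|\tfrac{1}{4\pi^{2}}\!\iint_{R_{u_{i-1},v_{j-1}}^{u,v}}\!\mathcal{F}(f)(\xi,\eta)e^{i(x'\xi+y'\eta)}\,d(\xi,\eta)\Bigr|\ \le\ \tfrac1{\pi^{2}}\iint_{\mathbb{R}^{2}}a_i(t_1)\,b_j(t_2)\,d\mu_{x',y'}(t_1,t_2),
\]
with $\mu_{x',y'}$ the positive finite measure attached by Theorem~\ref{TY1} to $V\!\bigl(f(x'+\cdot,y'+\cdot);\cdot,\cdot\bigr)$, so that $\mu_{x',y'}\bigl((c,d)\times(e,g)\bigr)\le Var\bigl(f,[x'+c,x'+d]\times[y'+e,y'+g]\bigr)$ and $\mu_{x',y'}(\mathbb{R}^{2})\le Var(f,\mathbb{R}^{2})$. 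Summing over $i,j$ and using Lemma~\ref{LM1} reproves the uniform bound $\frac{(3A_1+4)(3A_2+4)}{\pi^{2}}Var(f,\mathbb{R}^{2})$ for the whole series, so everything reduces to showing that the remainder $\sum_{i>I}\sum_{j\ge2}(\cdots)+\sum_{2\le i\le I}\sum_{j>J}(\cdots)$ of the double series on the right tends to $0$ as $I,J\to\infty$, uniformly on a fixed ball around $(x,y)$.

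For the localisation I would, given $\varepsilon>0$, use Lemma~\ref{TG2} to pick $\sigma\in(0,1]$ with $Var\bigl(f,[x-2\sigma,x+2\sigma]\times[y-2\sigma,y+2\sigma]\bigr)<\varepsilon$, so that $\mu_{x',y'}\bigl((-\sigma,\sigma)^{2}\bigr)<\varepsilon$ whenever $|x'-x|<\sigma$, $|y'-y|<\sigma$, and Observation~\ref{Tlim} to pick $R\ge1$ so that the $\mu_{x',y'}$–mass of $\{|t_1|\ge R\}\cup\{|t_2|\ge R\}$ is $<4\varepsilon$ for $(x',y')$ in the unit ball about $(x,y)$. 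On $(-\sigma,\sigma)^{2}$ and on $\{|t_1|\ge R\}\cup\{|t_2|\ge R\}$ I use only the crude bounds $\sum_i a_i\le 3A_1+4$, $\sum_j b_j\le 3A_2+4$ (Lemma~\ref{LM1}), so those regions contribute $O(\varepsilon)$ uniformly. On the part of the bounded annulus $\{|t_1|<R,\,|t_2|<R\}\setminus(-\sigma,\sigma)^{2}$ lying in $\{|t_1|\ge\sigma\}$, Lemma~\ref{LM2} gives $\sum_{i>I}a_i(t_1)\le 3A_1/(\sigma u_I)$, so that piece of $\sum_{i>I}\sum_{j\ge2}$ is at most $\frac{3A_1(3A_2+4)}{\pi^{2}\sigma u_I}Var(f,\mathbb{R}^{2})\to0$; symmetrically, the part lying in $\{|t_2|\ge\sigma\}$ of $\sum_{2\le i\le I}\sum_{j>J}$ is controlled by $\sum_{j>J}b_j(t_2)\le 3A_2/(\sigma v_J)\to0$.

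The main obstacle is the block $\sum_{i>I}\sum_{j\ge2}(\cdots)$ integrated over the ``horizontal slab'' $\{|t_1|<\sigma,\ \sigma\le|t_2|<R\}$ (and, symmetrically, $\sum_{2\le i\le I}\sum_{j>J}$ over the vertical slab): there $\sum_{i>I}a_i(t_1)$ can only be bounded by $3A_1+4$ because $t_1$ is not bounded away from $0$, $\sum_{j\ge2}b_j(t_2)$ only by $3A_2+4$, and the $\mu_{x',y'}$–mass of the slab only by $Var(f,[x'-\sigma,x'+\sigma]\times\mathbb{R})$, which continuity of $f$ at a single point does not force to be small; so the crude product estimate must be replaced there by a finer one. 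My suggestion is to avoid passing to $\mu_{x',y'}$ on the slab and instead keep the oscillatory structure of the inner integral: for $i>I$ large, $\int_{u_{i-1}\le|\xi|\le u}\mathcal{F}(f)(\xi,\eta)e^{ix'\xi}\,d\xi$ is a high–frequency Dirichlet–type integral, and an integration by parts in $\xi$ against the partial transform $\eta\mapsto\mathcal{F}(f)(\xi,\eta)$, combined with the uniform–in–$(\alpha,\beta)$ convergence of Proposition~\ref{PAI} applied in the surviving variable, should yield an $o(1)$ bound as $I\to\infty$ uniform in $\eta$ over the slab and in $(x',y')$ near $(x,y)$; only afterwards does one sum in $j$ and integrate in $\eta$. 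Once the slabs are dealt with, collecting the finitely many $O(\varepsilon)$ contributions gives local uniform convergence at $(x,y)$, with the value of the sum bounded as in Theorem~\ref{TP11}.
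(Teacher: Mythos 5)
Your reduction to the double series $\frac{1}{\pi^{2}}\sum_{i,j}\iint_{\mathbb{R}^{2}}a_i(t_1)b_j(t_2)\,d\mu_{x',y'}(t_1,t_2)$ via Theorem \ref{TY1}, and your use of Lemma \ref{TG2} (with the $2\sigma$ trick) to make $\mu_{x',y'}\bigl((-\sigma,\sigma)^{2}\bigr)<\varepsilon$ uniformly for $(x',y')$ near the continuity point, are exactly the paper's steps. Where you diverge --- and where the gap lies --- is in which remainder you set out to control. You aim at the L-shaped remainder $\sum_{i>I}\sum_{j\ge 2}+\sum_{2\le i\le I}\sum_{j>J}$, and you correctly diagnose that the product estimate cannot close it: on the slab $\{|t_1|<\sigma,\ |t_2|\ge\sigma\}$ the block $\sum_{i>I}\sum_{j\ge2}$ has no small factor, since $\sum_{i>I}a_i(t_1)$ is only $O(1)$ when $t_1$ is near $0$, $\sum_{j\ge2}b_j(t_2)$ is a full sum rather than a tail, and the $\mu_{x',y'}$-mass of the slab is controlled only by $Var(f,[x'-\sigma,x'+\sigma]\times\mathbb{R})$, which continuity at a single point does not force to be small. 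Your proposed repair (integration by parts in $\xi$ against $\mathcal{F}(f)(\cdot,\eta)$, invoking Proposition \ref{PAI} in the surviving variable) is not carried out, and nothing in the paper supplies the uniform-in-$\eta$ decay it would require; as written this is a genuine hole, not a routine verification.

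The paper's proof never meets the slab because it estimates only the corner tail $\sum_{i=n+1}^{\infty}\sum_{j=m+1}^{\infty}M_{i,j}(f;x',y')$, in which \emph{both} index sums are tails. Then each of the four regions obtained by cutting $\mathbb{R}^{2}$ at $|t_1|=\delta_1$ and $|t_2|=\delta_1$ carries at least one small factor: the central square has small $\mu_{x',y'}$-mass by Lemma \ref{TG2}; wherever $|t_1|\ge\delta_1$, Lemma \ref{LM2} gives $\sum_{i>n}a_i(t_1)\le 3A_1/(\delta_1 u_n)$; wherever $|t_2|\ge\delta_1$, it gives $\sum_{j>m}b_j(t_2)\le 3A_2/(\delta_1 v_m)$; Lemma \ref{LM1} supplies the crude bound for the remaining factor and $\mu_{x',y'}(\mathbb{R}^{2})\le Var(f,\mathbb{R}^{2})$ finishes (no truncation at a radius $R$ is needed). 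In particular, your horizontal slab lies in the region where the $t_2$-tail factor $3A_2/(\delta_1 v_m)$ does the work --- but only because the $j$-sum there is a tail, which it is not in your decomposition. Your instinct that uniform convergence of a double series should be measured by the L-shaped remainder is reasonable, but that is a strictly stronger assertion than what the paper proves; either adopt the corner-tail formulation, as the paper (following M\'oricz and Ghodadra--F\"ul\"op) does, or supply an actual argument for the slabs. As it stands, your proof does not close.
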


\begin{proof}
Let $n,m\in\mathbb{N}$ be. From (\ref{ECP}), for $(x,y)\in%
\mathbb{R}^2$, the following holds 
\begin{equation*}
\begin{split}
\sum^{\infty}_{i=n+1}\sum^{\infty}_{j=m+1}M_{i,j}(f;x,y)\leq \frac{1}{\pi^2}%
\iint_{\mathbb{R}^2}\left(\sum^{\infty}_{i=n+1}\max_{u\in
[u_{i-1},u_i]}\left|\int^u_{u_{i-1}}\frac{sin(t_1\tau)}{\tau}%
d\tau\right|\right) \\
\times\left(\sum^{\infty}_{j=m+1}\max_{v\in
[v_{j-1},v_j]}\left|\int^v_{v_{j-1}}\frac{sin(t_2\tau)}{\tau}%
d\tau\right|\right)d\mu_{x,y}(t_1,t_2),  \label{eq2}
\end{split}%
\end{equation*}
where $\mu_{x,y}$ is a measure satisfying 
\begin{equation}  \label{ED1}
\mu_{x,y}((a,b)\times (c,d))\leq Var(f,[a,b]\times [c,d]+(x,y)) 
\end{equation}
\begin{equation}  \label{ED2}
\text{ and  } \mu_{x,y}(\mathbb{R}^2)\leq Var(f,\mathbb{R}^2).
\end{equation}

Suppose that $(x_0,y_0)\in\mathbb{R}^2$ is a point of continuity of $f$ and $%
\varepsilon>0$. By Lemma \ref{TG2}, there exits $\delta>\delta_0=\delta/2>0$%
, such that 
\begin{equation*}
Var(f,[x_0-2\delta_0,x+2\delta_0]\times[y_0-2\delta_0,y_0+2\delta_0]%
)<\varepsilon.
\end{equation*}
Now, if $(x^\prime,y^\prime)\in(x_0-\delta_0,x_0+\delta_0)\times(y_0-%
\delta_0,y_0+\delta_0)$, we can claim that 
\begin{equation*}
\begin{split}
[x^\prime-\delta_0,x^\prime+\delta_0]\times[y^\prime-\delta_0,y^\prime+%
\delta_0]\subset[x_0-2\delta_0,x_0+2\delta_0]\times[y_0-2\delta_0,y_0+2%
\delta_0].
\end{split}%
\end{equation*}
Then, 
\begin{equation}  \label{DVt}
Var(f,[x^\prime-\delta_0,x^\prime+\delta_0]\times[y^\prime-\delta_0,y^%
\prime+\delta_0])<\varepsilon.
\end{equation}

Let $0<\delta_1<\delta_0$ and $(x^\prime,y^\prime)\in(x_0-\delta_0,x_0+%
\delta_0)\times(y_0-\delta_0,y_0+\delta_0)$. From (\ref{eq2}), it follows that 
\begin{equation}
\begin{split}
\sum^{\infty}_{i=n+1}\sum^{\infty}_{j=m+1}M_{i,j}(f;x^\prime,y^\prime)\leq&%
\text{ } \frac{1}{\pi^2}\left(\text{ }\iint\limits_{\substack{ |t_1|\leq
\delta_1 \\ |t_2|\leq \delta_1}}+\iint\limits_{\substack{ |t_1|\geq \delta_1
\\ |t_2|\leq \delta_1}}+\iint\limits_{\substack{ |t_1|\leq \delta_1 \\ %
|t_2|\geq \delta_1}}+\iint\limits_{\substack{ |t_1|\geq \delta_1 \\ %
|t_2|\geq \delta_1}}\right) \\
&\left(\sum^{\infty}_{i=n+1}\max_{u\in [u_{i-1},u_i]}\left|\int^u_{u_{i-1}}%
\frac{sin(t_1\tau)}{\tau}d\tau\right|\right) \\
&\times\left(\sum^{\infty}_{j=m+1}\max_{v\in
[v_{j-1},v_j]}\left|\int^v_{v_{j-1}}\frac{sin(t_2\tau)}{\tau}%
d\tau\right|\right)d\mu_{x^\prime,y^\prime}(t_1,t_2) \\
=:& I_1+I_2+I_3+I_4.  \label{eq4}
\end{split}%
\end{equation}
By (\ref{ED1}), (\ref{DVt}) and Lemma \ref{LM1}, 
\begin{equation}
\begin{split}
I_1\leq &\frac{(3A_1+4)(3A_2+4)}{\pi^2}\iint\limits_{\substack{ |t_1|\leq
\delta_1 \\ |t_2|\leq \delta_1}}d\mu_{x^\prime,y^\prime}(t_1,t_2) \\
\leq& \frac{(3A_1+4)(3A_2+4)}{\pi^2}\mu_{x^\prime,y^\prime}([-\delta_1,%
\delta_1]\times[-\delta_1,\delta_1]) \\
\leq& \frac{(3A_1+4)(3A_2+4)}{\pi^2}\mu_{x^\prime,y^\prime}((-\delta_0,%
\delta_0)\times(-\delta_0,\delta_0)) \\
\leq& \frac{(3A_1+4)(3A_2+4)}{\pi^2}Var(f,[x^\prime-\delta_0,x^\prime+%
\delta_0]\times[y^\prime-\delta_0,y^\prime+\delta_0]) \\
\leq& \frac{(3A_1+4)(3A_2+4)}{\pi^2}\varepsilon.  \label{eq2t5}
\end{split}%
\end{equation}

From Lemmas \ref{LM1}, \ref{LM2} and inequality in (\ref{ED2}),  
\begin{equation}
\begin{split}
I_2\leq &\frac{(3A_2+4)}{\pi^2}\iint\limits_{\substack{ |t_1|\geq \delta_1
\\ |t_2|\leq \delta_1}}\frac{3A_1}{|t_1|u_n}d\mu_{x^\prime,y^\prime}(t_1,t_2)
\\
\leq& \frac{3A_1(3A_2+4)}{\delta_1u_n\pi^2}Var(f,\mathbb{R}^2).
\label{eq2t6}
\end{split}%
\end{equation}

Using similar arguments, we conclude that 
\begin{equation}
\begin{split}
I_3\leq &\frac{(3A_1+4)}{\pi^2}\iint\limits_{\substack{ |t_1|\leq \delta_1
\\ |t_2|\geq \delta_1}}\frac{3A_2}{|t_2|v_m}d\mu_{x^\prime,y^\prime}(t_1,t_2)
\\
\leq& \frac{3A_2(3A_1+4)}{\delta_1v_m\pi^2}Var(f,\mathbb{R}^2).
\label{eq2t7}
\end{split}%
\end{equation}

Applying Lemma \ref{LM2} it follows that 
\begin{equation}
\begin{split}
I_4\leq &\frac{1}{\pi^2}\iint\limits_{\substack{ |t_1|\geq \delta_1 \\ %
|t_2|\geq \delta_1}}\left(\frac{3A_1}{|t_1|u_n}\right)\left(\frac{3A_2}{%
|t_2|v_m}\right)d\mu_{x^\prime,y^\prime}(t_1,t_2) \\
\leq& \frac{9A_1A_2}{\delta^2_1u_nv_m\pi^2}Var(f,\mathbb{R}^2).
\label{eq2t8}
\end{split}%
\end{equation}

Adding the expressions in (\ref{eq2t5}), (\ref{eq2t6}), (\ref{eq2t7}) and (%
\ref{eq2t8}), we have that
\begin{equation*}
\begin{split}
\sum^{\infty}_{i=n+1}\sum^{\infty}_{j=m+1}M_{i,j}(f;x^\prime,y^\prime)\leq& 
\frac{(3A_1+4)(3A_2+4)}{\pi^2} \\
&\times\left(\varepsilon+\left(\frac{1}{\delta_1u_n}+\frac{1}{\delta_1u_m}+%
\frac{1}{\delta^2_1u_nv_m}\right)Var(f,\mathbb{R}^2)\right),
\end{split}%
\end{equation*}
for each $(x^\prime,y^\prime)\in(x_0-\delta_0,x_0+\delta_0)\times(y_0-%
\delta_0,y_0+\delta_0)$ and $n,m\in\mathbb{N}$.\newline

Considering (\ref{eq4}) and $u_n,v_m\rightarrow\infty$, as $n,m\rightarrow\infty$,
respectively, and given $\varepsilon>0$, there exist $N,M\in\mathbb{N}$ such
that $n\geq N$ and $m\geq M$, then 
\begin{equation*}
\begin{split}
\sum^{\infty}_{i=n+1}\sum^{\infty}_{j=m+1}M_{i,j}(f;x^\prime,y^\prime)\leq& 
\frac{(3A_1+4)(3A_2+4)}{\pi^2}4\varepsilon,
\end{split}%
\end{equation*}
for each $(x^\prime,y^\prime)\in(x_0-\delta_0,x_0+\delta_0)\times(y_0-%
\delta_0,y_0+\delta_0)$. This prove the theorem.
\end{proof}

\section{Concluding remark}
It remains to do an exhaustive analysis of the truncated double Fourier integral of functions $f\in BV_{||0||}(\mathbb{R}^2)$ over regions contemplating the coordinate axes. We conjecture that the limit (\ref{TIP}) is locally uniform at every point of continuity of $f\in BV_{||0||}(\mathbb{R}^2)$. 

\section*{Acknowledgment}
 The authors express their sincere gratitude to the referees for their valuable comments and suggestions which help improve this paper.
 
\section*{Funding}
 This research was supported partially by CONAHCyT-SNI, Mexico.

\end{document}